\documentclass[final]{amsart}
\usepackage{amsmath}
\usepackage{amssymb}
\usepackage{amsthm}
\usepackage{amscd}
\usepackage{mathtools}
\usepackage{marginnote}
\usepackage{todonotes}
\usepackage{bbm}
\usepackage{bm}
\usepackage[notcite,notref]{showkeys}
\usepackage[utf8]{inputenc}
\usepackage{tikz-cd}
\usepackage{cite}
\usepackage[colorlinks=true,linkcolor=black,citecolor=black,urlcolor=blue]{hyperref} 
\usepackage{cleveref}
\newtheorem{theorem}{Theorem}[section]
\newtheorem*{theorem*}{Theorem}

\newtheorem*{conjecture*}{Conjecture}

\newtheorem*{question*}{Question}

\newtheorem*{guess*}{Guess}

\newtheorem*{problem*}{Problem}

\newtheorem{lemma}[theorem]{Lemma}
\newtheorem*{lemma*}{Lemma}

\newtheorem*{exercise*}{Exercise}
\newtheorem{proposition}[theorem]{Proposition}
\newtheorem*{proposition*}{Proposition}

\newtheorem{corollary}[theorem]{Corollary}
\newtheorem*{corollary*}{Corollary}

\theoremstyle{definition}

\newtheorem{definition}[theorem]{Definition}
\newtheorem*{definition*}{Definition}
\newtheorem{remark}[theorem]{Remark}

\newtheorem*{example*}{Example}
\newtheorem*{examples*}{Examples}

\newcommand{\up}[1]{^{(#1)}}
\newcommand{\pres}[1]{\left\langle #1 \right\rangle}

\renewcommand{\bar}{\overline}
\usepackage{mathtools}


\renewcommand{\AA}{\mathbb{A}}

\newcommand{\CC}{\mathbb{C}}

\newcommand{\FF}{\mathbb{F}}
\newcommand{\GG}{\mathbb{G}}

\newcommand{\NN}{\mathbb{N}}

\newcommand{\QQ}{\mathbb{Q}}

\newcommand{\TT}{\mathbb{T}}

\newcommand{\ZZ}{\mathbb{Z}}


\newcommand{\Cc}{\mathcal{C}}

\newcommand{\Ec}{\mathcal{E}}

\newcommand{\Oc}{\mathcal{O}}
\newcommand{\Pc}{\mathcal{P}}

\newcommand{\Sc}{\mathcal{S}}

\newcommand{\Zc}{\mathcal{Z}}


\newcommand{\mf}{\mathfrak{m}}

\newcommand{\Xf}{\mathfrak{X}}


\newcommand{\rarrow}{\rightarrow}

\newcommand{\onto}{\twoheadrightarrow}
\newcommand{\into}{\hookrightarrow}
\newcommand{\isomto}{\xrightarrow{\sim}}


\newcommand{\normal}{\lhd}

\newcommand{\rhobar}{\bar{\rho}}
\newcommand{\w}{\wedge}

\newcommand{\cyc}{\operatorname{cyc}}
\newcommand{\red}{\operatorname{red}}

\newcommand{\End}{\operatorname{End}}
\newcommand{\Hom}{\operatorname{Hom}}

\newcommand{\rec}{\operatorname{rec}}
\newcommand{\Gal}{\operatorname{Gal}}

\newcommand{\Ind}{\operatorname{Ind}}

\newcommand{\Spec}{\operatorname{Spec}}
\newcommand{\Spf}{\operatorname{Spf}}

\newcommand{\rank}{\operatorname{rank}}

\newcommand{\ch}{\operatorname{char}}

\newcommand{\ad}{\operatorname{ad}}

\newcommand{\diag}{\operatorname{diag}}

\newcommand{\id}{\mathrm{id}}


\renewcommand{\ch}{\mathrm{ch}}
\newcommand{\FS}{\mathcal{FS}}
\newcommand{\sbar}{\bar{s}}

\newcommand{\BqG}{B_{q,\hat{G}}}
\newcommand{\BqL}{B_{q,\hat{L}}}
\newcommand{\Ghat}{\hat{G}}
\newcommand{\Lhat}{\hat{L}}
\newcommand{\rk}{\operatorname{rk}}
\newtheorem*{theoremA}{Theorem A}
\newtheorem*{theoremB}{Theorem B}

\title{Generic local deformation rings when $l \neq p$.}    \author{Jack
    Shotton}

\begin{document}
\maketitle
\begin{abstract}We determine the local deformation rings of sufficiently generic mod $l$ representations of the Galois
  group of a $p$-adic field, when $l \neq p$, relating them to the space of $q$-power-stable semisimple conjugacy
  classes in the dual group.  As a consequence we give a local proof of the $l \neq p$ Breuil--M\'{e}zard conjecture of
  the author, in the tame case.
  \end{abstract}


\section{Introduction}
\label{sec:introduction}

We study the moduli space $\Xf$ of $n$-dimensional $l$-adic representations of the tame Weil group of a $p$-adic field
$F$, when $l \neq p$ are primes and $n \geq 1$ is an integer.  The main geometric result, Theorem~\ref{thm:def-rings},
is a simple description of the completion of $\Xf$ at a sufficiently general point of its special fibre.  We then apply
this to give a purely local proof of the author's $l \neq p$ analogue of the Breuil--M\'{e}zard conjecture in the tame
case --- see Theorem~\ref{thm:tame-BM}.  This was formulated, and proved for $l \geq 2$ by global automorphic methods,
in \cite{shotton-gln}.  This result links congruences between representations of $GL_n(k)$, where $k$ is the residue
field of $F$, and `congruences' between irreducible components of $\Xf$; for more background and motivation, see the
introduction to~\cite{shotton-gln}.

We give a more precise description of our results and methods in the most critical case.  Let $W_t$ be the tame Weil
group and $I_t$ be the tame inertia group of $F$, and let $(\Oc, E, \FF)$ be a sufficiently large $l$-adic coefficient
system.  Let $q$ be the order of $k$, the residue field of $F$.  Suppose that $\rhobar : W_t \rarrow GL_n(\FF)$ is a
representation such that $\rhobar(g)$ is regular unipotent for any topological generator $g$ of $I_t$.

Let $\hat{T}$ be a maximal split torus in $GL_{n, \Oc}$ and let $W$ be the Weyl group.
We have a `characteristic polynomial' map
\[\ch : GL_{n, \Oc} \rarrow \hat{T}/W.\] We consider the $q$-fixed
subscheme of $\hat{T}/W$, which we denote by
\[(\hat{T}/W)^q,\]
and its localisation at the point $\bar{e}$ of its special fibre corresponding to the identity in $\hat{T}(\FF)$.
\begin{theoremA}[Theorem~\ref{thm:regular-unipotent}] The morphism 
  \[\Xf^\wedge_{\rhobar} \rarrow (\hat{T}/W)^q_{\bar{e}}\]
  defined by $\rho \mapsto \ch(\rho(\sigma))$ is formally smooth, where $\Xf^\wedge_{\rhobar}$ is the completion of
  $\Xf$ at the point corresponding to $\rhobar$.
\end{theoremA}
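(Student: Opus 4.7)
The plan is to verify formal smoothness by the infinitesimal lifting criterion. Fix a small extension $A \onto A/I$ of Artinian local $\Oc$-algebras (with square-zero kernel $I$ and residue field $\FF$), a deformation $\rho_0 \in \Xf^\wedge_{\rhobar}(A/I)$, and a lift $P \in (\hat{T}/W)^q_{\bar{e}}(A)$ of $\ch(\rho_0(\sigma))$; the task is to produce $\rho \in \Xf^\wedge_{\rhobar}(A)$ restricting to $\rho_0$ with $\ch(\rho(\sigma)) = P$. Using the standard presentation $W_t = \pres{\sigma, \phi \mid \phi\sigma\phi^{-1} = \sigma^q}$, specifying $\rho$ is the same as specifying a pair $(\Sigma,\Phi) \in GL_n(A)^2$ with $\Phi \Sigma \Phi^{-1} = \Sigma^q$ reducing to $(\rho_0(\sigma),\rho_0(\phi))$. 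This splits the problem into (a) lifting $\Sigma_0 \defeq \rho_0(\sigma)$ to some $\Sigma \in GL_n(A)$ with $\ch(\Sigma) = P$, and then (b) lifting $\Phi_0 \defeq \rho_0(\phi)$ to a $\Phi$ satisfying the braiding relation for this $\Sigma$.

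For step (a), the hypothesis that $\bar{\Sigma} = \rhobar(\sigma)$ is regular unipotent places $\Sigma_0$ in the open regular locus $GL_n^{\mathrm{reg}}$. I would then invoke the classical smoothness of $\ch \colon GL_n^{\mathrm{reg}} \to \hat{T}/W$ (exhibited, for instance, via a Steinberg cross-section through any regular point) to lift $\Sigma_0$ to an $A$-point $\Sigma$ of $GL_n^{\mathrm{reg}}$ with $\ch(\Sigma) = P$.

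For step (b), the condition $P \in (\hat{T}/W)^q(A)$ is precisely $\ch(\Sigma) = \ch(\Sigma^q)$ in $\hat{T}/W(A)$. Since $q$ is invertible in $\FF$ (as $l \neq p$), $\bar{\Sigma}^q$ is again regular unipotent, so by openness $\Sigma^q$ lies in $GL_n^{\mathrm{reg}}(A)$ as well. Consequently the functor $R \mapsto \{g \in GL_n(R) : g \Sigma g^{-1} = \Sigma^q\}$ on $A$-algebras is a torsor $T$ under the centralizer group scheme $C_{GL_n}(\Sigma)$; this centralizer is smooth over $A$, being the unit group of the finite flat commutative $A$-algebra $A[\Sigma] \cong A[X]/(\ch_\Sigma(X))$, where the isomorphism follows from $\bar{\Sigma}$ being cyclic together with Nakayama's lemma. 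Since $T$ has the $\FF$-point $\rhobar(\phi)$, it is non-empty and smooth, so the reduction $T(A) \to T(A/I)$ is surjective; I lift $\Phi_0 \in T(A/I)$ to the required $\Phi \in T(A)$.

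The real content is the observation that the $q$-fixed condition on $\ch(\Sigma)$ is exactly what is needed to produce an $A$-conjugator between $\Sigma$ and $\Sigma^q$ in families, once $\Sigma$ is regular; everything else is bookkeeping around two classical inputs (smoothness of the Chevalley map on $GL_n^{\mathrm{reg}}$, and smoothness of the centralizer of a regular element). The main effort in writing up will be checking that both inputs remain valid over an Artinian base $A$ rather than only over a field, but in $GL_n$ both can be made very explicit through the cyclic-matrix description, so no substantive obstacle is expected.
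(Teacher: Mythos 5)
Your proposal is correct, but it takes a genuinely different route from the paper's. The paper does not verify the infinitesimal criterion directly over $(\hat{T}/W)^q_{\bar{e}}$; instead it first base changes along the finite flat cover $Z = \hat{T}_{\bar{e}} \times_{(\hat{T}/W)_{\bar{e}}} (\hat{T}/W)^q_{\bar{e}}$ that orders the eigenvalues $(a_1,\ldots,a_n)$, exhibits the pulled-back deformation space explicitly as a product $Y \times \Pc$ with $Y$ a normal-form locus for $\rho(\sigma)$ (diagonal $a_i$, superdiagonal $1$'s), shows $Y \cong Z \times (\AA^n)^\wedge$ by parametrising $\rho(\phi)$ by its value on $e_1$ and recovering the rest inductively from the braid relation, and finally descends formal smoothness along the finite flat map $Z \rarrow (\hat{T}/W)^q_{\bar{e}}$. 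Your two steps correspond to the paper's two claims — lifting $\Sigma$ with prescribed characteristic polynomial is the content of Claim 1 (via the cyclic-vector change of basis, which is exactly an explicit local trivialisation of $\ch$ on the regular locus), and lifting $\Phi$ is Claim 2 — and both arguments ultimately rest on the same algebra: regularity of $\rhobar(\sigma)$ makes $A^n$ a cyclic $A[\Sigma]$-module, Cayley--Hamilton applies, and the $q$-fixed condition forces $\ch_\Sigma = \ch_{\Sigma^q}$. Your version is shorter and more conceptual, at the cost of invoking (and having to verify over an artinian base) the smoothness of the Chevalley map on $GL_n^{\mathrm{reg}}$ and of regular centralizers; the paper's version is elementary matrix algebra and yields an explicit presentation of the deformation ring as a byproduct. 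One point to tighten in the write-up: the transporter $T$ is a (trivial) torsor under $C_{GL_n}(\Sigma)$ because $\Sigma$ and $\Sigma^q$ are conjugate over $A$ itself — both are cyclic with the same characteristic polynomial, hence both module structures on $A^n$ are isomorphic to $A[X]/(\ch_\Sigma(X))$ — and smoothness of $T$ over $A$ follows from this; the existence of the $\FF$-point $\rhobar(\phi)$ alone would only show that the pseudo-torsor $T$ meets the special fibre, which does not by itself give flatness of $T$ over $A$ or the surjectivity of $T(A) \rarrow T(A/I)$.
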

Note that the completion $\Xf^\wedge_{\rhobar}$ is simply the framed deformation ring of $\rhobar$.  The proof of
Theorem~A is an elaboration of the proof of Proposition~7.10 in \cite{shotton-gln}.

More generally, to each irreducible component $\mathcal{C}$ of the special fibre of $\Xf$ we associate a Levi subgroup
$\hat{M} \subset GL_{n, \Oc}$ containing $\hat{T}$, with Weyl group $W_{\hat{M}} \subset W$, and an $\FF$-point
$\bar{s}$ of $(\hat{T}/W_{\hat{M}})^q$.  For sufficiently general points $\rhobar$ on $C$ we construct a morphism
\[\Xf^\wedge_{\rhobar} \rarrow (\hat{T}/W_{\hat{M}})^q_{\bar{s}}\]
and show that it is formally smooth.  See Theorem~\ref{thm:def-rings}.  The proof proceeds by reducing first to the case
that $\rhobar(g)$ is unipotent for all $g\in I_t$ (see Section~\ref{sec:unipotent-reduction}), and then to the situation
of Theorem~A (see Corollary~\ref{cor:reduce-to-M}).

    We explain the application to the $l \neq p$ ``Breuil--M\'{e}zard conjecture'' of \cite{shotton-gln} in the `tame case',
    whose statement we briefly recall.  Set $G = GL_{n, k}$.  Let $\Zc(\Xf)$ (resp. $\Zc(\Xf_\FF)$) be the free abelian group on
    the irreducible components of $\Xf$ (resp. $\Xf_{\FF}$).  Let $K_E(G(k))$ (resp. $K_\FF(G(k))$) be the Grothendieck
    groups of representation of $G(k)$ over $E$ (resp. $\FF$).  There is a `cycle map'
  \[\cyc : K_E(G(k)) \rarrow \Zc(\Xf)\]
  (see Section~\ref{sec:combining}) motivated by the local Langlands correspondence, and natural `reduction maps'
  $\red : K_E(G(k)) \rarrow K_\FF(G(k))$ and $\red : \Zc(\Xf) \rarrow \Zc(\Xf_{\FF})$.  We then have
  \begin{theoremB}[Theorem~\ref{thm:tame-BM}] There is a unique map $\bar{\cyc} : K_\FF(G(k)) \rarrow \Zc(\Xf_\FF)$ such that the diagram
\[\begin{CD} K_E(G(k)) @>{\cyc}>> \Zc(\Xf) \\
    @V{\red}VV  @V{\red}VV \\
    K_{\FF}(G(k)) @>{\bar{\cyc}}>> \Zc(\Xf_{\FF})
  \end{CD} \]
commutes.
\end{theoremB}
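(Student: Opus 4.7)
The plan is to prove uniqueness using surjectivity of the decomposition map, and then to deduce existence by computing multiplicities in the completed local rings of $\Xf$ via Theorem~\ref{thm:def-rings}.

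For uniqueness I would appeal to the classical fact that the decomposition map $\red : K_E(G(k)) \rarrow K_\FF(G(k))$ is surjective (a consequence, for any finite group, of Brauer's induction theorem). Given this, commutativity of the square forces any candidate $\bar{\cyc}$ to satisfy $\bar{\cyc}(\red(y)) = \red(\cyc(y))$, which determines it uniquely.

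For existence it suffices, for each irreducible component $\bar\Cc \subset \Xf_\FF$, to show that the linear functional $V \mapsto \mathrm{mult}_{\bar\Cc}(\red(\cyc(V)))$ on $K_E(G(k))$ factors through $\red$. I would compute this multiplicity at a sufficiently general $\FF$-point $\bar\rho$ of $\bar\Cc$: it is an intersection multiplicity in the completed local ring $\Xf^\wedge_{\bar\rho}$, which by Theorem~\ref{thm:def-rings} is formally smooth over $R \defeq (\hat T/W_{\hat M})^q_{\bar s}$ for Levi data $(\hat M, \bar s)$ attached to $\bar\Cc$. The intersection multiplicities therefore pull back from cycles on the much simpler scheme $R$.

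The core computation then recognises the cycle map in these local coordinates. Via tame local Langlands, the $E$-irreducible representations $\pi$ of $G(k)$ whose cycles meet $\bar\Cc$ should correspond to $q$-power-stable semisimple classes in $\hat T/W_{\hat M}$ lifting $\bar s$, and the pullback of $\cyc(\pi)$ to $R$ ought to be described by the characteristic polynomial of the associated class with an explicit multiplicity. Reduction of such a cycle from the generic fibre of $(\hat T/W_{\hat M})^q$ to its special fibre manifestly depends only on the reduction mod $l$ of the characteristic polynomial, hence only on the Brauer character of $\pi$; summing over the blocks of $G(k)$ yields the desired factorisation. The main obstacle I foresee is precisely this dictionary — matching $\cyc$ (defined via local Langlands on the $G(k)$ side) with the formally smooth map of Theorem~\ref{thm:def-rings}, and in particular identifying Harish-Chandra induction for $GL_n(k)$ with pullback along $\hat T/W_{\hat M} \rarrow \hat T/W$ on the Galois side. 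Once this compatibility is in place, the remaining assertion is an elementary statement about specialisation of cycles on $(\hat T/W_{\hat M})^q$.
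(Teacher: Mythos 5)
Your geometric reduction is exactly the paper's: uniqueness from surjectivity of the decomposition map, and existence checked component by component by completing at a well-chosen (in the paper, $f$-distinguished) point and invoking Theorem~\ref{thm:def-rings} to transport everything to $S^{\hat M}_{\bar s} = \Spec B_{q,\hat M,\bar s}$, a finite flat local $\Oc$-algebra whose cycle-reduction map is just $\sum a_{[s]}[s] \mapsto \sum a_{[s]}$. Up to that point the proposal is sound.

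The gap is in the final step, which you describe as ``manifest'' and ``an elementary statement about specialisation of cycles on $(\hat T/W_{\hat M})^q$.'' After the geometric reduction, what must be shown is that the integer
\[ \sigma \;\longmapsto\; \sum_{[s]} m(\sigma, \tau_{[s]}), \qquad m(\sigma,\tau) = \dim \Hom_{G(k)}(\pi_G(\tau),\sigma), \]
summed over the $E$-points $[s]$ of $S^{\hat M}_{\bar s}$ lifting $\bar s$, depends only on $\red(\sigma) \in K_\FF(G(k))$. This is not a statement about specialisation of cycles at all: the geometry contributes nothing beyond the identification of the components through $\rhobar$ with the points $[s]$, and the entire content lives in the multiplicities $m(\sigma,\tau_{[s]})$, which are multiplicities of $\sigma$ in induced generalized Steinberg representations. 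That this sum is a ``Brauer-character invariant'' is a genuine modular-representation-theoretic fact, and your proposal gives no mechanism for it --- reduction mod $l$ of a characteristic polynomial tells you which $[s]$ lift $\bar s$, but says nothing about how the individual multiplicities recombine.

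The paper's mechanism is the Gelfand--Graev representation. One sets $\Theta = \Ind_{L(k)}^{G(k)} e_{\bar s}\Gamma_L$, where $\Gamma_L$ is the integral Gelfand--Graev representation (projective over $\Oc[G(k)]$ because $l \nmid |U(k)|$) and $e_{\bar s}$ is the central idempotent supplied by the Brou\'{e}--Michel theorem, which guarantees that the union of the relevant Deligne--Lusztig series is a union of $l$-blocks, so that $e_{\bar s}$ exists in $\Oc[G(k)]$ and $\Theta$ is again projective. The Curtis homomorphisms (Proposition~\ref{prop:curtis}) identify $\sum_{[s]} m(\sigma,\tau_{[s]})$ with $\dim_E \Hom_{G(k)}(\Theta_E,\sigma)$, and projectivity of $\Theta$ then gives
\[ \dim_E \Hom(\Theta_E,\sigma) = \rank_\Oc \Hom(\Theta,\sigma^\circ) = \dim_\FF \Hom(\Theta_\FF, \bar\sigma), \]
which visibly factors through $K_\FF(G(k))$ and hence defines $\bar\cyc$. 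Without this (or an equivalent device --- the author mentions a combinatorial induction as an alternative), the key invariance claim is unproved, so as it stands the proposal does not close.
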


It is enough to prove Theorem~B after formally completing at some $\rhobar$ on each component.  We explain how to do
this for $\rhobar$ as in Theorem~A.  Let $\Gamma$ be the (integral) Gelfand--Graev representation of $G(k)$ over
$\Oc$ --- it is a \emph{projective} $\Oc[G(k)]$ representation.  Let $B_{q,n}$ be the coordinate ring of
$(\hat{T}/W)^q$.  Via the `Curtis homomorphisms' we define a
homomorphism
\begin{equation}\label{eq:curtis}B_{q,n} \rarrow \End(\Gamma)\otimes E\end{equation}
which restricts to a homomorphism
\[B_{q,n,\bar{e}} \rarrow \End(e\Gamma)\otimes E\] for a certain idempotent $e \in \Oc[G(k)]$.  (For this, we need a
result of Brou\'{e}--Michel in \cite{MR983059} on the blocks of $\Oc$-representations of $G(k)$).  The special fibre of
$\Xf_{\rhobar}^\wedge$ has a unique irreducible component $\Cc$ and we may define
\[\bar{\cyc}(\sigma) = \dim_{\FF} \Hom(\Gamma, \sigma)[\Cc].\]
That this works is essentially a consequence of the projectivity of $\Gamma$, together with Theorem~A.

The proof of Theorem~B is carried out in Sections~\ref{sec:rep-gln} and~\ref{sec:combining} --- in
Section~\ref{sec:rep-gln} we recall the necessary material on Gelfand--Graev and Deligne--Lusztig representations, and
this is applied to Theorem~B in Section~\ref{sec:combining}.

The functor
$\Hom(\Gamma, \cdot)$ plays the role in this proof that the functor $M_\infty(\cdot)$ plays in the global proof via
patching, and so one could see the relationship between this article and \cite{shotton-gln} as being parallel to that
between \cite{MR3306557} and \cite{Kisin2009-FontaineMazur}.

Helm and Moss have proved in \cite{helm2016curtis} and \cite{1610.03277} that the local Langlands correspondence in
families, conjectured in \cite{1104.0321}, exists.  As a consequence, or byproduct, of their proof, it follows that the
map (\ref{eq:curtis}) actually defines an isomorphism
\[B_{q,n} \isomto \End(\Gamma).\] This is a result purely in the representation theory of finite groups, and it would be
interesting to have an elementary proof.  For general connected reductive groups, results on the endomorphism rings of
integral Gelfand--Graev representations were obtained by Bonnaf\'{e} and Kessar in \cite{MR2441999}, under the
assumption that $l$ does not divide the order of the Weyl group (and is distinct from $p$).

The idea of using the Gelfand--Graev representations came from \cite{helm2016curtis}.  Having proved Theorem~A, I asked
David Helm whether the map~(\ref{eq:curtis}) could be an isomorphism and our correspondence turned up an error in an
earlier version of \cite{helm2016curtis}, which was corrected by him using, among other things, the
map~(\ref{eq:curtis}) and the idea behind the proof of Theorem~A.  He was then able to show that the
map~(\ref{eq:curtis}) was indeed an isomorphism, as a consequence of his work with Moss.  There are other ways to deduce
Theorem~B from Theorem~A; my original method was a complicated combinatorial induction.

We take some care to write things in a way that is independent of a choice of topological generator of $I_t$. Thus
instead of $(\hat{T}/W)^q$ we actually use the space of $q$-stable $W$-orbits of homomorphisms $I_t \rarrow \hat{T}$.
Points of this space over $E$ then canonically parametrise Deligne--Lusztig representations of $GL_n(k)$ over $E$, a
construction we learned from \cite{MR2480618}.


\subsection{Acknowledgments}

Parts of this work were conducted at the University of Chicago and at the Max Planck Institute for Mathematics, and I
am grateful both institutions for their support.  I thank Andrea Dotto and David Helm for helpful conversations and
correspondence.

\subsection{Notation}
\label{sec:notation}

An $l$-adic coefficient system is a triple $(E, \Oc, \FF)$ where: $E$ is a finite extension of $\QQ_l$, $\Oc$ is its
ring of integers, and $\FF$ is its residue field.  We then define $\Cc_{\Oc}$ to be the category of artinian local
$\Oc$-algebras with residue field $\FF$, and $\Cc^\wedge_{\Oc}$ be the category of complete artinian local
$\Oc$-algebras that are inverse limits of objects of $\Cc_{\Oc}$.  We also consider affine formal schemes of the form
$\Spf(R)$ for $R$ an object of $\Cc_{\Oc}$ or $\Cc_{\Oc}^\w$ (taken with respect to the $\mf_R$-adic topology); these
form categories which we denote by $\FS_{\Oc}$ or $\FS_{\Oc}^\w$ respectively (and which are canonically isomorphic to
the opposite categories of $\Cc_{\Oc}$ and $\Cc_{\Oc}^\w$).  For $X \in \FS_{\Oc}^\w$ and $A \in \Cc_{\Oc}^\w$ we write
$X(A) = \Hom_{\FS_{\Oc}^\w}(X, \Spf(A))$.  If $X/\Oc$ is a scheme locally of finite type, and $x \in X(\FF)$, then we
let $X^\w_x = \Spf\left(\varprojlim \Oc_{X, x}/\mf_{X,x}^n\right)$ be its formal completion, an object of
$\FS_{\Oc}^\w$.

If $A$ is a ring, we write $\diag(x_1, \ldots, x_n)$ for the diagonal matrix with entries $x_1, \ldots, x_n$.  If $\zeta
\in A$ and $n\in \NN$, then we write $J_n(\zeta)$ for the $n \times n$ Jordan block matrix with $\zeta$ on the diagonal
and $1$ on the superdiagonal.

\section{Moduli of Weil group representations}
\label{sec:parameters}

\subsection{Galois groups}
\label{sec:galois-groups}

Choose a maximal tamely ramified extension $F^{t}$ of $F$.  This induces an algebraic closure $\bar{k}$
of $k$.  For $n \in \NN$, let $k_n$ be the subextension of $\bar{k}/k$ having degree $n$ over $k$. Let
$G_t = \Gal(F^{t}/F)$.  The canonical homomorphism $G_t \rarrow G_k = \Gal(\bar{k}/k) \cong \hat{\ZZ}$ has kernel the tame
inertia subgroup $I_t$, and the Weil group $W_t \subset G_t$ is the preimage of $\ZZ$ under this homomorphism.

There is a canonical isomorphism
\[\omega : I_t \isomto \varprojlim k_n^\times\] where the inverse limit is under the norm maps $k_n \rarrow k_m$ for
$m \mid n$.  The exact sequence \[1 \rarrow I_t \rarrow G_t \rarrow G_k \rarrow 1\] splits, so that we have a canonical isomorphism
\[G_t \cong (\varprojlim k_n^\times) \rtimes G_k\] where $G_k$ acts on each $k_n^\times$ in the natural way.  More
concretely, if we choose a topological generator $\sigma \in I_t$ and lift $\phi \in G_t$ of arithmetic Frobenius, then
$G_t$ is isomorphic to the profinite completion of
\[\left\langle \phi, \sigma | \phi \sigma \phi^{-1} = \sigma^q\right\rangle.\] Note that, as a
topological group, this only depends on the integer $q$.  A pair $(\sigma, \phi)$
as above will be called (a choice of) \emph{standard (topological) generators} of $G_t$ (or $W_t$).

\subsection{Moduli spaces}
\label{sec:moduli}

Fix an $l$-adic coefficient system $(E, \Oc, \FF)$.  Let $\hat{G}$ be an algebraic group over $\Oc$ isomorphic to a product of
finitely many general linear groups (for the proofs of all the statements below, we can and do immediately reduce to the
case of $GL_n/\Oc$, but the slight extra generality will be useful later).

\begin{proposition}\label{prop:representable}
  The functor taking an $\Oc$-algebra $A$ to the set of continuous\footnote{We topologise any $\ZZ_l$-algebra $A$ as
    the direct limit of its finitely generated $\ZZ_l$-algebra, and give $\hat{G}(A)$ its canonical topology as the
    points of an affine scheme over a topological ring, as in \cite{conrad2012weil}.} homomorphisms
  \[\rho : W_t \rarrow \hat{G}(A)\]
  is representable by an affine scheme $\Xf^{\hat G}(q)$ of finite type over $\Oc$ that is reduced, $\Oc$-flat, and
  a local complete intersection of dimension $\dim_\Oc({\hat G}) + 1$.
\end{proposition}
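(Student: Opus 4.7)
The plan is to reduce to $\hat{G} = GL_{n,\Oc}$ (which is immediate because the functor factors over products in $\hat G$) and then to realise $\Xf^{GL_n}(q)$ as the explicit closed subscheme $Y \subset GL_{n,\Oc} \times GL_{n,\Oc}$ cut out by the $n^2$ scalar equations of the matrix identity $\Phi \Sigma = \Sigma^q \Phi$, where $\Sigma, \Phi$ denote the coordinates on the two factors and $(\sigma, \phi)$ are fixed standard generators of $W_t$.

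First I would check that the assignment $(\Sigma, \Phi) \mapsto (\sigma \mapsto \Sigma,\ \phi \mapsto \Phi)$ is a functorial bijection between $Y(A)$ and the continuous representations $W_t \to GL_n(A)$. The critical observation is that continuity is \emph{forced} by the matrix equation: over an artinian quotient $A/\mf_A^k$ the image $\bar\Sigma$ has some finite order $N$, and conjugation by $\bar\Phi$ preserves this order, so $\bar\Sigma^q$ also has order $N$, forcing $\gcd(N, q) = \gcd(N, p) = 1$. Since $\sigma$ topologically generates $I_t \cong \varprojlim k_n^\times$ and the continuous finite quotients of $I_t$ are cyclic of orders dividing some $q^n - 1$, the assignment $\sigma \mapsto \Sigma$ extends uniquely to a continuous homomorphism $I_t \to GL_n(A)$; compatibility with $\phi$-conjugation is automatic from the defining matrix relation together with density of $\langle\sigma\rangle$ in $I_t$. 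The inverse map is tautological, proving representability by $Y$.

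For the geometric properties, I would work directly with the defining equations. The ambient $GL_{n,\Oc} \times GL_{n,\Oc}$ is smooth over $\Oc$ of relative dimension $2n^2$ and hence of Krull dimension $2n^2 + 1$, so cutting by $n^2$ equations forces every component of $Y$ to have dimension at least $n^2 + 1$. For the matching upper bound I would pass to each fibre separately and stratify via the projection $\pi \colon Y \to GL_{n,\Oc}$, $(\Sigma, \Phi) \mapsto \Sigma$: over a conjugacy class $C$ with $C^q = C$, the fibre of $\pi$ is a torsor under $Z_{GL_n}(\Sigma)$, and the identity $\dim C + \dim Z_{GL_n}(\Sigma) = n^2$ gives that the corresponding piece of $Y_E$ (resp.\ $Y_\FF$) has relative dimension $n^2$. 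Summing over $q$-stable classes yields $\dim Y_E, \dim Y_\FF \leq n^2$, which combined with the lower bound gives $\dim Y = n^2 + 1$, with $Y_\FF$ of strictly smaller Krull dimension; by the Cohen--Macaulay property of the smooth ambient, the $n^2$ equations form a regular sequence locally at every point, so $Y$ is LCI. No component of $Y$ is then contained in its special fibre, and this together with LCI over the DVR $\Oc$ yields $\Oc$-flatness. Reducedness follows from LCI plus generic smoothness, which I would verify at a regular semisimple $\Sigma$ whose eigenvalues are distinct and form a single $\langle q \rangle$-orbit, where the Jacobian of the matrix equation has full rank.

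The main obstacle will be the dimension upper bound: one must verify that the conjugacy-class stratification really exhausts $Y$ on each fibre with strata of the expected relative dimension $n^2$, uniformly in both characteristics. The classical centralizer--class identity handles the dimensions of individual strata, but care is needed for the special fibre where unipotent and mixed classes must be accounted for. Once the dimension count is in hand, representability, LCI, flatness, and reducedness all drop out of the Cohen--Macaulay / regular-sequence formalism applied to the explicit equations.
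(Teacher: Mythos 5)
Your overall strategy for representability is the same as the paper's: reduce to $GL_n$, represent homomorphisms of the discrete group $\langle \sigma, \phi \mid \phi\sigma\phi^{-1} = \sigma^q\rangle$ by the explicit scheme $\{\Phi\Sigma\Phi^{-1} = \Sigma^q\} \subset GL_n \times GL_n$, and then show that continuity of the extension to $W_t$ is automatic. However, your continuity argument has a genuine gap. The claim that $\bar\Sigma$ has finite order is false: take $A = \bar E$ and $\Sigma = J_n(1)$ a unipotent Jordan block, which is conjugate to $\Sigma^q$ and has infinite order. (The framing via ``artinian quotients $A/\mf_A^k$'' also does not apply, since $A$ is an arbitrary $\Oc$-algebra, e.g.\ a field of characteristic zero or a polynomial ring.) The correct mechanism, which the paper uses, is the universal identity $(\Sigma^{q^{n!}-1}-1)^n = 0$: the eigenvalues of $\Sigma$ are permuted by the $q$-power map, hence are $(q^{n!}-1)$-th roots of unity, and Cayley--Hamilton gives the identity at characteristic-zero geometric points; since the moduli scheme is reduced, $\ZZ_l$-flat and of finite type, the identity then holds universally. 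Note the logical order: the geometric properties are established \emph{first} and are an input to the continuity argument. One then still has to verify $l$-adic continuity of $k \mapsto \Sigma^k$ for the prime-to-$p$ topology on $\ZZ$, which the paper does by showing $\Sigma^{(k-k')l^r} \to 1$ using $(\Sigma^{k-k'}-1)^n = 0$; your sketch omits this step entirely.

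On the geometric side you diverge from the paper, which simply cites \cite[Proposition 6.2]{helm2016curtis}; your direct argument (lower bound from counting equations, upper bound from the conjugacy-class stratification with $\dim C + \dim Z(\Sigma) = n^2$, then LCI from the regular-sequence criterion in a Cohen--Macaulay ambient, flatness from no component lying in the special fibre, reducedness from $R_0 + S_1$) is essentially a sketch of the proof the paper outsources, and the skeleton is sound. But the reducedness step as stated is incomplete: generic smoothness must be verified on \emph{every} irreducible component of each fibre, and the components are indexed by all $q$-stable semisimple inertial parameters, not only those where $\Sigma$ is regular semisimple with eigenvalues forming a single $q$-orbit. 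For instance the component with $\Sigma$ generically unipotent is not seen by your proposed Jacobian check, so you would need to exhibit a smooth point (or compute the tangent space at the generic point) on each component separately.
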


\begin{remark}
    Forthcoming work of Dat, Helm, Kurinczuk, and Moss will show that the analogous result holds with
    $\hat{G}$ replaced by an arbitrary unramified connected reductive group over $\ZZ_l$.
\end{remark}

\begin{proof} We may and do assume that $\hat{G} = GL_n/\Oc$ for some $n$.  Choose standard topological generators
  $\sigma$ and $\phi$ of $W_t$, and let $W'_t$ be the subgroup they generate.  As $W'_t$ is finitely generated, it is
  clear that the functor taking $A$ to the set of homomorphisms $\rho : W'_t \rarrow \hat{G}(A)$ is representable by a
  finite-type affine scheme $\Xf$ over $\ZZ_l$.  Moreover, \cite[Proposition 6.2]{helm2016curtis} implies that $\Xf$
  enjoys the geometric properties that we are claiming for $\Xf^{\hat{G}}(q)$.
  \begin{lemma} Suppose that $A$ is a $\ZZ_l$-algebra and that $M$ is a finite $A$-module, free of rank $n$, with an
    $A$-linear action $\rho$ of $W'_t$.  Then there is a unique continuous $A$-linear action $\tilde{\rho}$ of $W_t$ on
    $M$ extending that of $W'_t$.
  \end{lemma}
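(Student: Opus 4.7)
The plan is to exploit the relation $\phi\sigma\phi^{-1} = \sigma^q$ to show that $\rho(\sigma)^{q^N - 1}$ is unipotent for $N = \mathrm{lcm}(1,\ldots,n)$, and then to interpolate $l$-adically using binomial coefficients to obtain the continuous extension. First I would reduce to the Noetherian case: since $W'_t = \langle \sigma, \phi \mid \phi\sigma\phi^{-1} = \sigma^q\rangle$ is finitely generated, the matrices $S := \rho(\sigma)$ and $B := \rho(\phi)$ in $GL_n(A)$ lie in some finitely generated $\ZZ_l$-subalgebra $A_0 \subset A$, and by compatibility of the direct-limit topology on $A$ with $A_0 \hookrightarrow A$ it suffices to produce the extension over $A_0$ equipped with its $l$-adic topology.

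The key algebraic step is to show that $U := S^{q^N - 1}$ equals $I + T$ with $T \in M_n(A_0)$ nilpotent. The relation $BSB^{-1} = S^q$ forces the characteristic polynomials of $S$ and $S^q$ to coincide, so in any algebraically closed residue field of $A_0$ the eigenvalue multiset of $S$ (of size $\leq n$) is permuted by the $q$-th power map. All orbits have length dividing $N$, so every eigenvalue satisfies $\alpha^{q^N - 1} = 1$, and Cayley--Hamilton gives $(S^{q^N - 1} - I)^n = 0$ in each residue field. Therefore $(S^{q^N - 1} - I)^n \in M_n(A_0)$ has every entry in every prime of $A_0$, hence in the nilradical; Noetherianness makes the nilradical nilpotent of some order $K$, whence $(S^{q^N - 1} - I)^{nK} = 0$ in $M_n(A_0)$.

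For the construction: $(q^N - 1)$ is prime to $p$, so $\hat{\ZZ}^{(p)}/(q^N - 1) \cong \ZZ/(q^N - 1)$, and each $\alpha \in I_t \cong \hat{\ZZ}^{(p)}$ has a unique decomposition $\alpha = r + (q^N - 1)\beta$ with $r \in \{0,\ldots,q^N - 2\}$ and $\beta \in \hat{\ZZ}^{(p)}$; the projection of $\beta$ to the $\ZZ_l$-factor gives $\beta_l \in \ZZ_l$ depending continuously on $\alpha$. Define
\[\tilde\rho(\alpha) := S^r \cdot \sum_{k=0}^{\infty} \binom{\beta_l}{k} T^k,\]
a finite sum by nilpotence of $T$. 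Since integer-valued binomial coefficients extend continuously to a $\ZZ_l$-valued function on $\ZZ_l$, every $\binom{\beta_l}{k}$ lies in $\ZZ_l \subset A_0$, whence $\tilde\rho(\alpha) \in GL_n(A_0)$. One then checks that $\tilde\rho$ is a continuous group homomorphism on $I_t$ agreeing with $\rho$ on $\ZZ$, and extends to $W_t = I_t \rtimes \langle\phi\rangle$ by setting $\tilde\rho(\phi) := B$; the semidirect-product relation $B \tilde\rho(\alpha) B^{-1} = \tilde\rho(q\alpha)$ holds on $\ZZ$ by construction and then on all of $I_t$ by continuity. Uniqueness follows from density of $\ZZ$ in $\hat{\ZZ}^{(p)}$ together with Hausdorffness of $GL_n(A_0)$.

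The main obstacle is the algebraic step: lifting the residue-field unipotence of $S^{q^N - 1}$ to the honest identity $(S^{q^N-1} - I)^{nK} = 0$ over $A_0$. This uses both Cayley--Hamilton and the Noetherian fact that an ideal consisting of nilpotent elements is itself nilpotent. A secondary subtlety is that when $A_0$ is not $l$-adically complete, a naive Cauchy-sequence definition of $\tilde\rho(\alpha)$ might land only in the completion; the explicit binomial formula, with coefficients in $\ZZ_l \subset A_0$, is what keeps $\tilde\rho$ honestly in $GL_n(A_0)$.
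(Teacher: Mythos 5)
Your proposal is correct, and it rests on the same two ideas as the paper's proof: the relation $\phi\sigma\phi^{-1}=\sigma^q$ forces a power $\rho(\sigma)^{q^N-1}$ to be unipotent (up to nilpotents), and one then interpolates the integer powers of $\rho(\sigma)$ $l$-adically. The implementations of both steps differ, though, in ways worth recording. For the quasi-unipotence, the paper reduces to the universal representation on the scheme $\Xf$ representing $W'_t$-representations and uses that $\Xf$ is reduced and $\ZZ_l$-flat (imported from Helm's work) to check the identity $(\rho(\sigma)^{q^{n!}-1}-1)^n=0$ at characteristic-zero geometric points only; you instead check at every residue field of a finitely generated subalgebra $A_0$ and invoke nilpotence of the nilradical of a Noetherian ring, obtaining the weaker but entirely sufficient exponent $nK$. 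Your route is more self-contained --- it does not presuppose the geometric input on $\Xf$ --- at the cost of the auxiliary constant $K$ (and your $N=\operatorname{lcm}(1,\dots,n)$ versus the paper's $n!$ is immaterial, since orbit lengths divide both). For the interpolation, the paper shows that $k\mapsto\rho(\sigma)^k$ is continuous for the prime-to-$p$ topology on $\ZZ$ with values in the $\ZZ_l$-subalgebra $\Ec\subset\End_A(M)$ generated by $\rho(\sigma)$, which is a finite $\ZZ_l$-module and hence $l$-adically complete and separated, so the extension to $I_t$ exists and is unique by passage to the completion; you write the extension down in closed form via $\sum_k\binom{\beta_l}{k}T^k$. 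These are equivalent, and your remark that the explicit formula keeps values in $GL_n(A_0)$ even when $A_0$ is not complete is precisely the role played by $\Ec$ in the paper. One caveat on your uniqueness step: Hausdorffness of $GL_n(A_0)$ for the $l$-adic topology can fail (e.g.\ if $l$ is invertible in $A_0$), so the density argument is better run inside a separated target such as $\End(N)$ for a $\sigma$-stable finitely generated $\ZZ_l$-lattice $N$ generating $M$, as the paper does; this is a presentational repair rather than a gap in the substance of your argument.
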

  \begin{proof}
    Note first that every \emph{finite image} representation of $W'_t$ extends uniquely to a continuous representation
    of $W_t$.  Now suppose that $M$ is as in the lemma.  I claim that $(\sigma^{q^{n!} - 1} - 1)^n$ acts as zero on $A$.
    Indeed, it suffices to check that this holds for the universal representation of $W'_t$ over $\Xf$.  This in
    turn can be checked at geometric points in characteristic zero, since $\Xf$ is of finite type, $\ZZ_l$-flat,
    and reduced.  But at such points the eigenvalues of $\sigma$ are permuted by the $q$-power map, and so fixed by the
    $q^{n!}$-power map.  Thus they are all $(q^{n!}-1)$th roots of unity.  The result follows from the
    Cayley--Hamilton theorem.
 
    By the previous paragraph, the $\ZZ_l$-subalgebra $\Ec$ of $\End_A(M)$ generated by $\rho(\sigma)$ is a finitely
    generated $\ZZ_l$-module.  It follows that there is a finitely generated $\ZZ_l$-submodule $N$ of $M$ that generates
    $M$ as an $A$-module and that is preserved by $\sigma$, so that $\Ec \subset \End(N)$.  I claim that the map
    $k \mapsto \rho(\sigma)^k$ is a continuous map from $\ZZ$, equipped with the linear topology whose open ideals are
    $m\ZZ$ for $m$ coprime to $p$, to $\End(N)$.  If $k \equiv k' \mod q^{n!} - 1$, then by the previous paragraph
    $(\rho(\sigma)^{k - k'} - 1)^n = 0$.  It follows that, for every $s \in \NN$, there exists $r \in \NN$ such that
    $\rho(\sigma)^{k - k'} \equiv 1$ in $\End(N/l^sN)$ for all $k \equiv k' \mod (q^{n!} - 1)l^r$.  This is the
    required continuity.  We deduce that $\rho$ extends to a unique continuous map from the completion of
    $\langle \sigma \rangle$ with respect to this topology to $\Ec \subset \End(N)$.  This completion is canonically isomorphic to
    $I_t$, and we therefore obtain a continuous homomorphism $I_t \rarrow \Ec \subset \End(M)$.  It follows from the
    unicity that this extends to a continuous homomorphism $W'_t \rarrow \End(M)$.
\end{proof}
The proposition follows immediately, with $\Xf^{\hat G}(q) = \Xf$.
\end{proof}
\begin{remark}
The reason for formulating Proposition~\ref{prop:representable} with $W_t$ rather than the subgroup $W'_t$ used in the
proof is to get a moduli space whose definition does not require a choice of $\sigma$.
\end{remark}

\subsection{Parameters}
\label{sec:inertial}

Let $C$ be a field containing $\FF$ or $E$, and let ${\hat G}$ be as above.  In the following, we will usually omit the word
`tame', since that is the only case we consider in this article.

\begin{definition}
  A (tame) \emph{${\hat G}$-parameter over $C$} is a $\hat G(C)$-conjugacy class of homomorphisms $\rho : W_t \rarrow {\hat G}(C)$.

  A homomorphism $\tau : I_t \rarrow {\hat G}(C)$ is \emph{extendable} if it extends to a homomorphism $W_t
  \rarrow \hat{G}(C)$; equivalently, if $\tau$ is conjugate in $\hat{G}(C)$ to the homomorphism $\tau^q$. It is
  \emph{semisimple}/\emph{unipotent} if every element of its image is.

  A (tame) \emph{inertial ${\hat G}$-parameter over $C$} is a ${\hat G}(C)$-conjugacy class of extendable homomorphisms
  $\tau : I_t \rarrow {\hat G}(C)$.  It is \emph{semisimple}/\emph{unipotent} if every homomorphism in its conjugacy
  class is.  Since $I_t$ is pro-cyclic, any inertial ${\hat G}$-parameter has a unique Jordan decomposition
  $\tau = \tau_s \tau_u$ where $\tau_s$ is a semisimple inertial $\hat G$-parameter, $\tau_u$ is a unipotent inertial
  $\hat G$-parameter, and the images of $\tau_s$ and $\tau_u$ commute.
\end{definition}

For every inertial ${\hat G}$-parameter $\tau$ over $C$, let $\Xf^{\hat G}(q, \tau)$ be the Zariski closure of the $\bar{C}$-points
$\rho$ of $\Xf^{\hat G}(q)$ such that $\rho |_{I_t} \sim \tau$.  Then as in \cite[Proposition 2.6]{shotton-gln}, we have:

\begin{proposition} The assignment $\tau \mapsto \Xf^{\hat G}(q,\tau)$ is a bijection between semisimple inertial
  ${\hat G}$-parameters over $C$ and irreducible components of $\Xf^{\hat G}(q)_C$.
\end{proposition}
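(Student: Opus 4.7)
The plan is to reduce to $\hat G = GL_{n,\Oc}$ (by the product decomposition) and identify $\Xf^{GL_n}(q)_C$ with the closed subscheme of $GL_{n,C}^2$ cut out by $\Phi\Sigma\Phi^{-1} = \Sigma^q$, as in the proof of Proposition~\ref{prop:representable}. The analysis then proceeds via the characteristic polynomial of $\Sigma = \rho(\sigma)$ combined with a Jordan decomposition argument, in the style of \cite[Proposition~2.6]{shotton-gln}.

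First I would observe that the characteristic polynomial $\ch(\Sigma) \in \AA^n(C)$ must be \emph{$q$-stable}, meaning the multiset of its roots in $\bar C^\times$ is preserved by $\alpha \mapsto \alpha^q$. Over any field this forces every root to be a root of unity (the permutation $\alpha \mapsto \alpha^q$ on the root multiset has finite order), so the closed subscheme of $\AA^n_C$ consisting of such monic polynomials of degree $n$ with nonzero constant term is zero-dimensional --- a finite disjoint union of closed points. The characteristic polynomial morphism therefore partitions $\Xf^{GL_n}(q)_C$ into finitely many clopen pieces $\Xf_{\tau_s}$, indexed by $q$-stable monic polynomials, or equivalently by semisimple inertial parameters $\tau_s$ over $C$ (a semisimple conjugacy class in $GL_n(C)$ being determined by its characteristic polynomial).

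Next I would show that each $\Xf_{\tau_s}$ is irreducible of dimension $n^2$, matching $\dim \Xf^{GL_n}(q)_C = n^2$ from Proposition~\ref{prop:representable} and hence a full irreducible component. Fix a representative $\Sigma_s^0$ of $\tau_s$, let $Z = Z_{GL_n}(\Sigma_s^0)$ (a connected reductive Levi subgroup), and fix $\Phi_0$ with $\Phi_0\Sigma_s^0\Phi_0^{-1} = (\Sigma_s^0)^q$. Using Jordan decomposition, any $(\Sigma,\Phi) \in \Xf_{\tau_s}$ can be conjugated by a suitable $g \in GL_n$ into the form $\Sigma = \Sigma_s^0 U$, with $U$ a unipotent element of $Z$, and $\Phi = \Phi_0 h$ with $h \in Z$, subject to the compatibility $(\Phi_0 h)\,U\,(\Phi_0 h)^{-1} = U^q$. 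This exhibits $\Xf_{\tau_s}$ as the image of an irreducible variety $GL_n \times Y$, where $Y \subset Z^{\mathrm{uni}} \times Z$ is the subscheme cut out by the twisted commutation relation above.

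To finish the bijection, I would match $\Xf_{\tau_s}$ with $\Xf^{\hat G}(q,\tau)$: the latter, as the Zariski closure of the locus $\rho|_{I_t}\sim\tau$, lies in the (clopen, irreducible) $\Xf_{\tau_s}$ and must equal it by dimension, while distinct $\tau_s$ give disjoint pieces. The main obstacle is the irreducibility of $Y$, which is essentially an instance of the same proposition for the smaller reductive group $Z$ at a purely unipotent inertial parameter; this can be handled by reduction to Section~\ref{sec:unipotent-reduction}, or directly from the irreducibility of the unipotent variety in a connected reductive group together with an analysis of how $\Phi_0$-twisted conjugation interacts with its orbits.
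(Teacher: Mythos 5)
Your first step (the characteristic polynomial, equivalently the map $\ch_I$ to the finite scheme $\Sc^{\hat G}(q)_C$, decomposes $\Xf^{\hat G}(q)_C$ into clopen pieces indexed by semisimple inertial parameters) is correct. The problem is the second step: the claim that each piece $\Xf_{\tau_s}$ is irreducible is false, and your own reduction to the variety $Y \subset Z^{\mathrm{uni}} \times Z$ makes this visible. Stratify $Y$ by the $Z$-conjugacy class of the unipotent element $U$. Over the orbit of a fixed $U$, the fibre $\{h : (\Phi_0h)U(\Phi_0h)^{-1}=U^q\}$ is a torsor under $Z_Z(U)$ (nonempty, since $U$ and $U^q$ always have the same Jordan type), so each stratum has dimension exactly $\dim \mathcal{O}_U + \dim Z_Z(U) = \dim Z$. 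All strata are therefore equidimensional of full dimension, each is irreducible (orbits and centralizers in products of general linear groups are connected), and no stratum can lie in the closure of another (the boundary of an irreducible stratum closure has strictly smaller dimension). Hence $Y$ has one irreducible component for \emph{each} unipotent class, not one. Concretely, for $\hat G = GL_2$ and $\tau_s$ trivial, the unipotent fibre of $\Xf^{GL_2}(q)_C$ is the union of the two $4$-dimensional closed sets $\{\rho(\sigma)=1\}$ and the closure of the regular-unipotent locus, which meet only in codimension one; it is connected but not irreducible. So the dimension-matching argument you propose in the last two paragraphs cannot close, and appealing to irreducibility of the unipotent variety of $Z$ does not help, because the fibre dimensions jump exactly in step with the orbit dimensions.

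What your stratification actually proves is that the assignment $\tau \mapsto \Xf^{\hat G}(q,\tau)$, applied to \emph{all} inertial parameters $\tau$ (semisimple part together with the Jordan type of the unipotent part in the centralizer), is a bijection onto irreducible components; restricted to semisimple $\tau$ it is injective but misses every component whose generic inertial parameter has nontrivial unipotent part. This is consistent with how the paper itself uses the moduli space: in Section~\ref{sec:combining} the components $\Cc_\tau$ of $\Xf^{\hat G}(q)$ are indexed by arbitrary inertial parameters $\tau$, and the local model of Theorem~\ref{thm:regular-unipotent} at a regular-unipotent point has several minimal primes in characteristic zero even though the semisimplified inertial parameter is constant there. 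You should therefore not try to force irreducibility of the fibres of $\ch_I$; instead, either prove the statement with ``semisimple'' removed (which is what your strategy delivers), or consult the precise statement and proof of \cite[Proposition~2.6]{shotton-gln}, to which the paper defers without giving an argument, to see exactly which set is being put in bijection with the components.
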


\subsection{Moduli of semisimple parameters}
\label{sec:moduli-semisimple}
Let ${\hat T}$ be a maximal split torus in ${\hat G}$, and let $W$ be its Weyl group.  Then the quotient ${\hat T}/W$ is a smooth affine
scheme over $\Oc$ of relative dimension the $\Oc$-rank of ${\hat G}$.  If ${\hat G} = GL_n$ and ${\hat T}$ is the standard torus, then we
write an element of ${\hat T}$ as $\diag(x_1, \ldots, x_n)$.  Then ${\hat T} = \Spec\Oc[x_1^{\pm 1}, \ldots, x_n^{\pm 1}]$ and
\[{\hat T}/W = \Spec \Oc[x_1^{\pm 1}, \ldots, x_n^{\pm 1}]^{S_n} =  \Spec \Oc[e_1, \ldots, e_n, e_n^{\pm 1}]\] where $e_i$ is the $i$th
elementary symmetric polynomial in the $x_i$.

\begin{lemma} There is a unique $\Oc$-morphism $\ch : {\hat G} \rarrow {\hat T}/W$ that extends the quotient map ${\hat T} \rarrow
  {\hat T}/W$ and is invariant under conjugation.
\end{lemma}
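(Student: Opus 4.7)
The plan is to reduce immediately to the case $\hat{G} = GL_n$ by taking products over the factors, and then to construct $\ch$ explicitly via the coefficients of the characteristic polynomial, with uniqueness coming from a density argument.

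For existence, I would define $\ch : GL_n \rarrow \hat{T}/W = \Spec \Oc[e_1, \ldots, e_n, e_n^{\pm 1}]$ by sending a matrix $g$ to the coefficients of its characteristic polynomial $\det(T \cdot I - g) = T^n - c_1(g) T^{n-1} + \cdots + (-1)^n c_n(g)$. Each $c_i$ is a polynomial in the matrix entries of $g$, so this is a morphism of schemes over $\Oc$; moreover $c_n(g) = \det(g)$ is a unit on $GL_n$, so the map does land in $\hat{T}/W$ (with $e_i \mapsto c_i$). Restricted to a diagonal matrix $\diag(x_1,\ldots,x_n)$ the $c_i$ are exactly the elementary symmetric polynomials in the $x_i$, which is precisely the quotient map $\hat{T} \rarrow \hat{T}/W$. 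Conjugation invariance is immediate from invariance of the characteristic polynomial.

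For uniqueness, suppose $f_1, f_2 : \hat{G} \rarrow \hat{T}/W$ are two $\Oc$-morphisms satisfying the hypotheses. Consider the conjugation morphism $c : \hat{G} \times \hat{T} \rarrow \hat{G}$, $(g,t) \mapsto g t g^{-1}$. By conjugation invariance, $f_i \circ c$ factors through the second projection and agrees with the quotient map $\hat{T} \rarrow \hat{T}/W$; hence $f_1 \circ c = f_2 \circ c$. Therefore $f_1$ and $f_2$ agree on the scheme-theoretic image of $c$. Since on every fibre of $\hat{G}$ over $\Spec \Oc$ the regular semisimple locus is dense open, this image contains a fibrewise dense open of $\hat{G}$. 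Because $\hat{G} = GL_n/\Oc$ is smooth (in particular $\Oc$-flat with geometrically reduced fibres) and $\hat{T}/W$ is separated, the equalizer of $f_1$ and $f_2$ is a closed subscheme containing a schematically dense open, hence equals $\hat{G}$.

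The only non-routine point is ensuring the density argument works integrally rather than just fibrewise, but this is handled by $\Oc$-flatness and reducedness of $GL_n$: a closed subscheme of a reduced, flat $\Oc$-scheme that contains the regular semisimple locus on each fibre is everything. Everything else is bookkeeping with the explicit definition.
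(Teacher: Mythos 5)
Your proof is correct and takes the same route as the paper: reduce to $GL_n$ and define $\ch$ by sending $g$ to the coefficients of its characteristic polynomial. The paper leaves uniqueness implicit, whereas you supply the standard density-of-diagonalizable-elements argument, which is a sound way to fill in that step.
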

\begin{proof}
  We can reduce to the case ${\hat G} = GL_n$ and ${\hat T}$ is the standard torus.  Then the map takes $g$ to the point of ${\hat T}/W$ at
  which $e_i$ is the $X^i$-coefficient in the characteristic polynomial of $g$.
\end{proof}
\begin{definition} The $q$-power morphism $q : {\hat T} \rarrow {\hat T}$ takes $t$ to $t^q$.  It descends to a morphism
  \[q : {\hat T}/W \rarrow {\hat T}/W.\]
  We write $({\hat T}/W)^q$ for the fixed-point scheme of $q : {\hat T}/W \rarrow {\hat T}/W$.
\end{definition}
If ${\hat G} = GL_n$ and ${\hat T}$ is standard, we write $q^*e_i$ for the polynomial in the $x_i$ such that
$q^*e_i(x_1, \ldots, x_n) = e_i(x_1^q, \ldots, x_n^q)$, and let \[I_{q,n} \normal \Oc[e_1, \ldots, e_n, e_n^{-1}]\]
be the ideal generated by $(q^*e_i - e_i)_{i=1}^n$.  Then
\[({\hat T}/W)^q = \Spec B_{q,n}\]
for $B_{q,n} = \Oc[e_1, \ldots, e_n, e_n^{-1}]/I_{q,n}$.

\begin{lemma} The fixed-point scheme $({\hat T}/W)^q$ is finite flat over $\Spec \Oc$ and reduced.
\end{lemma}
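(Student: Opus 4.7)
The plan is to establish the three properties---finite, flat, reduced---in turn, reducing to $\hat{G}=GL_n$. Write $C_{q,n}=\Oc[x_1^{\pm 1},\ldots,x_n^{\pm 1}]/(e_i(x_1^q,\ldots,x_n^q)-e_i(x_1,\ldots,x_n):1\le i\le n)$ for the pullback of $B_{q,n}$ under the quotient $\hat{T}\to\hat{T}/W$. For finiteness, the defining relations of $C_{q,n}$ force the multisets $\{x_j\}$ and $\{x_j^q\}$ to coincide, so $q$-th powering permutes the $x_i$ and each $x_i$ satisfies $x_i^{q^{n!}}=x_i$, making $C_{q,n}$ finite over $\Oc$. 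Since $\Oc[x^{\pm 1}]$ is free of rank $n!$ over $\Oc[e_1,\ldots,e_n,e_n^{-1}]$, faithful flatness gives an injection $B_{q,n}\hookrightarrow C_{q,n}$, so $B_{q,n}$ is a finite $\Oc$-module.

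For flatness, $B_{q,n}$ is the quotient of the regular Cohen-Macaulay ring $\Oc[e_1,\ldots,e_n,e_n^{-1}]$ of Krull dimension $n+1$ by $n$ equations; it is finite over $\Oc$ with non-empty special fibre (containing $[\bar e]$), so $\dim B_{q,n}=1=(n+1)-n$. The equations therefore form a regular sequence, $B_{q,n}$ is a complete intersection, and miracle flatness yields $\Oc$-flatness. Then $B_{q,n}\hookrightarrow B_{q,n}\otimes_\Oc E$, so reducedness follows from reducedness of the generic fibre $B_{q,n}\otimes E$. Being a finite algebra over a characteristic-zero field, this is equivalent to étaleness over $E$, which one checks by showing the tangent space vanishes at each $\bar{E}$-point of $(\hat{T}/W)^q$.

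Fix such a point $[t_0]$ and a lift $t_0\in\hat{T}(\bar{E})$ with $t_0^q=wt_0$ for some $w\in W$. Since $q$-th powering preserves the partition of $\{1,\ldots,n\}$ by equality of coordinates, $w$ lies in the normaliser of the stabiliser $W_{t_0}=\prod_j S_{m_j}$ (the blocks $B_j$ being the sets of indices with equal $t_0$-coordinates). By a general property of finite quotients together with Chevalley-Shephard-Todd applied to $W_{t_0}$ (a product of symmetric groups is a reflection group), $\widehat{\Oc}_{\hat{T}/W,[t_0]}=\widehat{\Oc}_{\hat{T},t_0}^{W_{t_0}}$ is a power series ring in the basic $W_{t_0}$-invariants $\sigma_k^{(j)}$ (the $k$-th elementary symmetric polynomial in the coordinates of block $B_j$), with $\sigma_k^{(j)}$ of degree $k$. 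The twisted map $\tilde{q}(t)=w^{-1}t^q$ fixes $t_0$, lifts $q\colon\hat{T}/W\to\hat{T}/W$ on formal neighbourhoods, and by the normaliser condition preserves $W_{t_0}$-invariants. In centred coordinates $u_i=t_i/(t_0)_i-1$ one has $\tilde{q}^* u_i=qu_{w(i)}+O(u^2)$, giving $\tilde{q}^*\sigma_k^{(j)}\equiv q^k\sigma_k^{(j')}$ modulo basic invariants of degree $>k$ and modulo the square of the maximal ideal, where $j'$ indexes the image block. In the filtration by $k$, the matrix of $q^*$ on the cotangent at $[t_0]$ is therefore block upper-triangular with each diagonal block a permutation matrix scaled by $q^k$, and its eigenvalues are $q^k\zeta$ for positive integers $k$ and roots of unity $\zeta$---none equal to $1$ in characteristic zero. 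Hence $q^*-\id$ is invertible on the cotangent, and the tangent space to $(\hat{T}/W)^q$ at $[t_0]$ vanishes.

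The main obstacle is this final eigenvalue computation at non-regular $[t_0]$: one must combine the Chevalley-Shephard-Todd description of $\widehat{\Oc}_{\hat{T}/W,[t_0]}$ with the fact that $q$-th powering preserves block sizes (giving $w\in N_W(W_{t_0})$) in order to identify the action of $q^*$ on the cotangent via the twisted lift $\tilde{q}$.
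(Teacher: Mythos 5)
The finiteness step is where your argument breaks down, and the asserted identity is actually false. You claim that in $C_{q,n}=\Oc[x_1^{\pm1},\ldots,x_n^{\pm1}]/(q^*e_i-e_i)$ each $x_i$ satisfies $x_i^{q^{n!}}=x_i$. The relations do give the identity of polynomials $\prod_j(X-x_j)=\prod_j(X-x_j^q)$ in $C_{q,n}[X]$, but ``$q$-th powering permutes the $x_i$'' is only a statement about field-valued points; it is not an identity in the (possibly non-reduced) ring. Concretely, take $n=q=2$: the relation $e_2^q-e_2=e_2(e_2-1)$ with $e_2=x_1x_2$ a unit forces $x_1x_2=1$, and then $x_1^2+x_2^2-x_1-x_2=x_1^{-2}(x_1-1)^2(x_1^2+x_1+1)$, so
\[C_{2,2}\otimes E\cong E[x_1^{\pm1}]/\bigl((x_1-1)^2(x_1^2+x_1+1)\bigr),\]
which is non-reduced, and $x_1^{q^{n!}}-x_1=x_1^4-x_1=x_1(x_1-1)(x_1^2+x_1+1)$ is a nonzero element of it. (Note this also shows the pullback $C_{q,n}$ to $\hat{T}$ need not be reduced even though $B_{q,n}$ is.) The true, pointwise version of your claim --- every coordinate of every field-valued point is a $(q^{n!}-1)$-th root of unity --- only yields quasi-finiteness of $(\hat{T}/W)^q\to\Spec\Oc$, which does not imply finiteness ($\Spec\Oc[T]/(\lambda T-1)\to\Spec\Oc$ is quasi-finite and not finite). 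Since your dimension count for flatness, and hence the injection $B_{q,n}\hookrightarrow B_{q,n}\otimes E$ used for reducedness, both rest on finiteness, the gap propagates. Two ways to close it: either check the valuative criterion of properness directly (the coordinates of any field-valued point are roots of unity, hence integral, so the point extends over any valuation ring; proper, affine and quasi-finite implies finite), or do what the paper does and exhibit an explicit finite spanning set of monomials $e_1^{a_1}\cdots e_n^{a_n}$ for $B_{q,n}$ by a symmetric-function computation.

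Apart from this, your route is genuinely different from the paper's and the remaining steps are sound. The paper bounds the number of $\Oc$-module generators of $B_{q,n}$ by $q^{n-1}(q-1)$ combinatorially, counts the $\bar{E}$-points by identifying them with semisimple conjugacy classes of $GL_n(k)$, and squeezes: equality everywhere gives freeness and reducedness of the generic fibre simultaneously. You instead get flatness from the complete-intersection/dimension argument (correct once finiteness is known: Krull's height theorem forces every minimal prime of $I_{q,n}$ to have height exactly $n$, so $B_{q,n}$ is Cohen--Macaulay of pure dimension one with no component in the special fibre, hence $\Oc$-torsion-free), and reducedness of the generic fibre from the infinitesimal computation that $q^*-\id$ is invertible on the cotangent space at each $\bar{E}$-point, using Chevalley--Shephard--Todd for $W_{t_0}$ and the twisted lift $\tilde{q}=w^{-1}\circ(\,\cdot\,)^q$; the eigenvalues $q^k\zeta$ with $k\geq 1$ are indeed never $1$ in characteristic zero. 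That tangent-space argument is correct and arguably more conceptual than the point count (and it makes transparent why the special fibre fails to be reduced when $q\equiv 1 \bmod \ell$), but it does not by itself produce the finiteness or the explicit rank $q^{n-1}(q-1)$ that the paper's argument gives for free.
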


\begin{proof}
  Again, we assume that ${\hat G} = GL_n$ and ${\hat T}$ is the standard torus.  I claim that
  $B_{q,n} = \Oc[e_1, \ldots, e_n, e_n^{-1}]/I_q$ is generated as an $\Oc$-module by monomials of the form
  $e_1^{a_1}e_2^{a_2}\ldots e_n^{a_n}$ where $0 \leq a_i \leq q - 1$ for all $i$, and $a_n < q-1$.  Granted this, we see
  that $B_{q,n}$ is a finitely generated $\Oc$-module and that
  \[\dim_{\bar{E}} B_{q,n} \otimes_\Oc \bar{E} \leq q^{n-1}(q-1).\] However, the number of $E$-points of $B_{q,n}$ is
  the number of tuples $(z_1, \ldots, z_n)$ of elements of $\bar{E}^\times$ that are permuted by the $q$-power map.
  This number is the same if $\bar{E}^\times$ is replaced by $\bar{k}^\times$; but then it is simply the number of
  semisimple conjugacy classes of $GL_n(k)$, which is seen to be $q^{n-1}(q-1)$ by considering the characteristic
  polynomial.  This shows that the number of $\bar{E}$-points of $B_{q,n}$ is equal to
  $\dim_{\bar{E}} B_{q,n}\otimes \bar{E}$ which is in turn equal to the minimal number of generators of $B_{q,n}$ as an
  $\Oc$-module, whence the result.
  
  To prove the claim, we make an elementary argument with symmetric functions.  If
  $\lambda = (\lambda_1, \lambda_2, \ldots)$ is a partition of a nonnegative integer $|\lambda|$ in which each positive
  integer $j$ appears $a_j = a_j(\lambda)$ times, we let $e_\lambda = \prod_{i=1}^\infty e_{\lambda_i} = \prod_{j=1}^\infty e_j^{a_j}$
  (setting $e_j = 0$ for $j > n$, and $0^0 = 1$). Let $m_\lambda$ be the homogeneous symmetric polynomial in the $x_i$
  of type $\lambda$ (that is, the sum of all monomials of the form $\prod_{i=1}^n x_{\pi(i)}^{\lambda_i}$ for
  $\pi \in S_n$), regarded as an element of the ring $\Oc[e_1, \ldots, e_n]$.  Let $M$ be the $\Oc$-submodule of
  $\Oc[e_1, \ldots, e_n]$ spanned by the set
  \[S = \{e_\lambda : a_j(\lambda) \leq q \text{ for all } 1 \leq i \leq n\}\] and the ideal $I_q$.  Suppose that
  $M \neq \Oc[e_1, \ldots, e_n]$.  Then we may choose $e_\lambda \not \in M$ such that $|\lambda|$ is minimal and such
  that, subject to this, $\lambda$ is maximal with respect to the dominance order $\succ$ on partitions.  By assumption,
  there is some $j$ such that $a_j(\lambda) \geq q$.  Let $\lambda^*$ be the partition such that
  $e_{\lambda^*}e_j^q = e_\lambda$.

  Now, we have \[m_{(q^i)} = q^*e_i \equiv e_i \mod I_q.\] By \cite[Theorem~7.4.4]{stanley1999enumerative},
  $m_{(q^i)} = e_{(i^q)} + \sum_{\mu \succ (i^q)} c_\mu e_\mu$ for some coefficients $c_\mu \in \ZZ$.  Therefore
  \[e_i^q = e_{(i^q)} \equiv e_i - \sum_{\mu \succ (i^q)} c_\mu e_\mu \mod I_q\] and so
  \[ e_\lambda \equiv e_i e_{\lambda^*} - \sum_{\mu \succ (i^q)}c_\mu e_\mu e_{\lambda^*} \mod I_q.\] As $q \geq 2$,
  $e_i e_{\lambda^*} \in M$ by minimality of $|\lambda|$.  Each term $e_{\mu} e_{\lambda^*}$ has the form $e_{\kappa}$
  for a partition $\kappa \succ \lambda$ (depending on $\mu$), and is therefore in $M$ by maximality of $\lambda$.
  Therefore $e_\lambda \in M$, a contradiction.

  Thus $\Oc[e_1, \ldots, e_n]/I_q$ is spanned by those $e_\lambda$ with all $a_j(\lambda) < q$.  In
  $\Oc[e_1, \ldots, e_n, e_n^{-1}]/I_q$ we may replace $q^*e_n - e_n = e_n^q - e_n$ in $I_q$ by $e_n^{q-1} - 1$.  It
  follows that $\Oc[e_1, \ldots, e_n]/I_q$ is spanned by those $e_\lambda$ with all $a_j(\lambda) < q$ and with
  $a_n(\lambda) < q-1$, as required.
\end{proof}

\begin{remark} We do not actually need this result, and in fact it follows from Theorem~\ref{thm:def-rings} below and the
  corresponding facts for $\Xf^{\hat{G}}$.
\end{remark}

If $A$ is an $\Oc$-algebra, let $\Sc^{\hat G}(q)(A)$ be the set of $W$-conjugacy classes homomorphisms
$\tau_s : I_t \rarrow {\hat T}(A)$ such that, for some $w \in W$, $\tau_s(\sigma^q) = \tau_s(\sigma)^w$ for all
$\sigma \in I_t$. Then choosing a generator of $I_t$ shows that the functor $\Sc^{\hat G}(q)$ is represented by an
affine scheme, also denoted $\Sc^{\hat G}(q)$, that is isomorphic to $({\hat T}/W)^q$ (the isomorphism depending on the
choice of generator).  If $C$ is a field containing $\FF$ or $E$, then the $C$-points of $\Sc^{\hat G}(q)$ are in
canonical bijection with the semisimple inertial ${\hat G}$-parameters over $C$.  Restriction to inertia gives a
morphism
\[\ch_{I} : \Xf^{\hat G}(q) \rarrow \Sc^{\hat G}(q).\]

\subsection{Discrete parameters}
\label{sec:discrete}

\begin{definition}
  Let $\tau : I_t \rarrow {\hat G}(C)$ be an extendable homomorphism.  We say that $\tau$ is \emph{discrete} if there is no
  proper Levi subgroup ${\hat M} \subset {\hat G}$ such that $\tau$ factors through an ${\hat M}$-parameter.  We say
  that an inertial $\hat G$-parameter is discrete if every homomorphism in its conjugacy class is.
\end{definition}

\begin{lemma}\label{lem:exists-discrete} If $\tau$ is a representative of an inertial ${\hat G}$-parameter, then there is a Levi subgroup ${\hat M}_\tau$ such
  that $\tau$ factors through a discrete inertial ${\hat M}_\tau$-parameter $\tau : I_t \rarrow {\hat M}_\tau(C)$.
\end{lemma}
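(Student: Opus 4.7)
The plan is a short Noetherian-descent argument on the poset of Levi subgroups of $\hat{G}$, using the hypothesis that $\tau$ is already extendable in $\hat{G}$ together with the transitivity of the Levi relation in products of general linear groups.

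First I would introduce the set
\[
\mathcal{L}(\tau) = \{\hat{L} \subset \hat{G} \text{ a Levi subgroup} : \tau(I_t) \subset \hat{L}(C) \text{ and the resulting map } I_t \rarrow \hat{L}(C) \text{ is extendable}\}.
\]
By assumption $\tau$ is a representative of an inertial $\hat{G}$-parameter, so $\hat{G}$ itself lies in $\mathcal{L}(\tau)$ and the set is non-empty. Note that extendability of $\tau$ in $\hat{L}$ is a strictly stronger condition than mere factorisation through $\hat{L}(C)$: it requires that $\tau^q$ and $\tau$ be conjugate inside $\hat{L}(C)$, not just inside $\hat{G}(C)$. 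This is why $\mathcal{L}(\tau)$ must be defined carefully, and it is also the reason the lemma is not completely vacuous.

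Next I would argue that $\mathcal{L}(\tau)$ has a minimal element for the inclusion order. This is purely dimension-theoretic: if $\hat{L}' \subsetneq \hat{L}$ are Levi subgroups of $\hat{G}$, then $\dim_\Oc \hat{L}' < \dim_\Oc \hat{L}$, so every descending chain in $\mathcal{L}(\tau)$ stabilises after finitely many steps. Pick any such minimal $\hat{M} = \hat{M}_\tau$.

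Finally I would verify that the inertial $\hat{M}$-parameter obtained from $\tau$ is discrete. If not, by definition there is a proper Levi $\hat{L} \subsetneq \hat{M}$ together with an $\hat{M}(C)$-conjugate of $\tau$ that factors through an extendable $\hat{L}$-parameter. Conjugating $\hat{L}$ back inside $\hat{M}$ if necessary, we may assume $\tau(I_t) \subset \hat{L}(C)$ and that $\tau$ is extendable in $\hat{L}$. Because $\hat{G}$ is a product of general linear groups, a Levi subgroup of a Levi subgroup of $\hat{G}$ is again a Levi subgroup of $\hat{G}$ (standard Levis of $\prod_i GL_{n_i}$ correspond to refinements of the block-partition, and the general case is obtained by conjugation). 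Hence $\hat{L}$ is a Levi of $\hat{G}$, so $\hat{L} \in \mathcal{L}(\tau)$, contradicting the minimality of $\hat{M}$. The only step that requires any thought is the transitivity of the Levi relation, which in our $GL$-setting is immediate; this is what I would flag as the sole subtle point of the argument.
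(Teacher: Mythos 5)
Your argument is correct and is essentially the paper's own proof: the paper likewise takes $\hat{M}_\tau$ minimal among Levi subgroups containing $\tau(I_t)$ for which $\tau$ remains extendable, with the same implicit appeal to the descending chain condition and to the fact that a Levi of a Levi of $\hat{G}$ is again a Levi of $\hat{G}$. The paper additionally records an explicit Jordan-block description of such an $\hat{M}_\tau$, but that is supplementary to the existence argument you give.
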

\begin{proof} Indeed, simply take $\hat{M}_\tau$ to be a Levi subgroup that is minimal subject to the condition that
  $\hat{M}_\tau(C)$ contains $\tau(I_t)$ and that $\tau : I_t \rarrow \hat{M}(C)$ is extendable.

  Concretely, if $[\zeta] = \{\zeta, \zeta^q, \ldots, \zeta^{q^{r-1}}\}$ is a $q$-power orbit of prime-to-$p$ order
  roots of unity in $C$ and $m \geq 1$ is an integer, let
  \[J_m([\zeta]) = \bigoplus_{i=1}^m J_m(\zeta^{q^i})\]
  (recall from Section~\ref{sec:notation} that $J_m(\zeta^{q^i})$ denotes a Jordan matrix).  Fix a topological generator $\sigma \in I_t$.  Then there is
  some $k \geq 1$ and, for $1 \leq i \leq k$, prime-to-$q$ roots of unity $\zeta_i \in C$ and integers $m_i$, such that
  $\tau(\sigma)$ is conjugate to
  \[\bigoplus_{i=1}^k J_{m_i}([\zeta_i]).\]
  We may then take $\hat{M}_\tau$ to be the standard Levi corresponding to the partition $(r_1m_1, \ldots, r_km_k)$
  where $r_i = |[\zeta_i]|$.
\end{proof}

\subsection{Deformation rings}
\label{sec:deformation}

Let $\rhobar$ be an $\FF$-point of $\Xf^{\hat G}(q)$.  Then the formal completion of $\Xf^{\hat G}(q)$ at $\rhobar$ is
\[X^{\hat G}_{\rhobar} = \Spf R^{\hat G}_{\rhobar}\] where $R^{\hat G}_{\rhobar}$ is the universal framed deformation
ring of $\rhobar$.  The morphism $\Xf^{\hat G}(q) \rarrow \Sc^{\hat G}(q)$ gives an $\FF$-point
$\bar{s} \in \Sc^{\hat G}(q)$, and we let $S^{\hat G}_{\bar{s}}$ be the formal completion of $\Sc^{\hat G}(q)$ at
$\bar{s}$.  Then we have a morphism
\[\ch_I : X^{\hat G}_{\rhobar} \rarrow S^{\hat G}_{\sbar}.\]

\begin{remark}
  Any continuous representation $\rho : W_t \rarrow GL_n(A)$ for a finite ring $A$ has a unique extension to a
  representation of $G_t$.  The deformation ring of $\rhobar$ is therefore the same as the deformation ring of its
  unique extension to $G_t$, which is the object more usually considered. 
\end{remark}

We will compute the local deformation rings at specially chosen points of the special fibre.
\begin{definition}
  let $f \geq 1$ be an integer.  We say that a ${\hat G}$-parameter $\rho : W_t \rarrow {\hat G}(\FF)$ is \emph{$f$-distinguished} if there
  is a Levi subgroup ${\hat M} \subset {\hat G}$ such that $\rho$ factors through an ${\hat M}$-parameter $\rho_{\hat M} : W_t \rarrow {\hat M}(\FF)$ with
  the following properties:
  \begin{enumerate}
  \item $\rho_{\hat M} |_{I_{t}}$ is a discrete inertial parameter;
  \item $Z_{G_\FF}(\rho(\phi^f)_s) \subset {\hat M}_{\FF}$
  \end{enumerate}
  where $Z_{G_\FF}(\rho(\phi^f)_s)$ is the centralizer of $\rho(\phi^f)_s$.

  We say that ${\hat M}$ is an \emph{allowable} Levi subgroup for $\rho$.
\end{definition}
The utility of the second condition is roughly that the eigenspace decomposition of lifts of $\rho(\phi^f)$ may be used
to conjugate lifts of $\rho$ to lie in ${\hat M}$, and so we can reduce to calculating deformation rings for discrete
parameters.

\begin{definition} \label{def:large}
  If $\hat{G}$ has rank $n$, then an integer $f \geq 1$ is \emph{large enough} for $\hat{G}$ if
  \[v_l(q^f - 1) > v_l(n!).\]
\end{definition}

The purpose of the next three sections is to prove the following theorem.  

\begin{theorem} \label{thm:def-rings} Let $f \geq 1$ be large enough for $\hat{G}$, and suppose that
  $\rhobar : W_t \rarrow {\hat G}(\FF)$ is $f$-distinguished.  Let ${\hat M}$ be an allowable Levi subgroup for
  $\rhobar$.  Then there is a formally smooth morphism
  \[\pi:X^{\hat G}_{\rhobar} \rarrow S^{\hat M}_{\bar{s}}\]
  such that the triangle
  \[\begin{tikzcd} X^{\hat M}_{\rhobar} \arrow[rd, "\ch_I"'] \arrow[r,hook] & X^{\hat G}_{\rhobar} \arrow[d, "\pi"] \\
      & S^{\hat M}_{\bar{s}}
  \end{tikzcd}\]
 commutes.
\end{theorem}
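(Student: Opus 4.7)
The plan is to reduce to the case $\hat{G} = \hat{M}$, which will be handled in the subsequent sections (ultimately via the unipotent reduction of Section~\ref{sec:unipotent-reduction} and Theorem~A), by constructing a formally smooth retraction $r : X^{\hat{G}}_{\rhobar} \to X^{\hat{M}}_{\rhobar}$ of the natural closed immersion. Given such an $r$, I would set $\pi := \ch_I \circ r$; the commutativity of the displayed triangle then follows immediately from $r$ restricting to the identity on $X^{\hat{M}}_{\rhobar}$, and $\pi$ is formally smooth as a composition of two formally smooth maps.

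To construct $r$, let $A \in \Cc_\Oc$ and let $\rho \in X^{\hat{G}}_{\rhobar}(A)$; consider the semisimple part $s := \rho(\phi^f)_s$. The largeness hypothesis $v_l(q^f - 1) > v_l(n!)$ ensures via Hensel's lemma that the eigenvalues of $\rhobar(\phi^f)_s$, which are $(q^f-1)$-th roots of unity in $\FF$, lift uniquely and distinguishably to $A$, so that $s$ is $\hat{G}(A)$-conjugate, by some $g$ reducing to $1 \bmod \mf_A$, to a fixed lift $s_0 \in \hat{M}(\Oc)$ of $\rhobar(\phi^f)_s$. The allowability condition $(2)$, combined with smoothness of centralisers, yields $Z_{\hat{G}}(s_0) \subset \hat{M}$. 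Since $\phi$ commutes with $\phi^f$, after conjugating $\rho$ by $g$ we have $\rho(\phi) \in Z_{\hat{G}}(s_0)(A) \subset \hat{M}(A)$; and using the tame relation $\phi \sigma \phi^{-1} = \sigma^q$ together with the discreteness of $\rhobar_{\hat{M}}|_{I_t}$ (which matches the $q$-power-orbit and Jordan-block structure of $\rhobar(\sigma)$ with the block structure of $\hat{M}$, by Lemma~\ref{lem:exists-discrete}), one shows that $\rho(\sigma)$ also lies in $\hat{M}(A)$.

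Formal smoothness of $r$ is then a standard Schlessinger-style check: the ambiguity in the conjugating $g$, for fixed image in $\hat{M}(A)$, is controlled by a formal neighbourhood of the identity coset in $\hat{G}/\hat{M}$, which is smooth, and the infinitesimal lifting property of $r$ over a square-zero extension $A' \twoheadrightarrow A$ can be checked by first picking an arbitrary $\hat{G}(A')$-lift of a given $\hat{G}(A)$-deformation and then adjusting by an element of $(\hat{G}/\hat{M})(A')$ reducing to the identity. The main obstacle is the verification in the middle paragraph that after conjugation the \emph{entire} image of $\rho$ --- not merely of $\phi^f$ --- lands in $\hat{M}(A)$: bridging condition~(2), which a priori only concerns $\phi^f$, to a statement about all of $W_t$, relies essentially on the largeness of $f$ and on the tame relations controlling how $\sigma$ interacts with $s_0$.
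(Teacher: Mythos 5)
Your architecture inverts the paper's order of reductions, and the inversion is not free: the crux of your argument --- that once $\rho(\phi)$ has been conjugated into $\hat{M}(A)$, the tame relation and discreteness force $\rho(\sigma)\in\hat{M}(A)$ as well --- is asserted but not proved, and it is precisely the hardest point of the whole theorem. The paper establishes this implication only under the additional hypothesis that $\rhobar$ is \emph{inertially unipotent} (Lemma~\ref{lem:distinguished-phi-implies-sigma}), and its proof there uses unipotence essentially: writing $\Sigma=1+N$ with $N$ nilpotent mod $\mf_A$, the largeness condition $v_l(q^f-1)>v_l(n!)$ yields $\Sigma^{q^f}\equiv\Sigma \bmod \mf_A I$, which is then played off against $\Phi^f\Sigma=\Sigma^{q^f}\Phi^f$ and the coprimality of the characteristic polynomials of the blocks of $\Phi^f$. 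Outside the unipotent case this congruence fails already modulo $\mf_A$ whenever the $q$-power map permutes the eigenvalues of $\rhobar(\sigma)$ nontrivially (then $\bar{\Sigma}^{q^f}$ is a nontrivial conjugate of $\bar{\Sigma}$), and $\rho(\sigma)$ does not commute with the Hensel idempotents of $\rho(\phi^f)$, so no soft argument is available. A direct generalization would require a genuinely new matrix computation combining the eigenvalue-permutation phenomenon with the nilpotent one (in small examples one finds telescoping products of eigenvalue ratios that happen to equal $1$, so the statement may well be true, but that is a computation you have not done). A secondary imprecision in the same paragraph: over an artinian $A$ the ``semisimple part'' $s_0$ of $\rho(\phi^f)$ is not the right object; one must work with the idempotents $R_i(\rho(\phi^f))$ as in Lemma~\ref{lem:diagonalization}. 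That part is repairable; the $\rho(\sigma)$ step is the real gap.

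For contrast, the paper never constructs a retraction $X^{\hat{G}}_{\rhobar}\to X^{\hat{M}}_{\rhobar}$ in the general case. It first applies the Diagonalization Lemma to (the Teichm\"{u}ller lift of) the semisimple part of $\rhobar(\sigma)$ --- not to $\rho(\phi^f)$ --- to conjugate $\rho(\sigma)$ into a Levi $\hat{L}$ and then $\hat{N}=Z_{\hat{G}}(\tilde{\tau}_s)$, using part~(2) of that lemma to control $\rho(\phi)$; it then passes to the subgroup $W_t^{(d)}$ and the corner $GL_r$ (the Clozel--Harris--Taylor-style base-change argument of Section~\ref{sec:unipotent-reduction}), twists by a Teichm\"{u}ller character to reach the inertially unipotent case, and only \emph{then} invokes the retraction of Corollary~\ref{cor:reduce-to-M}. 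Your deferral of ``the case $\hat{G}=\hat{M}$'' to Theorem~A also quietly absorbs this entire reduction (including the identification $S^{\hat{M}}_{\bar{s}}\cong \Sc^{\hat{M}'}(q^d)_{\bar{t}}$), since Theorem~\ref{thm:regular-unipotent} only treats regular unipotent $\rhobar(\sigma)$. To make your route work you must either prove the non-unipotent analogue of Lemma~\ref{lem:distinguished-phi-implies-sigma} or reorder the reductions as the paper does, performing the unipotent reduction before attempting to retract onto $\hat{M}$.
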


The following lemma will be used later to deduce a Breuil--M\'{e}zard-type result.  It is not used in the proof of
Theorem~\ref{thm:def-rings}.

\begin{lemma}\label{lem:unique-cpt} Let $f$ be large enough for $\hat{G}$.  Every irreducible component of $\Xf^{\hat
    G}(q)_\FF$ contains an $f$-distinguished
  $\FF'$-point $\rhobar$ that lies on no other component, for some finite extension $\FF'/\FF$.
\end{lemma}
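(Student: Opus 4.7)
The plan is to use the bijection (from the proposition in Section~\ref{sec:inertial}) between irreducible components of $\Xf^{\hat G}(q)_\FF$ and semisimple inertial $\hat G$-parameters. Let $\Cc$ be a component, with associated parameter $\tau_s$ defined over some finite extension $\FF'/\FF$ (which we are free to enlarge). Apply Lemma~\ref{lem:exists-discrete} to obtain a Levi $\hat M \subset \hat G$ such that $\tau_s$ factors through a discrete inertial $\hat M$-parameter. Reducing to $\hat G = GL_n$ and using the explicit description in the proof of Lemma~\ref{lem:exists-discrete}, we may take $\hat M = \prod_{i=1}^k GL_{r_i}$ as a standard Levi, with $\tau_s(\sigma)|_{GL_{r_i}} = \diag(\zeta_i, \zeta_i^q, \ldots, \zeta_i^{q^{r_i-1}})$ regular semisimple in $GL_{r_i}$ for each $i$.

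I would construct $\rhobar : W_t \rarrow \hat M(\FF') \subseteq \hat G(\FF')$ by setting $\rhobar|_{I_t} = \tau_s$ and choosing $\rhobar(\phi)$ generically. The admissible elements $\rhobar(\phi) \in \hat M(\FF')$ satisfying the relation $\rhobar(\phi) \tau_s(\sigma) \rhobar(\phi)^{-1} = \tau_s(\sigma)^q$ form a torsor under the centralizer $Z_{\hat M}(\tau_s(I_t))$, which equals the maximal torus $\hat T_{\hat M}$ of $\hat M$ since $\tau_s$ is discrete in $\hat M$. An explicit representative in block $GL_{r_i}$ is $D_i P_i$, where $P_i$ is the appropriate cyclic-shift permutation matrix and $D_i = \diag(d_{i,1}, \ldots, d_{i,r_i})$ has the $d_{i,j} \in \FF'^\times$ as free parameters; a short calculation yields $(D_iP_i)^{r_i} = c_i I_{r_i}$ with $c_i := \prod_j d_{i,j}$, so the characteristic polynomial of $(D_iP_i)$ is $X^{r_i} - c_i$, and the eigenvalues of $\rhobar(\phi^f)|_{GL_{r_i}}$ are the $f$-th powers of the $r_i$-th roots of $c_i$.

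The condition $Z_{\hat G_\FF}(\rhobar(\phi^f)_s) \subseteq \hat M_\FF$ in the definition of $f$-distinguished is equivalent to asking that the eigenvalue multisets of $\rhobar(\phi^f)_s|_{GL_{r_i}}$ be pairwise disjoint as $i$ varies, so that the eigenspace decomposition of $\rhobar(\phi^f)_s$ refines the block decomposition of $\hat M$. This is a non-empty Zariski-open condition on $(c_1, \ldots, c_k) \in (\bar\FF^\times)^k$, realizable by a suitable choice of the $d_{i,j}$ over a sufficiently large finite extension $\FF'$. The resulting $\rhobar$ is $f$-distinguished with allowable Levi $\hat M$, lies on $\Cc = \Xf^{\hat G}(q, \tau_s)$ since $\rhobar|_{I_t} \sim \tau_s$, and lies on no other component: the morphism $\ch_I : \Xf^{\hat G}(q) \rarrow \Sc^{\hat G}(q)$ to the finite scheme $\Sc^{\hat G}(q)$ is constant on each irreducible component of the special fibre and takes distinct values on distinct components, so the components are pairwise disjoint.

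The main delicate point is verifying the non-emptiness of the open condition over a finite field. When each $r_i \mid f$ --- which is the typical situation once $f$ is large enough, since $r_i \leq n$ --- the block $\rhobar(\phi^f)|_{GL_{r_i}}$ becomes the scalar matrix $c_i^{f/r_i} I_{r_i}$, and the disjointness condition reduces to the elementary requirement that the $k$ scalars $c_i^{f/r_i}$ be pairwise distinct in $\bar\FF^\times$, easily arranged. The general case (where $l \mid r_i$ or $r_i \nmid f$) is a slight complication of eigenvalue bookkeeping but still a non-empty open condition on a torus, hence satisfiable over a sufficiently large $\FF'$.
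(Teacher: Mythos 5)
Your construction of the $f$-distinguished point is in the spirit of the paper's (the paper extends the discrete $\hat{M}$-parameter to $W_t$ and twists $\rhobar(\phi)$ by a general element of $Z(\hat{M})(\FF')$, which is exactly your freedom in the scalars $c_i$), and your eigenvalue criterion for $Z_{\hat{G}_\FF}(\rhobar(\phi^f)_s)\subset\hat{M}_\FF$ is correct. But there are two genuine gaps. First, you only ever produce points $\rhobar$ with $\rhobar|_{I_t}$ \emph{semisimple}. The irreducible components of $\Xf^{\hat G}(q)_\FF$ are indexed by all inertial parameters $\tau=\tau_s\tau_u$ over $\bar\FF$, not only the semisimple ones: for instance the locus of $(\Sigma,\Phi)$ with $\Sigma$ regular unipotent has dimension $\dim\hat{G}$ and its closure is a component distinct from the closure of the locus $\Sigma=1$. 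A point with $\rhobar|_{I_t}=\tau_s$ lies on the component labelled by $\tau_s$ and not on the one labelled by $\tau_s\tau_u$ when $\tau_u\neq 1$, so your argument misses entire families of components --- including precisely the regular-unipotent one that Theorem~\ref{thm:regular-unipotent} is built to handle, and for which the allowable Levi has blocks of size $r_im_i$ rather than $r_i$. The fix is to take $\rhobar|_{I_t}=\tau$ itself, Jordan blocks included, using the full strength of Lemma~\ref{lem:exists-discrete}.

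Second, and more seriously, the concluding claim that the components of $\Xf^{\hat G}(q)_\FF$ are pairwise disjoint because $\ch_I$ separates them is false. The map $\ch_I$ records only the semisimplification of $\rho|_{I_t}$, so it takes the \emph{same} value on the trivial-type component and on the regular-unipotent component; and these two components really do meet (already for $GL_2$: the closure of the regular-unipotent locus contains points $(1,\Phi)$ for every $\Phi$ with $\Phi N\Phi^{-1}=qN$ for some nonzero nilpotent $N$, and all such points lie on the component $\{1\}\times GL_2$ as well). This is exactly why the lemma restricts to $f$-distinguished points and why its proof cannot be purely combinatorial: the paper deduces uniqueness of the component through $\rhobar$ from Theorem~\ref{thm:def-rings}, which gives a formally smooth map $X^{\hat G}_{\rhobar}\rarrow S^{\hat M}_{\bar s}$ whose target has local artinian (hence irreducible) special fibre, so the special fibre of the completed local ring is irreducible; faithful flatness of $\Oc_{\Xf^{\hat G}(q)_\FF,\rhobar}\rarrow R^{\hat G}_{\rhobar}\otimes\FF$ then shows $\rhobar$ lies on a unique component of $\Xf^{\hat G}(q)_\FF$. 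Without some argument of this kind (local irreducibility at the chosen point), the last step of your proof does not go through.
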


\begin{proof} Consider an irreducible component labelled by the inertial ${\hat G}$-parameter $\tau$.  Let ${\hat M}$ be
  a Levi subgroup such that $\tau$ factors through a discrete inertial ${\hat G}$-parameter $\tau_{\hat M}$ (one exists,
  by Lemma~\ref{lem:exists-discrete}).  We may extend $\tau$ to an ${\hat M}$-parameter $\rhobar_{\hat M}$, and so a
  ${\hat G}$-parameter $\rhobar$.  Twisting $\rhobar_{\hat M}$ by a sufficiently general element of $Z({\hat M})(\FF')$,
  for some extension $\FF'/\FF$, will ensure that $\rhobar$ is $f$-distinguished with allowable Levi ${\hat M}$.

  That $\rhobar$ lies on a unique irreducible component can be seen directly, but it is easier to appeal to
  Theorem~\ref{thm:def-rings}, which implies that the special fibre of $X^{\hat G}_{\rhobar, \FF'}$ has a unique
  irreducible component since the same is true for $S^{\hat M}_{\bar{s}}$, whose special fibre is local artinian.  As
  the completion map $\Oc_{\Xf^{\hat G}(q)_\FF, \rhobar} \rarrow R^{\hat G}_{\rhobar} \otimes \FF$ is faithfully flat,
  it follows that
  $\Xf^{\hat G}(q)_\FF$ has a unique irreducible component containing $\rhobar$ as required.
\end{proof}

\subsection{Diagonalization}
\label{sec:diagonalization}

\begin{lemma}\label{lem:smoothness} Suppose that $X$, $S$ and $F$ are objects of $\FS_{\Oc}$ and that we
    have morphisms $j : F \rarrow S$, $p :F  \rarrow X$ and $s : X \rarrow F$ such that:
  \begin{enumerate}
  \item $p \circ s = \id_X$; and
  \item $j \circ s \circ p$ is formally smooth.
  \end{enumerate}
  Then $i = j \circ s$ is formally smooth.
\[\begin{tikzcd}
F \arrow[r, "p",swap] \arrow[rd,"j"'] & X \arrow[l, "s", dotted, bend right, swap] \arrow[d, "i = j \circ s"]\\
                              & S      
\end{tikzcd}\]
\end{lemma}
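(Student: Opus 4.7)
The plan is to verify the infinitesimal lifting criterion for $i = j \circ s$ directly, using the formal smoothness of $j \circ s \circ p = i \circ p$. The key structural observation is that, because $p \circ s = \id_X$, the morphism $i$ is a retract of $i \circ p$: precomposing $i \circ p$ with $s$ returns $i$. Formal smoothness should survive such a retraction, and the verification is a one-line diagram chase.

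Concretely, suppose we are given a surjection $A \onto B$ in $\Cc_\Oc$ (or more generally a small extension, which suffices for formal smoothness in $\FS_\Oc$) inducing $\iota : \Spf(B) \rarrow \Spf(A)$, together with morphisms $a : \Spf(B) \rarrow X$ and $c : \Spf(A) \rarrow S$ satisfying $i \circ a = c \circ \iota$. We must produce $\tilde{a} : \Spf(A) \rarrow X$ with $\tilde{a} \circ \iota = a$ and $i \circ \tilde{a} = c$.

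First, push the data through $s$: set $b = s \circ a : \Spf(B) \rarrow F$. Then
\[(j \circ s \circ p) \circ b = j \circ s \circ (p \circ s) \circ a = j \circ s \circ a = i \circ a = c \circ \iota,\]
so $(b, c)$ is a lifting problem for the formally smooth morphism $j \circ s \circ p : F \rarrow S$. Solving it yields $\tilde{b} : \Spf(A) \rarrow F$ with $\tilde{b} \circ \iota = b$ and $(j \circ s \circ p) \circ \tilde{b} = c$. Now define $\tilde{a} = p \circ \tilde{b}$. Then $\tilde{a} \circ \iota = p \circ \tilde{b} \circ \iota = p \circ s \circ a = a$, and $i \circ \tilde{a} = (j \circ s) \circ (p \circ \tilde{b}) = c$, as required.

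There is no serious obstacle here; the only thing to be slightly careful about is to organize the categorical data so that the lifting problem for $i$ is genuinely pushed to a lifting problem for $i \circ p$, which is where the identity $p \circ s = \id_X$ is used. Everything else is formal.
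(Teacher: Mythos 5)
Your proof is correct and is essentially the paper's argument: both rest on the observation that $i$ is a retract of $i\circ p$ over $S$ (via $s$ and $p$ with $p\circ s=\id_X$), so formal smoothness passes from $i\circ p = j\circ s\circ p$ to $i$. The only difference is that the paper delegates the final retraction step to \cite[\href{https://stacks.math.columbia.edu/tag/00TL}{Lemma 00TL}]{stacks-project}, whereas you carry out the underlying lifting-criterion diagram chase by hand; both are fine.
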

\begin{proof} Define $j' : F \rarrow S$ by $j' = i \circ p = j\circ s \circ p$.  If $F$ and $X$ are made into formal
  schemes over $S$ via $j'$ and $i$ respectively, then $p$ and $s$ are maps of formal schemes over $S$.  Indeed,
  $i \circ p = j'$ by definition, and $j' \circ s = i \circ p \circ s = i$ by the hypothesis that $p \circ s = \id_X$.
  
  Now, as $j'$ is formally smooth by hypothesis, we are (after converting to objects of $\Cc_{\Oc}^\w$ and reversing all
  arrows) in the situation of
  \cite[\href{https://stacks.math.columbia.edu/tag/00TL}{Lemma 00TL}]{stacks-project}, taking into account the remark
  following that lemma. The result follows.
\end{proof}

\begin{lemma}[Diagonalization Lemma] \label{lem:diagonalization} Let $\bar{g} \in {\hat G}(\FF)$ have semisimple part
  $\bar{s}$, and let ${\hat M}$ be a Levi subgroup of ${\hat G}$ such that ${\hat M}_\FF = Z_{G_\FF}(\bar{s})$; note
  that $\bar{g} \in {\hat M}(\FF)$.  Let ${\hat L} \subset {\hat G}$ be a Levi subgroup containing ${\hat M}$.  Let
  $c : {\hat L} \times {\hat G} \rarrow {\hat G}$ be the conjugation map $c(\delta,\gamma) = \gamma \delta \gamma^{-1}$.
  \begin{enumerate}
  \item There is a section
    \[\alpha = \delta \times \gamma: {\hat G}_{\bar{g}}^\w\rarrow {\hat L}_{\bar{g}}^\wedge \times {\hat G}_e^\w\]
    to the completion of $c$ such that the map $\delta : G_{\bar{g}}^\w \rarrow {\hat L}_{\bar{g}}^\w$ is formally smooth.
  \item Suppose that $A \in \Cc_{\Oc}^\wedge$ and that $g \in {\hat L}(A)$ is a lift of $\bar{g}$.  Suppose that $q$ is an
    integer such that $\bar{s}^q$ and $\bar{s}$ are conjugate as elements of ${\hat L}(\FF)$.  Then
    \[ \{h \in {\hat G}(A) : hgh^{-1} = g^q\}  \subset {\hat L}(A).\]
  \end{enumerate}
\end{lemma}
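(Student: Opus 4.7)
The plan is to derive both parts from a single linear-algebra observation. Write $\gf$, $\mathfrak{l}$, $\mathfrak{m}$ for the Lie algebras over $\FF$ of $\hat G$, $\hat L$, $\hat M$. Since $\bar s$ is semisimple, $\Ad(\bar s)$ acts semisimply on $\gf$ with $1$-eigenspace $\mathrm{Lie}(Z_{\hat G_\FF}(\bar s)) = \mathfrak{m} \subset \mathfrak{l}$; and as $\bar u$ commutes with $\bar s$ and $\Ad(\bar u)$ is unipotent, the eigenvalues of $\Ad(\bar g) = \Ad(\bar s)\Ad(\bar u)$ on $\gf$ coincide with those of $\Ad(\bar s)$. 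The induced action of $\Ad(\bar g)$ on $\gf/\mathfrak{l}$ therefore has no $1$-eigenvalue, so $1 - \Ad(\bar g)$ is an automorphism of $\gf/\mathfrak{l}$.

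For part (1), pick any $\FF$-subspace decomposition $\gf = \mathfrak{l} \oplus V$, and using smoothness of $\hat G/\Oc$ choose a formally smooth morphism $\iota\colon V_0^\wedge \to \hat G_e^\wedge$ (with $V$ regarded as affine space over $\Oc$) whose differential at $0$ is the inclusion $V \hookrightarrow \gf$. Consider the composition
\[\Phi\colon \hat L_{\bar g}^\wedge \times V_0^\wedge \xrightarrow{\id \times \iota} \hat L_{\bar g}^\wedge \times \hat G_e^\wedge \xrightarrow{c} \hat G_{\bar g}^\wedge.\]
A direct computation (using right translation to identify tangent spaces with $\gf$) shows that the derivative of $\Phi$ at $(\bar g, 0)$ is the map $(Y, Z) \mapsto Y - (1 - \Ad(\bar g^{-1}))Z$ from $\mathfrak{l} \oplus V$ to $\gf$, and this is bijective by the linear-algebra observation. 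Since both sides of $\Phi$ are formally smooth over $\Oc$ of relative dimension $\dim \hat G$, $\Phi$ is itself an isomorphism of formal schemes. Defining $\alpha = (\id \times \iota) \circ \Phi^{-1}$ gives the required section, and $\delta$ is the composition of $\Phi^{-1}$ with projection to the first factor, hence formally smooth.

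For part (2), I would first work at the $\FF$-level. Let $\bar h$ be the reduction of $h$. From $\bar h \bar g \bar h^{-1} = \bar g^q$, taking semisimple parts gives $\bar h \bar s \bar h^{-1} = \bar s^q$; combined with a choice of $\ell \in \hat L(\FF)$ with $\ell \bar s \ell^{-1} = \bar s^q$ (which exists by hypothesis), $\ell^{-1}\bar h \in Z_{\hat G_\FF}(\bar s) = \hat M_\FF \subset \hat L_\FF$, so $\bar h \in \hat L(\FF)$. Now induct on $n$ to show $h \in \hat L(A/\mf_A^n)$: assuming this at level $n$, lift $h \mod \mf_A^n$ to $\tilde h \in \hat L(A/\mf_A^{n+1})$ and write $h = \tilde h \cdot (1 + \tilde X)$ with $\tilde X \in \gf \otimes \mf_A^n/\mf_A^{n+1}$. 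Expanding $hgh^{-1} = g^q$ modulo $\mf_A^{n+1}$ yields
\[\tilde h g \tilde h^{-1} + \tilde h [\tilde X, \bar g] \tilde h^{-1} = g^q,\]
and since $\tilde h g \tilde h^{-1}, g^q \in \hat L(A/\mf_A^{n+1})$, combined with $\Ad(\tilde h)$-stability of $\mathfrak{l}$, this forces $[\tilde X, \bar g] \in \mathfrak{l}$, i.e., $(1 - \Ad(\bar g))\tilde X \in \mathfrak{l}$. The linear-algebra observation then gives $\tilde X \in \mathfrak{l}$, completing the induction; completeness of $A$ yields $h \in \hat L(A)$.

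The main hurdle will be part (1): mixed characteristic rules out using the exponential map directly, so one must invoke smoothness of $\hat G$ to produce the auxiliary morphism $\iota$. Once that is in place, the remainder reduces to the formal inverse function theorem applied to $\Phi$, on the strength of the linear-algebra observation above.
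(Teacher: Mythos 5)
Your proof is correct, but it takes a genuinely different route from the paper's. The paper constructs the section $\alpha$ explicitly: for a lift $g$ of $\bar{g}$ it factors the characteristic polynomial $P = P_1\cdots P_r$ by Hensel's lemma (the reductions $\bar{P}_i$ being pairwise coprime precisely because $Z_{\hat{G}_\FF}(\bar{s})\subset \hat{L}_\FF$), forms orthogonal idempotents $R_i(g)$ that are \emph{polynomials in $g$}, and takes $\gamma$ to be the resulting change of basis; formal smoothness of $\delta$ is then extracted from an auxiliary criterion about sections (Lemma~\ref{lem:smoothness}) applied to $\delta\circ c$, rather than from a tangent-space computation. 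Part~(2) is then immediate there: since $R_i(g^q)=R_i(g)$, any $h$ conjugating $g$ to $g^q$ commutes with these canonical projectors and so preserves the associated decomposition. You replace all of this by the single observation that $1-\Ad(\bar{g})$ is invertible on $\gf/\mathfrak{l}$, deducing~(1) from the formal inverse function theorem and~(2) by successive approximation modulo powers of $\mf_A$; both your reductions are sound (the base case over $\FF$ via Jordan decomposition, and the identification of the obstruction at each stage with $(1-\Ad(\bar{g}))\tilde{X}$ modulo $\mathfrak{l}$). The trade-off: the paper's idempotents make $\gamma$ canonical and functorial in $g$ and give part~(2) for free, whereas your argument is more conceptual and would transfer to a general smooth reductive group scheme with no mention of characteristic polynomials, at the cost of a choice of complement $V$ and a separate induction for part~(2). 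For the later applications (Corollary~\ref{cor:reduce-to-M} and Section~\ref{sec:unipotent-reduction}) one also wants $\alpha$ to restrict to $(\id,e)$ on $\hat{L}^\wedge_{\bar{g}}$, so that $\delta$ is a retraction; your section does satisfy this, since $\Phi(\delta,0)=\delta$, though it is worth recording explicitly.
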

\begin{proof}
  \begin{enumerate}\item
    We may suppose that ${\hat G} = GL_n$ and that ${\hat L} = GL_{n_1} \times \ldots GL_{n_r}$ for some natural numbers $n_i$.  Let
    \[\bar{g} = \begin{pmatrix} X_1 & & \\ & \ddots & \\ & & X_r\end{pmatrix}\]
    for some matrices $X_i \in GL_{n_i}(\FF)$ with characteristic polynomials $\bar{P}_i$. By the assumption that
    ${\hat M} \subset {\hat L}$, the polynomials $\bar{P}_i$ are pairwise coprime. Let $A \in \Cc_\Oc$ and let $g \in {\hat G}(A)$ be a lift
    of $\bar{g}$.  Let $P$ be the characteristic polynomial of $g$.  By Hensel's lemma, $P$ factorises uniquely as a
    product $P = P_1 \ldots P_r$ with each $P_i$ a monic lift of $\bar{P}_i$.  It follows that for each $i$ we may find a
    monic polynomial $R_i$ such that
    \begin{itemize}
    \item $\prod_{j \neq i} P_j \mid R_i$ and
    \item $R_i \equiv I_{n_i} \mod P_i$.
    \end{itemize}
    The matrices $R_i(g)$ are then an orthogonal system of idempotents, and define a direct sum decomposition of $A^n$
    lying above the decomposition of $\FF^n$ associated to ${\hat L}$.  If
    $e^{(1)}_1, \ldots, e^{(1)}_{n_1}, e^{(2)}_{1}, \ldots, e^{(2)}_{n_2}, \ldots,e^{(r)}_{n_r}, \ldots e^{(r)}_{n_r}$
    is the standard basis of $A^n$ then set $f^{(i)}_j = R_i(g)e^{(i)}_j$.  The basis $(f^{(i)}_j)_{i,j}$ is then a
    basis of $A^n$ lifting the standard basis of $\FF^n$ and with respect to which the action of $g$ is a block
    diagonal.  Letting $\gamma$ be the change of basis matrix from $e_j^{(i)}$ to $f_j^{(i)}$, we have that
    $\gamma \in 1 + M_n(\mf_A)$ and $\gamma^{-1} g \gamma \in {\hat L}(A)$.  This construction is functorial and we
    obtain the morphism
    \begin{align*}
      \alpha : {\hat G}^\wedge_{\bar{s}} &\rarrow {\hat L}^\wedge_{\bar{s}} \times {{\hat G}}^\wedge_{e} \\
      g &\mapsto (\delta = \gamma^{-1} g \gamma, \gamma)
    \end{align*} 
    that is evidently a section of $c$.
  
    Let $\pi : {\hat L}^\wedge_{\bar{s}} \times GL_{n, e}^\wedge\rarrow {\hat L}^{\wedge}_{\bar{s}}$ be the projection so that
    \[\delta = \pi \circ \alpha : {\hat G}^\wedge_{\bar{s}} \rarrow {\hat L}^\wedge_{\bar{s}}.\] We will apply Lemma~\ref{lem:smoothness}
    to the diagram
    \[\begin{tikzcd}
        {{\hat L}^{\wedge}_{\bar{s}} \times {GL}^\wedge_{n,e}} \arrow[r, "c"] \arrow[rd, swap, "\pi"]
        &  {\hat G}^{\wedge}_{\bar{s}}\arrow[l, "\alpha", dotted, bend right=49] \arrow[d, "\delta"] \\
        & {\hat L}^{\wedge}_{\bar{s}}
      \end{tikzcd}\] and deduce that $\delta$ is formally smooth, as required.  To apply Lemma~\ref{lem:smoothness} we
    must show that $\delta \circ c$ is formally smooth.  Following carefully through the construction of $\alpha$, one
    finds that this map is
    \[ \delta \circ c : (g, \gamma) \mapsto \gamma_{\hat L} g\gamma_{\hat L}^{-1}\] where $\gamma_{\hat L}$ is the truncation of $\gamma$ obtained by setting
    all of the matrix entries outside of ${\hat L}$ equal to zero.  This is formally smooth: can write it as a composite 
    \[(g, \gamma) \mapsto (g, \gamma_{\hat L}) \mapsto (\gamma_{\hat L}g\gamma_{\hat L}^{-1},\gamma_{\hat L}) \mapsto \gamma_{\hat L} g \gamma_{\hat L}^{-1}\]
    in which the first and third maps are formally smooth, and the second map is an isomorphism.
  \item In the notation of proof of the previous part, the assumption on $\bar{s}$ implies that $R_i(g^q) = R_i(g)$
    for each $i$.  Then any element $h \in {\hat G}(A)$ such that $h^{-1}gh= g^q$ commutes with the projectors $R_i(g)$.  It
    follows that $h$ preserves the direct sum decomposition of $A^n$ associated to the $R_i(g)$; since $g \in {\hat L}$, this
    is exactly the direct sum composition corresponding to ${\hat L}$, whence $h \in {\hat L}(A)$. \qedhere
  \end{enumerate}
\end{proof}
\subsection{Unipotent deformation rings}
\label{sec:unipotent}

Fix standard topological generators $\sigma, \phi$ of $W_t$.  We say that a representation
$\rhobar : W_t \rarrow \hat{G}(\FF)$ is \emph{inertially unipotent} if $\rhobar(\sigma)$ is unipotent --- this is
independent of the choice of $\sigma$.  For this section, we suppose that $\rhobar : W_t \rarrow \hat{G}(\FF)$ is
inertially unipotent, and that it is $f$-distinguished with $\hat{M}$ an allowable subgroup.

If $\hat{G} = GL_{n, \Oc}$, $\rhobar : W_t \rarrow \hat{G}(\FF)$ is a representation that is
$f$-distinguished, inertially unipotent, and ${\hat M}$ is an allowable Levi subgroup for $\rhobar$, then after
conjugating, we may assume that
\begin{equation}\label{eq:standard}\rhobar(\sigma) = \begin{pmatrix} J_{n_1}(1) & & \\ & \ddots & \\ & & J_{n_r}(1)\end{pmatrix}\end{equation}
where $r, n_1, \ldots, n_r \in \NN$, and that the standard Levi subgroup ${\hat M} = \prod_{i=1}^r GL_{n_i}$ is an
allowable subgroup for $\rhobar$.

\begin{lemma}\label{lem:distinguished-phi-implies-sigma}
  Suppose that $A \in \Cc_{\Oc}$ and that $\rho : W_t \rarrow {\hat G}(A)$ is a lift of $\rhobar$ such that
  $\rho(\phi) \in {\hat M}(A)$.

  Then $\rho(\sigma) \in {\hat M}(A)$.
\end{lemma}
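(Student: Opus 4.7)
The plan is to proceed by induction on the length of the artinian ring $A$. The base case $A=\FF$ holds because $\rhobar(\sigma)\in\hat{M}(\FF)$ by the discrete inertial $\hat{M}$-parameter assumption on the allowable Levi. For the inductive step, take an ideal $I\subset A$ with $I\mf_A=0$, assume the result over $A/I$, choose a lift $m\in\hat{M}(A)$ of $\rho(\sigma)\bmod I$, and write $\rho(\sigma)=m(1+Z)$ with $Z\in M_n(I)$. Decomposing $Z=Z_\|+Z_\perp$ along the Lie-algebra splitting $\mathfrak{g}=\mathfrak{m}\oplus\mathfrak{m}^\perp$ coming from the standard block structure (\ref{eq:standard}) of $\hat{M}$, the goal reduces to showing $Z_\perp=0$.

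Substituting $\rho(\sigma)=m(1+Z)$ into $\rho(\phi^f)\rho(\sigma)\rho(\phi^f)^{-1}=\rho(\sigma)^{q^f}$, linearizing via $I^2=0$, and writing $MmM^{-1}=m^{q^f}(1+W)$ with $M=\rho(\phi^f)\in\hat{M}(A)$ and $W\in\mathfrak{m}(I)$, one obtains
\[MZM^{-1}-\sum_{t=0}^{q^f-1}m^{-t}Zm^t \;=\; -W.\]
Projecting to $\mathfrak{m}^\perp$ eliminates $W$ and, since $I$ is killed by $\mf_A$ and so everything acts through reduction modulo $\mf_A$, reduces the whole problem to the injectivity of the $\FF$-linear operator
\[\Phi(Y)\;=\;\bar{M}Y\bar{M}^{-1}\;-\;\sum_{t=0}^{q^f-1}\bar{m}^{-t}Y\bar{m}^t\]
on $\mathfrak{m}^\perp(\FF)=\bigoplus_{i\ne j}\Hom(V_j(\FF),V_i(\FF))$.

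For the injectivity I would work block by block on $\Hom(V_j(\FF),V_i(\FF))$ for each pair $i\ne j$. The hypothesis $v_l(q^f-1)>v_l(n!)$, combined with the elementary estimate $\lceil\log_l n_i\rceil\le v_l(n!)+1$, forces the order $e_i=l^{\lceil\log_l n_i\rceil}$ of the unipotent $\bar{m}_i$ to divide $q^f-1$. The Weil relation on block $i$, $\bar{M}_i\bar{m}_i\bar{M}_i^{-1}=\bar{m}_i^{q^f}=\bar{m}_i$, then gives $[\bar{M}_i,\bar{m}_i]=0$, and similarly on block $j$. Grouping the sum defining $T(Y)=\sum_{t=0}^{q^f-1}\bar{m}_i^{-t}Y\bar{m}_j^t$ by periods of length $e=\operatorname{lcm}(e_i,e_j)$ rewrites it as $T=k'T_0+\id$, where $k'=(q^f-1)/e$ and $T_0=\sum_{t=0}^{e-1}\bar{m}_i^{-t}(\cdot)\bar{m}_j^t$ commutes with the conjugation $C_{\bar{M}}\colon Y\mapsto\bar{M}_iY\bar{M}_j^{-1}$ and has single generalized eigenvalue $e$; rearranging gives $T=q^f\cdot\id+k'N$ for a nilpotent $N$ commuting with $C_{\bar{M}}$. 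Since $q^f=1$ in $\FF$ (as $l\mid q^f-1$), one gets $\Phi=(C_{\bar{M}}-\id)-k'N$. Finally, the $f$-distinguished hypothesis $Z_{G_\FF}(\rhobar(\phi^f)_s)\subset\hat{M}_\FF$ says precisely that $\operatorname{spec}(\bar{M}_i)\cap\operatorname{spec}(\bar{M}_j)=\emptyset$, which makes $C_{\bar{M}}-\id$ invertible on $\Hom(V_j,V_i)$; adding the commuting nilpotent $-k'N$ preserves invertibility, so $\Phi$ is invertible and $Z_\perp=0$.

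The main obstacle will be this injectivity of $\Phi$: the three ingredients---the divisibility of the unipotent order by $q^f-1$, the vanishing $q^f-1\equiv0$ in $\FF$, and the disjointness of spectra from $f$-distinguished---must line up correctly, and the non-diagonalizability of $T_0$ forces one to pass through the Jordan-type decomposition $T_0=e\cdot\id+N$ rather than a direct eigenvalue computation.
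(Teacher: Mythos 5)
Your proposal is correct. It rests on exactly the same two inputs as the paper's argument --- the largeness of $f$ (to control the discrepancy between $\sigma$ and $\sigma^{q^f}$ on the unipotent part) and the $f$-distinguished condition (to separate the blocks using $\rho(\phi^f)$) --- but the packaging is genuinely different. The paper avoids induction and linearization entirely: it lets $I\subset\mf_A$ be the ideal generated by the off-block entries of $\Sigma=\rho(\sigma)$, expands $(1+N)^{q^f}$ binomially to get $(\Sigma^{q^f})_{ij}\equiv\Sigma_{ij}\bmod \mf_A I$, feeds this into $\Phi_i\Sigma_{ij}\equiv\Sigma_{ij}\Phi_j$, and kills $\Sigma_{ij}$ by applying the characteristic polynomial $P_j$ (invertible at $\Phi_i$, zero at $\Phi_j$ by Cayley--Hamilton) followed by Nakayama. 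Your version is the ``graded pieces'' incarnation of the same idea: induction on length reduces everything to the injectivity of a single $\FF$-linear operator on $\mf^{\perp}(\FF)$, where your invertibility of $C_{\bar M}-\id$ via disjointness of spectra is the linearized form of the paper's $P_j(\Phi_i)$ being invertible, and your observation that the order of $\bar m_i$ divides $q^f-1$ (so that $\sum_{t=0}^{q^f-1}\bar m_i^{-t}(\cdot)\bar m_j^{t}=\id+(\text{nilpotent})$) replaces the paper's binomial estimate $\binom{q^f}{i}\in\mf_A$. Your route costs more bookkeeping (the period decomposition of $T$, the commuting-nilpotent perturbation argument), but it isolates cleanly \emph{which} operator must be injective and why, which makes the role of each hypothesis more transparent; the paper's Nakayama argument is shorter because it never has to diagonalize the problem. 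All the individual steps you flag check out: $e_i\mid q^f-1$ does follow from $v_l(q^f-1)>v_l(n!)$ via $l^{\lceil\log_l n_i\rceil-1}\le n$, the operators $L_{\bar m_i^{-t}}R_{\bar m_j^{t}}$ are commuting unipotents so $T_0$ is nilpotent in characteristic $l$, and invertible plus commuting nilpotent is invertible.
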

\begin{proof}
  This is similar to Lemma~7.9 of \cite{shotton-gln}.  We may and do assume that $\hat{G} = GL_n$ and that $\rhobar$
  and $\hat{M}$ have the form given by equation~(\ref{eq:standard}).  Write $\Sigma = \rho(\sigma)$ and $\Phi = \rho(\phi)$.  By our
  assumptions, we have
  \[\Phi^f = \begin{pmatrix} \Phi_1 & & \\ & \ddots & \\ & & \Phi_{r}\end{pmatrix}\]
  is block diagonal with $\Phi_i \in GL_{n_i}(A)$ for each $i$.  We write
  \[\Sigma = \begin{pmatrix} \Sigma_{11} & \Sigma_{12} & \ldots \\ \Sigma_{21} & \Sigma_{22} & \ldots \\ \vdots & \vdots
      & \ddots \\ \\ \ldots & \Sigma_{r(r-1)} & \Sigma_{rr}\end{pmatrix}\]
  for $\Sigma_{ij} \in M_{n_i \times n_j}(A)$.  Let $I \subset \mf_A$ be the ideal generated by all the entries of all
  $\Sigma_{ij}$ with $i \neq j$.

  We write $\Sigma = 1 + N$ for $N \in M_n(A)$ a lift of a nilpotent matrix.  Then we have
  \begin{align*}
    \Sigma^{q^f} & = (1 + N)^{q^f} \\
    &= 1 + q^fN + \sum_{i=2}^{q^f} \binom{q^f}{i} N^i. 
  \end{align*}
  By the assumption that $f$ is large enough for $\hat{G}$, we have $q^f \equiv 1 \mod \mf_A$ and
  $\binom{q^f}{i} \in \mf_A$ for $1 \leq i \leq n$; by assumption on $\rhobar(\sigma)$ we have
  $N^n \equiv 0 \mod \mf_A$.  We therefore obtain, for each $1 \leq i, j \leq r$, that
  \[(\Sigma^{q^f})_{ij} \equiv \Sigma_{ij} \mod \mf_A I.\]
  However, from the equation $\Phi^f\Sigma = \Sigma^{q^f}\Phi^f$ we get
  \begin{align*}
    \Phi_i \Sigma_{ij} &= (\Sigma^{q^f})_{ij}\Phi_j \\
    &\equiv \Sigma_{ij} \Phi_j \mod \mf_A I.
  \end{align*}
  It follows that \[P(\Phi_i) \Sigma_{ij}\equiv \Sigma_{ij}P(\Phi_j) \mod \mf_AI\] for any polynomial $P \in A[X]$.  If $P_i$ is the
  characteristic polynomial of $\Phi_i$ then, by the assumption that $\rhobar$ is $f$-distinguished, $P_i$ and
  $P_j$ are coprime.  Therefore $P_j(\Phi_i)$ is invertible.  But
  \begin{align*}
    P_j(\Phi_i) \Sigma_{ij} &\equiv \Sigma_{ij}P_j(\Phi_j) \\
    &= 0 \mod \mf_A I
  \end{align*}
  by the Cayley--Hamilton theorem and so $\Sigma_{ij} \equiv 0 \mod \mf_AI$.  As this holds for all $i \neq j$, we see
  that $I \subset \mf_A I$.  By Nakayama's lemma, $I = 0$, so that $\Sigma_{ij} = 0$ for all
  $i \neq j$.  Thus $\Sigma \in {\hat M}(A)$, as required.
\end{proof}

\begin{corollary}\label{cor:reduce-to-M} There is a formally smooth retraction
  \[X_{\rhobar}^{\hat G} \rarrow X_{\rhobar}^{\hat M}.\]
  By retraction, we mean that a left inverse to the natural inclusion $X_{\rhobar}^{\hat M} \rarrow X_{\rhobar}^{\hat G}$.
\end{corollary}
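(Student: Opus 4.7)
My plan is to construct the retraction $r : X^{\hat G}_{\rhobar} \to X^{\hat M}_{\rhobar}$ using the conjugation supplied by the Diagonalization Lemma (Lemma~\ref{lem:diagonalization}), and then to deduce formal smoothness by identifying $r$ as a base change of the formally smooth morphism $\delta$ from that lemma. To set this up, let $\bar g = \rhobar(\phi^f) \in \hat M(\FF)$ with semisimple part $\bar s$. The $f$-distinguished hypothesis gives $Z_{\hat G_\FF}(\bar s) \subset \hat M_\FF$, so Lemma~\ref{lem:diagonalization} applies with its ``$\hat M$'' a Levi lifting $Z_{\hat G_\FF}(\bar s)$ and contained in our $\hat M$, and its ``$\hat L$'' equal to our $\hat M$, producing a formally smooth $\delta : \hat G^\wedge_{\bar g} \to \hat M^\wedge_{\bar g}$ and $\gamma : \hat G^\wedge_{\bar g} \to \hat G^\wedge_e$ with $\gamma(g)^{-1} g \gamma(g) = \delta(g)$.

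For $\rho \in X^{\hat G}_{\rhobar}(A)$ I define $r(\rho)(w) \defeq \gamma(\rho(\phi^f))^{-1} \rho(w) \gamma(\rho(\phi^f))$. Then $r(\rho)(\phi^f) = \delta(\rho(\phi^f)) \in \hat M(A)$, and since $\phi$ commutes with $\phi^f$ the element $r(\rho)(\phi)$ conjugates $r(\rho)(\phi^f)$ to itself; applying Lemma~\ref{lem:diagonalization}(2) with its ``$q$'' equal to $1$ forces $r(\rho)(\phi) \in \hat M(A)$, and Lemma~\ref{lem:distinguished-phi-implies-sigma} then gives $r(\rho)(\sigma) \in \hat M(A)$, so $r(\rho) \in X^{\hat M}_{\rhobar}(A)$. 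When $\rho$ already factors through $\hat M$, the lift $g = \rho(\phi^f)$ is $\hat M$-block diagonal, so the idempotents $R_i(g)$ in the proof of Lemma~\ref{lem:diagonalization}(1) become the standard $\hat M$-block projectors, $\gamma(g) = 1$, and $r(\rho) = \rho$; hence $r$ is a retraction.

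For formal smoothness, I claim the square
\[\begin{tikzcd}
X^{\hat G}_{\rhobar} \arrow[r, "r"] \arrow[d, "\Phi^f"'] & X^{\hat M}_{\rhobar} \arrow[d, "\Phi^f_M"] \\
\hat G^\wedge_{\bar g} \arrow[r, "\delta"'] & \hat M^\wedge_{\bar g}
\end{tikzcd}\]
with vertical maps $\rho \mapsto \rho(\phi^f)$ is Cartesian in $\FS_\Oc^\w$. Commutativity is immediate from $r(\rho)(\phi^f) = \delta(\rho(\phi^f))$. For the fibre-product identification, given $(\rho_M, g)$ with $\rho_M(\phi^f) = \delta(g)$, the element $\rho(w) \defeq \gamma(g) \rho_M(w) \gamma(g)^{-1}$ of $X^{\hat G}_{\rhobar}(A)$ satisfies $\rho(\phi^f) = \gamma(g) \delta(g) \gamma(g)^{-1} = g$ and therefore $r(\rho) = \gamma(g)^{-1} \rho \gamma(g) = \rho_M$; uniqueness holds because any such $\rho$ must equal $\gamma(g) \rho_M \gamma(g)^{-1}$. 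Since $\delta$ is formally smooth by Lemma~\ref{lem:diagonalization}(1), so is its base change $r$. The main hurdle is this Cartesian verification, which relies on $\gamma$ being a genuine morphism of formal schemes---not a choice up to $\hat M$-conjugacy---so that $r(\rho)$ and $\rho(\phi^f)$ recover $\rho$ uniquely.
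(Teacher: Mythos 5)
Your proof is correct and takes essentially the same route as the paper: conjugate by the morphism $\gamma$ of Lemma~\ref{lem:diagonalization} evaluated on a lift of $\rhobar(\phi^f)$, and then invoke Lemma~\ref{lem:distinguished-phi-implies-sigma} to see that the image lies in $X^{\hat M}_{\rhobar}$. Your Cartesian-square argument simply makes explicit the formal smoothness that the paper asserts follows from Lemma~\ref{lem:diagonalization}(1), and your use of part (2) with $q=1$ to place $\rho(\phi)$ in $\hat M(A)$ is a harmless variant of the paper's direct passage to the locus $\rho(\phi)\in\hat M$.
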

\begin{proof} Let $X_{\rhobar}^{\Phi \in {\hat M}} \subset X_{\rhobar}^{\hat G}$ be the closed sub-formal scheme on
  which $\rho(\phi) \in {\hat M}$.  It follows from Lemma~\ref{lem:diagonalization} part~(1), and the assumption that
  $\rhobar$ is $f$-distinguished with ${\hat M}$ an allowable subgroup, that there is a retraction
  $X_{\rhobar}^{\hat G} \rarrow X_{\rhobar}^{\Phi \in {\hat M}}$.  But Lemma~\ref{lem:distinguished-phi-implies-sigma}
  shows that the inclusion $X_{\rhobar}^{\hat M} \subset X_{\rhobar}^{\Phi \in {\hat M}}$ is actually an equality, and
  the corollary follows.
\end{proof}

In what follows, we denote by $\bar{e}$ the identity point of $\hat{T}(\FF)$, and use the same notation for the
corresponding points of $\hat{T}/W_{\hat{M}}$, $\Sc^{\hat{M}}$, and so on.  Let $S^{\hat{M}}_{\bar{e}}$ be the
completion of $\Sc^{\hat{M}}(q)$ at $\bar{e}$, and for $Z$ any of $\hat{T}$, $\hat{T}/W_{\hat{M}}$ or
    $(\hat{T}/W_{\hat M})^q$ let $Z_{\bar{e}}$ be the completion of $Z$ at $\bar{e}$ (this is perhaps a slight abuse of
    notation).

\begin{theorem}\label{thm:regular-unipotent} Recall our running assumptions that $\rhobar$ is inertially unipotent and
  $f$-distinguished with allowable subgroup $\hat{M}$.

  The map
  \[\ch_{I} : X_{\rhobar}^{\hat M} \rarrow S^{\hat{M}}_1\]
  is formally smooth.
\end{theorem}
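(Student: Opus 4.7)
The plan is to exploit the regularity of $\rhobar(\sigma)$ on each block of $\hat M$: near a regular element, the characteristic polynomial map is smooth, and the datum of the conjugator $\Phi$ is controlled by a smooth torsor under the centralizer. This converts the statement into two standard smoothness assertions.

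First I would decompose into blocks. Since $\hat M = \prod_{i=1}^r GL_{n_i}$ and $\rhobar$ is block-diagonal with $\rhobar(\sigma)$ in the form~(\ref{eq:standard}), the framed deformation ring, the space $\Sc^{\hat M}(q)$, and the morphism $\ch_I$ all split as products over the blocks. It therefore suffices to treat the case $\hat M = GL_n$ with $\rhobar(\sigma) = J_n(1)$. Fix standard generators $\sigma, \phi$ of $W_t$. By the proof of Proposition~\ref{prop:representable}, $X^{GL_n}_\rhobar$ is canonically identified with the formal completion at $\rhobar$ of
\[Y = \{(\Sigma,\Phi) \in GL_n \times GL_n : \Phi\Sigma\Phi^{-1} = \Sigma^q\},\]
while $S^{GL_n}_1$ is the formal completion of $(\hat T/W)^q$ at $\bar e$, and $\ch_I$ is induced by $(\Sigma,\Phi) \mapsto \ch(\Sigma)$.

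Next I would factor $\ch_I$ through the regular locus. Let $GL_n^{\mathrm{reg}} \subset GL_n$ denote the open regular locus and set $GL_n^{\mathrm{reg},q} = GL_n^{\mathrm{reg}} \times_{\hat T/W} (\hat T/W)^q$; note that $J_n(1) \in GL_n^{\mathrm{reg},q}(\FF)$. The factorisation is
\[X^{GL_n}_\rhobar \xrightarrow{\pi_1} (GL_n^{\mathrm{reg},q})^\wedge_{J_n(1)} \xrightarrow{\ch} S^{GL_n}_1,\]
where $\pi_1(\Sigma,\Phi) = \Sigma$ lands in $GL_n^{\mathrm{reg},q}$ because $\Phi\Sigma\Phi^{-1} = \Sigma^q$ forces $\ch(\Sigma) \in (\hat T/W)^q$. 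I would then establish formal smoothness of each factor. The second is formally smooth because $\ch : GL_n^{\mathrm{reg}} \to \hat T/W$ is smooth of relative dimension $n^2 - n$ (a classical fact, which holds over $\Oc$) and smoothness is preserved under base change to $(\hat T/W)^q$. The first is formally smooth because $Y \to GL_n^{\mathrm{reg},q}$ is a right torsor under the universal centralizer group scheme $Z \to GL_n^{\mathrm{reg},q}$ acting by $(\Sigma,\Phi)\cdot z = (\Sigma,\Phi z)$; the fibre of $Z$ over $\Sigma$ is $(\Oc[\Sigma])^\times$, and since $\Oc[\Sigma]$ is locally free of rank $n$ on the regular locus, $Z$ is a smooth affine group scheme of relative dimension $n$, so any $Z$-torsor is étale-locally trivial, hence smooth.

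Composing yields formal smoothness of $\ch_I$, of relative dimension $(n^2 - n) + n = n^2$, matching $\dim X^{GL_n}_\rhobar - \dim S^{GL_n}_1 = (n^2 + 1) - 1$. The main obstacle is verifying the two smoothness assertions carefully over $\Oc$: the smoothness of the adjoint quotient on the regular locus reduces to a Jacobian computation on $GL_n/\Oc$, and the smoothness of the universal regular centralizer follows from the local-freeness of $\Oc[\Sigma]$ on the regular locus, but both need to be checked in the $\Oc$-flat setting rather than just over a field.
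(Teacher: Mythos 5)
Your route is genuinely different from the paper's and, properly fleshed out, it works; but as written it buries the one step that carries the real content. The paper does not work over $(\hat{T}/W)^q$ directly: it passes to the finite flat cover $Z = \hat{T}_{\bar e}\times_{(\hat{T}/W)_{\bar e}}(\hat{T}/W)^q_{\bar e}$ of ordered eigenvalues, puts $\Sigma$ into a companion-like bidiagonal form (your "cyclic vector" trivialisation, made explicit via $f_1=e_1$, $f_{i+1}=(\Sigma-a_i)f_i$), shows that the space of valid $\Phi$'s over a fixed such $\Sigma$ is isomorphic to $(\AA^n)^\wedge$ via $\Phi\mapsto\Phi(e_1)$, and then descends formal smoothness along the finite flat map $Z\rarrow(\hat{T}/W)^q_{\bar e}$. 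Your factorisation through $(GL_n^{\mathrm{reg},q})^\wedge_{J_n(1)}$ avoids the cover and the descent step, and isolates two clean general facts (smoothness of $\ch$ on the regular locus, smoothness of the universal regular centralizer), which is conceptually tidier and makes the dimension count transparent.

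The gap is in the sentence "$Y \rarrow GL_n^{\mathrm{reg},q}$ is a right torsor under $Z$ \ldots so any $Z$-torsor is \'etale-locally trivial, hence smooth." What you have for free is only a \emph{pseudo}-torsor: the centralizer acts simply transitively on each nonempty fibre, but nothing yet guarantees that the map is surjective and flat (equivalently, locally trivial), and a pseudo-torsor under a smooth group scheme need not be smooth. This is exactly where the hypothesis $\ch(\Sigma)=\ch(\Sigma^q)$ must enter, and it is the analogue of the paper's Claim 2 (where the identity $\ch_\Sigma(\Sigma^q)=\ch_{\Sigma^q}(\Sigma^q)=0$ is what makes the inductively defined $\Phi$ actually intertwine $\Sigma$ with $\Sigma^q$). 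To close it, argue that Zariski-locally on $GL_n^{\mathrm{reg},q}$ (or at least on a neighbourhood of $J_n(1)$, which suffices after completion) both $\Sigma$ and $\Sigma^q$ admit cyclic vectors and hence are each conjugate to the companion matrix of their common characteristic polynomial, so that a local section $\Phi$ exists; this requires the observation that $\Sigma^q$ is again regular, which uses that $q$ is invertible in $\FF$ since $l\neq p$. With local sections in hand the pseudo-torsor is a Zariski-locally trivial torsor and your smoothness conclusion follows. The remaining assertions you flag (smoothness of $\ch$ on $GL_n^{\mathrm{reg}}$ over $\Oc$, and that the commutant of a regular element is the locally free rank-$n$ algebra it generates) are indeed standard and can be checked by miracle flatness and a fibrewise-dimension argument, respectively.
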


\begin{proof} This is an elaboration of the proof of \cite[Proposition 7.10]{shotton-gln}, an argument which is also
  used in \cite[Section 5]{helm2016curtis}.

  We can and do immediately reduce to the case that ${\hat M} = GL_n$.  Then $\rhobar(\sigma)$ is a regular
  unipotent element of ${\hat M}(\FF)$ and we conjugate so that it is equal to the Jordan block $J_n(1)$.

  Let $\hat{T}$ be a split maximal torus in $\hat{M}$.  Our chosen generator $\sigma \in I_t$ identifies
  $S^{\hat M}_{\bar{e}}$ with the $q$-fixed points $(\hat{T}/W_{\hat M})_{\bar{e}}^q$.  Let
  \[Z = \hat{T}_{\bar e} \times_{(\hat{T}/W_{\hat{M}})_{\bar e}} (\hat{T}/W_{\hat M})_{\bar e}^q.\] For $A \in \Cc_\Oc$,
  an $A$-point of $Z$ is the same as a tuple $(t_1, \ldots, t_n)$ of elements of $1 + \mf_A$ such that
  \[\prod_{i=1}^n (X - t_i) = \prod_{i=1}^n (X - t_i^q).\]
  Let $Y$ be the closed formal subscheme of $X^{\hat M}_{\rhobar}$ whose $A$-points are lifts $\rho$ of $\rhobar$ for which
    \[\rhobar(\sigma) =
      \begin{pmatrix}
        a_1 & 1 & 0 & 0 & \ldots \\
        0 & a_2 & 1 & 0 &  \ldots \\
        0 & 0 & a_3 & 1 & \ldots \\
        \vdots & \vdots & \ddots & \ddots & \ddots
      \end{pmatrix}\] for some $a_1, \ldots, a_n \in 1 + \mf_A$.  Then there is a morphism
    \[ Y \rarrow \hat{T}\] taking $\rho$ to $(a_1, \ldots, a_n)$.  Since $\rho(\sigma)$ is conjugate to $\rho(\sigma)^q$, we
    see that this map actually factors through a map $\delta : Y \rarrow Z$.  The diagram
    \[
      \begin{CD}
        Y @>>> X^{\hat G}_{\rhobar} \\
        @VVV @VVV \\
        Z @>>> S^{\hat{M}}_{\bar e}
      \end{CD}\]
    commutes and so we have a morphism $f : Y \rarrow Z \times_{(\hat{T}/W_{\hat M})^q_{\bar e}} X_{\rhobar}^{\hat M}$.  Now I claim:
    \begin{enumerate}
    \item There is a formally smooth morphism of $Z$-formal schemes
      \[s : X^{\hat M}_{\rhobar} \times_{(\hat{T}/W_{\hat M})^q_{\bar e}} Z \rarrow Y.\]
    \item The morphism $\delta : Y \rarrow Z$ is formally smooth.  
    \end{enumerate}
    It follows from these claims, proved below, that the map $\ch_I : X^{\hat M}_{\rhobar} \rarrow (\hat{T}/W_{\hat
      M})^q_{\bar e}$ is formally smooth after
    base change to $Z$.  Since $Z \rarrow (\hat{T}/W_{\hat M})^q_{\bar e}$ is finite flat, this implies (by \cite[Corollaire
    0.19.4.6]{EGA}) that $X^{\hat M}_{\rhobar} \rarrow (\hat{T}/W_{\hat M})^q_{\bar e}$ is formally smooth as required.
\end{proof}

    \begin{proof}[Proof of claim 1] 
      Let $\Pc$ be the completion at the identity of the subgroup $P$ of $\hat{M} = GL_n$ consisting of matrices whose
      first column is $(1, 0, \ldots, 0)^t$.  We have a morphism
      \[\alpha :  Y \times \Pc \rarrow X_{\rhobar}\times_{(\hat{T}/W_{\hat M})^q_{\bar e}} Z\]
      defined by
      \[ \alpha : (\rho, \gamma) \mapsto (\gamma \rho \gamma^{-1}, \delta(\rho)).\]
      We show now that it is an isomorphism.  Define a morphism
      \[ \beta : X_{\rhobar}\times_{(T/W_{\hat M})^q_{\bar e}} Z \rarrow Y \times \Pc\] on $A$-points as follows: suppose given
      an $A$-point $(\rho, (t_1, \ldots, t_n))$ of $(X_{\rhobar}\times_{(T/W_{\hat M})^q_{\bar e}} Z)$; then
      $(T - a_1)(\ldots)(T - a_n) = \ch_{\rho(\sigma)}(T)$. Let $e_1, \ldots, e_n$ be the standard basis for $A^n$ and
      let $f_1, \ldots, f_n$ be defined recursively by:
      \begin{enumerate}
      \item $f_1 = e_1$;
      \item $f_{i + 1} = (\rho(\sigma) - a_i) f_i$.
      \end{enumerate}
      Let $\gamma$ be the matrix (with respect to the standard basis) such that $\gamma(e_i) = f_i$.  Then $\gamma$
      defines a point of $\Pc(A)$, as $f_1 = e_1$ and, by assumption on $\rhobar$, $f_i \equiv e_i \mod \mf_A$.  Note
      that
      \[\rho(\sigma)(f_i) = f_{i+1} + a_if_i\]
      for $1 \leq i \leq n-1$, and
      \begin{align*}\rho(\sigma)f_n &= a_nf_n + (\rho(\sigma) - a_n)f_n \\
                                    &= a_nf_n + \prod_{i=1}^n (\rho(\sigma) - a_n)f_n \\
        &= a_n f_n
     \end{align*}
     by the Cayley--Hamilton theorem and the assumption on $(a_1, \ldots, a_n)$.  It follows that
     $\gamma^{-1} \rho \gamma$ defines an $A$-point of $Y$ lying above the $A$-point $(a_1, \ldots, a_n)$ of
     $Z$.

     We therefore define
     \[\beta\left( \rho, (a_1, \ldots, a_n)\right) = (\gamma^{-1} \rho \gamma, \gamma).\]
     We evidently have $\alpha \circ \beta = \id$, and one checks directly from the constructions that
     $\beta \circ \alpha = \id$.  So $\alpha$ and $\beta$ are isomorphisms, as required.  The map $s$ of claim (1) is
     then just the composition of $\beta$ with projection to $Y$.
    \end{proof}

    \begin{proof}[Proof of claim 2]
      Let $Y \rarrow Z \times (\AA^n)_{\bar{e}_1}^\wedge$ be the morphism $\rho \mapsto (\delta(\rho), \rho(\phi)(e_1))$.
      I claim that this is an isomorphism.  To see injectivity (at the level of $A$-points), note that for $i \geq 2$ we
      can recover $\rho(\phi)(e_i)$ inductively from the formula
      \begin{align*}\rho(\phi)(e_{i+1}) &= \rho(\phi)(\rho(\sigma) - a_i)(e_i) \\
                                        &= (\rho(\sigma)^q - a_i)\rho(\phi)(e_i).
      \end{align*} 
      For surjectivity, note that the above inductive formula certainly determines a lift $\Phi$ of $\rhobar(\phi)$ with
      given $\Phi(e_1)$, and we have only to check that $\Phi \rho(\sigma) = \rho(\sigma)^q \Phi$ holds.  For $i < n$,
      we have
      \begin{align*}\Phi \rho(\sigma)(e_i) &= \Phi (a_i e_i + e_{i+1}) \\
                                           &= \Phi(a_ie_i) + (\rho(\sigma)^q - a_i) \Phi (e_i) \\
        &= \rho(\sigma)^q \Phi(e_i)
          \end{align*}
    as required.  For $i = n$, note that (writing $\Sigma = \rho(\sigma)$)
    \begin{align*}
      (\rho(\sigma^q) - a_n)\Phi(e_n) 
      &= (\Sigma^q - a_n)(\Sigma^q - a_{n-1}) \Phi (e_{n-1})\\
      &= \ldots \\
      &= (\Sigma^q - a_n)(\Sigma^q - a_{n-1})( \ldots)(\Sigma^q - a_1) \Phi(e_1) \\
      &= \ch_{\Sigma}(\Sigma^q)\Phi(e_1) \\ 
      &= \ch_{\Sigma^q}(\Sigma^q)\Phi(e_1) \\\intertext{(by our assumption on $(a_1, \ldots, a_n)$)}
      &= 0.
    \end{align*}It follows that
    \[\Phi \Sigma (e_n) = \Phi (a_ne_n) = \Sigma^q \Phi (e_n),\]
    as required.
    \end{proof}

    \begin{corollary} \label{cor:unipotent-case}
      Let $\rhobar$ and ${\hat M}$ be as above.  Then there is a formally smooth morphism
      \[X_{\rhobar}^{\hat G} \rarrow S^{\hat M}_{\bar e}\] whose composition with the inclusion
      $X_{\rhobar}^{\hat M} \into X_{\rhobar}^{\hat G}$ is $\ch_I$.
    \end{corollary}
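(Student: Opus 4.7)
The plan is to simply compose the two main results already established in this section. Corollary~\ref{cor:reduce-to-M} gives a formally smooth retraction $r : X_{\rhobar}^{\hat G} \to X_{\rhobar}^{\hat M}$, meaning a left inverse to the inclusion $i : X_{\rhobar}^{\hat M} \hookrightarrow X_{\rhobar}^{\hat G}$, so $r \circ i = \id$. Theorem~\ref{thm:regular-unipotent} gives that $\ch_I : X_{\rhobar}^{\hat M} \to S^{\hat M}_{\bar e}$ is formally smooth.

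I would therefore define $\pi := \ch_I \circ r : X_{\rhobar}^{\hat G} \to S^{\hat M}_{\bar e}$. Since formally smooth morphisms are closed under composition, $\pi$ is formally smooth. The required commutativity is then immediate:
\[
\pi \circ i \;=\; \ch_I \circ r \circ i \;=\; \ch_I \circ \id_{X_{\rhobar}^{\hat M}} \;=\; \ch_I.
\]

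There is no real obstacle here — all the substantive work has already been done in Corollary~\ref{cor:reduce-to-M} (which in turn used the Diagonalization Lemma~\ref{lem:diagonalization} and Lemma~\ref{lem:distinguished-phi-implies-sigma} about $f$-distinguished parameters) and in Theorem~\ref{thm:regular-unipotent} (the regular unipotent computation via the basis-change trick). The corollary is just the packaging of these two formal smoothness statements into a single map to $S^{\hat M}_{\bar e}$.
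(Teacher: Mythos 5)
Your proposal is correct and is exactly the paper's argument: the paper's proof reads ``Immediate from Corollary~\ref{cor:reduce-to-M} and Theorem~\ref{thm:regular-unipotent},'' and the only way to combine those two statements is the composition $\ch_I \circ r$ that you write down. The observation that formal smoothness is stable under composition and that $r \circ i = \id$ is precisely what makes it immediate.
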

    \begin{proof}
      Immediate from Corollary~\ref{cor:reduce-to-M} and Theorem~\ref{thm:regular-unipotent}.
    \end{proof}

\subsection{Reduction to the unipotent case}
\label{sec:unipotent-reduction}

We explain how to deduce Theorem~\ref{thm:def-rings} from the inertially unipotent case
(Corollary~\ref{cor:unipotent-case}).  The argument is essentially that of \cite{ClozelHarrisTaylor2008-Automorphy}
Corollary~2.13 and \cite{MR3622123} Proposition~2.6, albeit phrased slightly differently.

Fix standard topological generators $\sigma, \phi$ of $W_t$.  Suppose that ${\hat G}$ is as above, that ${\hat M}$ is a
Levi subgroup containing a split maximal torus $\hat{T}$, and that $f$ is large enough for $\hat{G}$.  Let
$n = \rk(\hat{G})$. We impose the following assumption on the $l$-adic coefficient system $(E, \Oc, \FF)$:
\begin{equation}
  \label{eq:sufflarge} 
  \text{$E$ contains the $(q^{n!}-1)$th roots of unity.}
\end{equation}

Suppose that
$\rhobar : W_t \rarrow {\hat G}(\FF)$ is $f$-distinguished with allowable subgroup ${\hat M}$.  Write
$\rhobar|_{I_t} = \tau_s \tau_u$ with $\tau_s$ semisimple and $\tau_u$ unipotent.  Up to conjugation, using the
assumption~\eqref{eq:sufflarge}, we may and do assume that $\tau_s$ has image in
${\hat T}(\FF)$.  Let $\tilde{\tau}_s$ be the unique lift of $\tau_s$ to ${\hat T}(\Oc)$ having order coprime to $l$.

First, we reduce to the case that the eigenvalues of $\tau_s(\sigma)$ form a single orbit under the $q$-power map.  Let
\[{\hat L}_0 = \{g \in {\hat G} : g \tilde{\tau}_s g^{-1} = \tilde{\tau}_s^{q^i} \text{for some $i \in \NN$}\},\] so
that ${\hat L}_0 = Z_{\hat G}(\tilde{\tau}_s) \rtimes \langle w \rangle$ for some element $w$ of the Weyl group $W$.
Finally, let
\[ {\hat L} = Z_{\hat G}(Z({\hat L}_0)),\] a Levi subgroup of ${\hat G}$.  Then certainly
$Z_{\hat G}(\tilde{\tau}_s) \subset {\hat L}$. By Lemma~\ref{lem:diagonalization}~(1), there is a morphism
$\gamma:{\hat G}_{\rhobar(\sigma)}^\wedge \rarrow {\hat G}_e^\w$ such that conjugating by $\gamma(\rho(\sigma))$ defines a
formally smooth morphism
\begin{align*}X_{\rhobar}^{\hat G} &\rarrow X_{\rhobar}^{\sigma \in {\hat L}} \\
  \rho &\mapsto \gamma(\rho(\sigma))^{-1} \rho \gamma(\rho(\sigma))
\end{align*}
where the space on the right is the closed formal subscheme of $X^{\hat{G}}_{\rhobar}$ on which
$\rho(\sigma) \subset {\hat L}$ (which is clearly independent of the choice of $\sigma$). By part (2) of the same Lemma,
\[X_{\rhobar}^{\sigma \in {\hat L}} = X_{\rhobar}^{\hat L}.\] It is therefore enough to prove Theorem~2.13 with
${\hat G}$ replaced by ${\hat L}$; note that $\rhobar$ is still $f$-distinguished as a representation valued in
${\hat L}$.  Since ${\hat L}$ is a product of general linear groups, it in fact suffices to prove Theorem~2.13 in the
case that ${\hat G} = {\hat L} = GL_n$ for some $n$.  Then we have that $Z({\hat L}_0) = Z({\hat G})$, which happens if
and only if the eigenvalues of $\tau_s(\sigma)$ form a single orbit under the $q$-power map (for any $\sigma$).  So, up
to conjugating $\rhobar$, we may assume that $n = rd$ for some integers $r$ and $d$, where $d$ is the smallest
natural number with $\tau_s^{q^d} = \tau_s$, and that
\begin{equation}\tau = \diag(\tau_r, \tau_r^{q}, \ldots, \tau_r^{q^{d-1}})\label{eq:tau-std}\end{equation}
for some homomorphism $\tau_r : I_t \rarrow GL_r(\FF)$ with scalar semisimplification.  From now on we assume $\tau$ has
this form.  We also regard $GL_r$ as being embedded in $GL_n$ in the `top left corner'.

Let $W_t^{(d)}$ be the subgroup of $W_t$ generated by $I_t$ and $\phi^d$.  Our next step is to show that deforming
$\rhobar$ is the same as deforming the `top-left part' of the restriction to $W_t^{(d)}$.

Let
\[{\hat N} = Z_{\hat G}(\tilde{\tau}_s).\] Then $\hat{N}$ is the standard Levi subgroup with block sizes
$(r, r, \ldots, r)$.  Let $\pi : {\hat N} \rarrow GL_r$ be the map that forgets the entries outside of the first copy of
$GL_r \subset {\hat N}$.  Choose $w \in W$ such that $\tau_s^{q} = w\tau_s w^{-1}$ and such that $w^d = e$.  Specifically, with the above form of
$\tau$ we can take $w$ to be the block matrix (with $r \times r$ blocks)
\[w =
  \begin{pmatrix} 0 & I & 0 & \ldots & 0 \\ 0 & 0 & I & \ldots & 0 \\ \vdots & \vdots & \vdots & \ddots
    & \vdots \\
    0 & 0 & 0 & \ldots & I \\
    I & 0 & 0 & \ldots & 0
  \end{pmatrix}.\] Then $\rhobar(W_t^{(d)}) \subset {\hat N}(\FF)$.  Let
$X_{\rhobar}^{\sigma \in {\hat N}} \subset X^{\hat G}_{\rhobar}$ be the closed formal subscheme on which
$\rho(\sigma) \subset {\hat N}$ (this is clearly independent of the choice of $\sigma$).  Then
Lemma~\ref{lem:diagonalization} implies that there is a formally smooth retraction
\[X_{\rhobar}^{\hat G} \rarrow X_{\rhobar}^{\sigma \in {\hat N}}\] to the natural inclusion, and that $\rho(\phi) \in w{\hat N}$ for all
$X_{\rhobar}^{\hat G}$.  If $\rho : W_t \rarrow {\hat N}(A) \rtimes \langle w \rangle$ is a continuous representation, then we write
$\rho^{(d)}$ for the representation
\[\pi \circ \rho|_{W\up{d}_t} : W\up{d}_t \rarrow GL_r(A).\] 

\begin{lemma}
  The map \[\rho \mapsto \rho\up{d}\] defines a formally smooth morphism
  $X_{\rhobar}^{\sigma \in {\hat N}} \rarrow X^{GL_r}_{\rhobar\up{d}}$.
\end{lemma}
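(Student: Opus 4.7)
The plan is to make the morphism entirely explicit in matrix form and then argue infinitesimal lifting directly. For $A \in \Cc_{\Oc}$ and $\rho \in X_{\rhobar}^{\sigma \in {\hat N}}(A)$, the containments $\rho(\sigma) \in {\hat N}(A)$ and $\rho(\phi) \in w{\hat N}(A)$ allow us to write
\[\rho(\sigma) = \diag(A_1, \ldots, A_d), \qquad \rho(\phi) = w \cdot \diag(B_1, \ldots, B_d),\]
with $A_i, B_i \in GL_r(A)$. Unravelling $\rho(\phi)\rho(\sigma)\rho(\phi)^{-1} = \rho(\sigma)^q$ using the explicit cyclic block form of $w$ gives the $d$ identities $B_i A_i B_i^{-1} = A_{i-1}^q$ (indices mod $d$). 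The relations for $i = 2, \ldots, d$ can be solved successively as $A_i = B_i^{-1} A_{i-1}^q B_i$, and iterating this yields $A_d^q = C^{-1} A_1^{q^d} C$ with $C := B_2 B_3 \cdots B_d$. The remaining wrap-around relation $A_1 = B_1^{-1} A_d^q B_1$ then collapses to the single constraint $B' A_1 (B')^{-1} = A_1^{q^d}$, where $B' := CB_1 = B_2 B_3 \cdots B_d B_1$. By direct computation $A_1 = \pi(\rho(\sigma))$ and $B' = \pi(\rho(\phi)^d)$, so this is precisely the relation defining a homomorphism $W_t^{(d)} \rarrow GL_r(A)$, i.e.\ a lift of $\rhobar^{(d)}$.

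For formal smoothness I would apply the infinitesimal criterion. Let $A' \twoheadrightarrow A$ be a small surjection in $\Cc_{\Oc}$, let $\rho \in X_{\rhobar}^{\sigma \in {\hat N}}(A)$ have data $(A_i, B_i)$, and suppose we are given a lift $\tilde\rho^{(d)} \in X_{\rhobar^{(d)}}^{GL_r}(A')$ of $\rho^{(d)}$ with $\tilde A_1 := \tilde\rho^{(d)}(\sigma)$ and $\tilde B' := \tilde\rho^{(d)}(\phi^d)$. I would arbitrarily lift $B_2, \ldots, B_d$ to $\tilde B_i \in GL_r(A')$, set $\tilde B_1 := (\tilde B_2 \cdots \tilde B_d)^{-1} \tilde B'$ (which automatically lifts $B_1$), and define $\tilde A_i := \tilde B_i^{-1} \tilde A_{i-1}^q \tilde B_i$ for $i = 2, \ldots, d$ (which lifts $A_i$). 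Assembling $\tilde\rho(\sigma) := \diag(\tilde A_i)$ and $\tilde\rho(\phi) := w \cdot \diag(\tilde B_i)$, the only defining relation not already enforced by construction is the wrap-around identity $\tilde A_d^q = \tilde B_1 \tilde A_1 \tilde B_1^{-1}$; by the telescoping computation of the previous paragraph this is equivalent to $\tilde B' \tilde A_1 (\tilde B')^{-1} = \tilde A_1^{q^d}$, which holds because $\tilde\rho^{(d)}$ is a homomorphism. Thus $\tilde\rho$ lifts $\rho$ and projects to $\tilde\rho^{(d)}$, establishing the lifting property.

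The main obstacle is the bookkeeping: verifying that the $d$ local conjugation relations $B_i A_i B_i^{-1} = A_{i-1}^q$ telescope into the single global relation $B' A_1 (B')^{-1} = A_1^{q^d}$ requires careful tracking of indices, orders of multiplication, and the cyclic wrap-around (and matches the conventions chosen for $w$). Once that identification is in hand, formal smoothness is transparent: the fibre of $\rho \mapsto \rho^{(d)}$ is freely parameterised by the $(d-1)$ unconstrained choices $(\tilde B_2, \ldots, \tilde B_d) \in GL_r(A')^{d-1}$, so the morphism is, infinitesimally, a bundle whose fibre is the completion of $GL_r^{d-1}$ at the identity.
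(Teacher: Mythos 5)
Your proof is correct and is essentially the paper's argument: the recursive assignment $\tilde A_i = \tilde B_i^{-1}\tilde A_{i-1}^q\tilde B_i$ together with $\tilde B_1 = (\tilde B_2\cdots\tilde B_d)^{-1}\tilde B'$ is exactly the inverse map the paper writes down to identify $X_{\rhobar}^{\sigma\in\hat N}$ with $X^{GL_r}_{\rhobar^{(d)}}\times\prod_{i=2}^d GL^{\wedge}_{r,\bar\Psi_i}$, under which $\rho\mapsto\rho^{(d)}$ becomes the (formally smooth) projection. You phrase this as a direct check of the infinitesimal lifting criterion rather than as an isomorphism onto a product, but the content — including the telescoping of the $d$ conjugation relations into the single relation $B'A_1(B')^{-1}=A_1^{q^d}$ with $B'=(\Phi^d)_1$ — is the same.
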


\begin{proof} Let $A \in \Cc_\Oc$.  For $g \in {\hat N}(A)$ any element, let $g_i$ be the projection onto the $i$th
  factor of ${\hat N}$ (so $g_i \in GL_{r}(A)$).  If $\rho$ is an $A$-point of $X_{\rhobar}^{\sigma \in {\hat N}}$, we
  write $\Sigma$ and $\Phi$ for $\rho(\sigma)$ and $\rho(\phi)$.  Any point of $X_{\rhobar}^{\sigma \in {\hat N}}(A)$
  has the form $(\Sigma,\Phi = w\Psi)$ for $\Sigma, \Psi \in {\hat N}(A)$ such that
  $\Psi_i\Sigma_i\Psi_i^{-1} = \Sigma_{i-1}^q$ for all $i$ (with indices taken modulo $d$).  Note that
  $(\Phi^d)_1 = \Psi_2 \ldots \Psi_d \Psi_1$.  Define a morphism
  \begin{align*}X^{\sigma \in {\hat N}}_{\rhobar} & \rarrow X^{GL_r}_{\rhobar\up{d}} \times \prod_{i=2}^{d}GL^\w_{r, \bar{\Psi}_i} \\
    (\Sigma, w\Psi) & \mapsto \left((\Sigma_1, (w\Psi)^d_1), \Psi_2, \ldots, \Psi_d\right).
  \end{align*}
  This is in fact an isomorphism; we may write down the inverse
  \[\left((\Sigma\zeta^{-1}, \Phi), \Psi_2, \ldots, \Psi_d\right) \mapsto (\Sigma', w\Psi') \]
  where $\Sigma'$ is defined by $\Sigma'_1 = \Sigma$ and $\Sigma'_i = \Psi_i^{-1} (\Sigma'_{i-1})^q\Psi_i$ for
  $i \geq 2$, and $\Psi'$ is defined by $\Psi'_i = \Psi_i$ for $i \geq 2$ and $\Psi'_1 = (\Psi_2\ldots\Psi_d)^{-1}\Phi$.
  The lemma follows.
\end{proof}

We therefore have a formally smooth map
\[ X^{\hat G}_{\rhobar} \rarrow X^{GL_r}_{\rhobar\up{d}}.\] If we let ${\hat M}' = {\hat M} \cap GL_r$, then we may redo
the above arguments with ${\hat G}$ replaced by ${\hat M}$ and $GL_r$ replaced by ${\hat M}'$ and obtain a commuting
diagram
\begin{equation} \label{cd1}\begin{tikzcd} X^{\hat M}_{\rhobar} \arrow[d, hook] \arrow[r] & X^{{\hat M}'}_{\rhobar\up{d}} \arrow[d,hook] \\
    X^{\hat G}_{\rhobar} \arrow[r] & X^{GL_r}_{\rhobar\up{d}}
\end{tikzcd}\end{equation}
in which the horizontal morphisms are formally smooth.

The representation $\rhobar\up{d} : W_t\up{d} \rarrow GL_r(\FF)$ has the property that $\rhobar\up{d}|I_t$ has
semisimplification given by a scalar $\bar{t} : I_t\up{d} \rarrow Z(GL_r(\FF))$.  Choose an extension of $\bar{t}$ to
$W_t\up{d}$ and let $\theta : W_t\up{d} \rarrow Z(GL_r(\Oc))$ be its Teichm\"{u}ller lift.  Twisting by $\theta$ gives a
bijection between deformations of $\rhobar\up{d}$ and deformations of $\rhobar \up{d} \otimes \theta^{-1}$, which is
unipotent on inertia.  We can therefore apply Corollary~\ref{cor:unipotent-case}, which shows that there is a formally
smooth morphism $X^{GL_r}_{\rhobar\up{d}} \rarrow \Sc^{\hat M'}(q^d)_{\bar{t}}$ such that the triangle
\begin{equation}\label{cd2}\begin{tikzcd} X^{{\hat M}'}_{\rhobar\up{d}} \arrow[d,hook] \arrow[dr,"\ch_I"] & {}  \\
    X^{GL_r}_{\rhobar\up{d}} \arrow[r]& \Sc^{{\hat M}'}(q^d)_{\bar{t}}.
\end{tikzcd}\end{equation}
commutes.

We may choose an inclusion $\hat{M}' \times \ldots \times \hat{M}' \into \hat{M}$ as a normal subgroup, where there are $d$
copies of $\hat{M}'$, such that conjugation by $\rhobar(\phi) \in \hat{M}$ permutes these copies cyclically.  Take
$\hat{T}'$ to be a split maximal torus of $\hat{M}'$ and $\hat{T} = \hat{T}' \times \ldots \times \hat{T}'$ the split
maximal torus of $\hat{M}$ obtained from it.  The map
\[(\bar{t}, \bar{t}^q, \ldots, \bar{t}^{q^{d-1}}) : I_t \rarrow Z(\hat{M}' \times \ldots \times \hat{M}')(\FF) \into \hat{T}(\FF)\]
defines a point $\bar{s}$ of $\Sc^{\hat M}(q)(\FF)$ which is exactly the point corresponding to $\rhobar|_{I_t}$.  
\begin{lemma} There is an isomorphism
  \[S^{\hat{M}}_{\bar{s}} = \Sc^{\hat M}(q)_{\bar{s}} \isomto \Sc^{{\hat M}'}(q^d)_{\bar{t}}\] such that the diagram
 \begin{equation}\label{cd3}\begin{tikzcd}X^{\hat M}_{\rhobar}  \arrow[d] \arrow[r,"\ch_I"] & S^{\hat M}_{\bar{s}} \arrow[d,equal ]  \\
 X^{{\hat M}'}_{\rhobar\up{d}} \arrow[r, "\ch_I"]& \Sc^{{\hat M}'}(q^d)_{\bar{t}}
\end{tikzcd}\end{equation}
commutes.
\end{lemma}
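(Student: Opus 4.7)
The plan is to construct the isomorphism by ``first-block projection'' via the decomposition $\hat{T} = (\hat{T}')^d$ coming from $(\hat{M}')^d \subset \hat{M}$, under which $\bar{s} = (\bar{t}, \bar{t}^q, \ldots, \bar{t}^{q^{d-1}})$.  The first step is to identify the stabilizer of $\bar{s}$ in $W_{\hat{M}}$.  Since each $\bar{t}^{q^i}$ is a scalar character (central in $\hat{M}'$) and the $d$ values form a single $q$-orbit of distinct elements, any $w \in W_{\hat{M}}$ fixing $\bar{s}$ must preserve the block decomposition and then act trivially within each block, so $\mathrm{Stab}_{W_{\hat{M}}}(\bar{s}) = (W_{\hat{M}'})^d$.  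It follows that $(\hat{T}/W_{\hat{M}})_{\bar{s}}$ is naturally isomorphic to $\prod_{i=1}^{d}(\hat{T}'/W_{\hat{M}'})_{\bar{t}^{q^{i-1}}}$.

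Next, I analyse the $q$-stability.  An $A$-point of $S^{\hat{M}}_{\bar{s}}$ is represented by $\tau = (\tau_1, \ldots, \tau_d)$ lifting $\bar{s}$ with $\tau^q = \tau^w$ for some $w \in W_{\hat{M}}$.  Modulo the stabilizer $(W_{\hat{M}'})^d$, the relevant coset of $w$ is determined by $\bar{s}^q = c \cdot \bar{s}$, where $c \in W_{\hat{M}}$ is the cyclic shift induced by $\rhobar(\phi)$ (which lies in $\hat{M}$ and normalises $\hat{T}$ modulo $(\hat{M}')^d$); hence we may take $w = c \cdot (w_1, \ldots, w_d)$ with $(w_i) \in (W_{\hat{M}'})^d$.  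Componentwise this reads $\tau_{i+1} = w_i \tau_i^q w_i^{-1}$ (indices mod $d$), and iterating $d$ times produces $\tau_1^{q^d} = (w_d \cdots w_1)^{-1}\tau_1 (w_d \cdots w_1)$, an element of $W_{\hat{M}'}$ witnessing the $q^d$-stability of $\tau_1$.

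I then define
\[ \Theta : S^{\hat{M}}_{\bar{s}} \rarrow \Sc^{\hat{M}'}(q^d)_{\bar{t}}, \qquad [\tau] \mapsto [\tau_1]. \]
An inverse is given by $[\tau'] \mapsto [(\tau', \tau'^q, \ldots, \tau'^{q^{d-1}})]$, which is $q$-stable with witness $w = c \cdot (w', 1, \ldots, 1)$ if $w' \in W_{\hat{M}'}$ witnesses the $q^d$-stability of $\tau'$.  One composition is immediate; for the other, the recursions $\tau_{i+1} = w_i \tau_i^q w_i^{-1}$ allow us to conjugate any representative into the standard form $(\tau_1, \tau_1^q, \ldots, \tau_1^{q^{d-1}})$ by an appropriate element of $(W_{\hat{M}'})^d$.

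Finally, for the diagram commutativity, recall from Section~\ref{sec:unipotent-reduction} that $\rho\up{d} = \pi \circ \rho|_{W_t\up{d}}$ restricts on inertia to the projection of $\rho|_{I_t}$ to the first factor of the block decomposition $\hat{N} = (\hat{M}')^d$.  For $\rho \in X^{\hat{M}}_{\rhobar}(A)$, going right and then applying $\Theta$ gives $[(\rho|_{I_t})_1]$, while going down then right gives $\ch_I(\rho\up{d}) = [\rho\up{d}|_{I_t}] = [(\rho|_{I_t})_1]$, and these agree.  The main obstacle is the stabilizer calculation and the resulting local product decomposition of $(\hat{T}/W_{\hat{M}})_{\bar{s}}$; once that is established, the rest is bookkeeping with the cyclic-shift Weyl element $c$.
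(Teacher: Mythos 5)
Your proof is correct and follows essentially the same route as the paper: the paper also defines the map on $A$-points by projecting a $W_{\hat M}$-orbit $(s_1,\ldots,s_d)$ to the $W_{\hat M'}$-orbit of $s_1$, with inverse $s_1 \mapsto (s_1, s_1^q, \ldots, s_1^{q^{d-1}})$. You simply supply more of the verification (the stabiliser computation and the componentwise analysis of the $q$-stability witness) that the paper leaves implicit.
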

\begin{proof}
  We write down the map on $A$-points.  This sends the $W_{\hat M}$-orbit of $(s_1, s_2, \ldots, s_r)$, where each
  $s_i : I_t \rarrow \hat{T}'(A)$ is a lift of $\bar{s}$, to the $W_{{\hat M}'}$-orbit of
  $s_1$.  This is an isomorphism; its inverse is the map taking the
  $W_{{\hat M}'}$-orbit of $s_1$ to the $W_{\hat M}$-orbit of
  \[(s_1, s_1^q, \ldots, s_1^{q^{d-1}}).\qedhere\]
\end{proof}

\begin{proof}[Proof of Theorem~\ref{thm:def-rings}] Putting the commuting diagrams \eqref{cd1}, \eqref{cd2} and~\eqref{cd3}
  together, we obtain a commuting triangle
\[\begin{tikzcd} X^{\hat M}_{\rhobar} \arrow[rd, "\ch_I"'] \arrow[r,hook] & X^{\hat G}_{\rhobar} \arrow[d] \\
      & S^{\hat M}_{\bar{s}}
    \end{tikzcd}\] in which the right hand vertical morphism is formally smooth, as required.
\end{proof}

\section{Representations of finite general linear groups}
\label{sec:rep-gln}

\subsection{Dual groups, tori and parameters}
\label{sec:dual}

We follow \cite{MR2480618} section~4.3 and give a formulation of Deligne--Lusztig theory that is adapted for our
purposes.

Recall that $k$ is the residue field of $F$, of order $q$.  Let $G$ be a product of general linear groups over $k$, and
let $\TT$ be a split maximal torus of $G$ defined over $k$.  We fix an $l$-adic coefficient system $(E, \Oc, \FF)$.  We
take $\hat T$ and $\hat G$ to be a dual torus of $\TT$ and dual group of $G$, defined over $\Oc$.  We assume that $E$ is
sufficiently large; precisely, we impose the assumption~\eqref{eq:sufflarge}.  We write $X = X(\TT) = \Hom(\TT, \GG_m)$,
$Y = Y(\TT) = \Hom(\GG_m, \TT)$, $X(\hat{T}) = \Hom(\hat T, \GG_m)$, and
$Y(\hat T) = \Hom(\GG_m, \hat{T})$.

By definition, we have fixed isomorphisms
\begin{align*}X(\TT) &= Y(\hat T) \\ \intertext{and} Y(\TT) & = X(\hat T) \end{align*} respecting
the natural pairings.

We write $W = W(G,\TT)$ for the Weyl group of $\TT$.  It acts on the left on $\TT$.  We thus obtain left actions on
$X(\TT)$ and $Y(\TT)$: the former is defined by $w\alpha = \alpha \circ w^{-1}$ and the latter by
$w\beta = w \circ \beta$, for all $\alpha \in X(\TT)$, $\beta \in Y(\TT)$, $w \in W$.  Thus $W$ acts on the left on
$Y(\hat T)$ and $X(\hat T)$.  Let $\hat{W} = W(\hat{G}, \hat{T})$.  Then there is an isomorphism
$\delta : W \isomto \hat{W}$ such that the action of $w$ on $X(\TT)$ agrees with the action of $\delta(w)$ on
$Y(\hat{T})$.  We identify $W$ with $\hat{W}$ along this isomorphism.  Note that this is differs from the
anti-isomorphism of \cite{MR2480618} by an inverse; we find it more convenient to work with a group isomorphism.

Now let $T \subset G$ be another maximal torus, not necessarily split.  Choose $g \in G(\bar{k})$ such that
$T_{\bar{k}} = g\TT_{\bar{k}}g^{-1}$.  Then $g^{-1}F(g) \in N(\TT_{\bar{k}})$; write $w$ for its image in $W$.  This
induces a bijection between $G(k)$-conjugacy classes of maximal tori in $G$, and conjugacy classes in $W$.  If $w$ is
any element of $W$, we write $T_w$ for a choice of torus in the corresponding conjugacy class.  If $F$ is the geometric
Frobenius morphism over $k$, then the diagram
\[ \begin{CD} \TT_{\bar{k}} @>{\ad_g}>>T_{\bar{k}} \\ @V{wq}VV @V{F}VV \\ \TT_{\bar{k}}@>{\ad_g}>> T_{\bar{k}}\end{CD}\]
commutes.  Consequently, $\ad_g$ induces an isomorphism $\TT(\bar{k})^{wq} \isomto T(k)$.  Choose $n$ such that
$w^n = e$ and write $N = 1 + qw + (qw)^2 + \ldots + (qw)^{n-1} \in \ZZ[W]$.  Then there is an isomorphism
\begin{equation}\label{eq:normisom}N :\TT(k_n)/(1 - qw) \isomto \TT(\bar{k})^{wq}.\end{equation}

Recall that $E$ satisfies assumption~\eqref{eq:sufflarge}.  Then we have isomorphisms
\begin{align}\Hom(\TT(k_n), E^\times) &\cong \Hom(Y \otimes k_n^\times, E^\times) \label{line1} \\
                                    & \cong \Hom(k_n^\times, \Hom(Y,E^\times)) \label{line2} \\
                                    & \cong \Hom(k_n^\times, \hat{T}(E)), \label{line3}
\end{align}
the first isomorphism coming from $\TT(k_n) = Y\otimes k_n^\times$ and the last from
\[\hat{T}(E) \cong \Hom(X(\hat{T}), E^\times) = \Hom(Y, E^\times).\]
The composite of the isomorphisms~\eqref{line1}--\eqref{line3} takes $\theta \in \Hom(\TT(k_n), E^\times))$ to the
element $s \in \Hom(k_n^\times, \hat{T}(E))$ such that
\[ y(s(\alpha)) = \theta(y(\alpha))\] for all $y \in Y(\TT) = X(\hat{T})$ and $\alpha \in k^\times_n$.  Combining with the
isomorphism $N$ from equation~(\ref{eq:normisom}), we obtain an isomorphism
\[\Hom(\TT(\bar{k})^{wq}, E^\times) \cong \Hom(k_n, \hat{T}(E)^{w = q}).\]
Finally, we compose with the natural surjection $I_t \onto k_n$ and note that every homomorphism $I_t \rarrow
\hat{T}(E)^{w = q}$ factors through this surjection, so that we have an isomorphism
\begin{equation}\label{eq:isom}
    \Hom(\TT(k_n)^{wq}, E^\times) \cong \Hom(I_t, \hat{T}(E)^{w = q})\end{equation}
that is independent of any choices (of generators for $I_t$ or $k_n^\times$, or groups of roots of unity in $E$).  If we
choose, additionally, $n$ to be large enough that $g \in G(k_n)$, and compose the isomorphism~\eqref{eq:isom} with the isomorphism
$\ad_g : \TT_{k_n} \rarrow T_{k_n}$, we get
\[\Hom(T(k), E^\times) \cong \Hom(I_t, \hat{T}(E)^{w = q}).\]

\begin{remark} This isomorphism is exactly the restriction to tame inertia of the local Langlands correspondence for
  unramified tori constructed in \cite[section 4.3]{MR2480618} (over the complexes, but the construction works equally
  well over any field of characteristic zero containing enough roots of unity).
\end{remark}

We therefore obtain, for every $T$ and every $\theta \in \Hom(T(k), E^\times)$, a $W$-conjugacy class of pairs $(w, s)$
where $w \in W$ and $s : I_t \rarrow \hat{T}(E)^{w = q}$.  Then it is easy to check the following lemma.

\begin{lemma}\label{lem:dual-conjugacy} The above map taking $(T, \theta)$ to $(w, s)$ gives a bijection between
  \[\{\text{conjugacy classes of pairs $(T, \theta)$ : $T$ maximal torus in $G$, $\theta \in \Hom(T(k),E^\times)$}\}\] and
  \[\{\text{$W$-conjugacy classes of $(w,s)$ : $w \in W$ and $s \in \Hom(I_t, \hat T(E)^{w = q})$}\}.\qedhere\]
\end{lemma}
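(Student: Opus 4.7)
The plan is to construct the inverse map explicitly and verify that the choices made in the forward construction only affect $(w,s)$ by a simultaneous $W$-conjugation. Each individual step in the chain of identifications is already a bijection, so the substance of the lemma is bookkeeping about how the choices interact.

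First I would check well-definedness of the forward map. If one replaces $(T,\theta)$ by the $G(k)$-conjugate pair $(hTh^{-1}, \theta\circ\ad_{h^{-1}})$ for $h \in G(k)$, then $g' = hg$ conjugates $\TT_{\bar k}$ to the new torus, and $F(h)=h$ gives $(g')^{-1}F(g') = g^{-1}F(g)$, leaving $w$ unchanged; the identifications transport equivariantly via $\ad_h$, so $s$ is unchanged too. If instead one keeps $T$ but replaces $g$ by $g' = gn$ with $n \in N(\TT)(\bar k)$ mapping to $v \in W$, then since $\TT$ is $k$-split $F$ acts trivially on $W$, and so $(g')^{-1}F(g') = n^{-1}g^{-1}F(g)F(n)$ maps to $v^{-1}wv$ in $W$; the same change of basis transforms the norm isomorphism \eqref{eq:normisom} and the chain \eqref{line1}--\eqref{line3} compatibly, sending $s$ to $v^{-1}\cdot s$. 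Thus the $W$-orbit of $(w,s)$ is well-defined.

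For the inverse, given a $W$-orbit of $(w,s)$, pick a representative and choose any maximal $k$-torus $T_w \subset G$ in the $G(k)$-conjugacy class corresponding to the conjugacy class of $w$; existence follows from Lang's theorem together with the triviality of the $F$-action on $W$ (since $\TT$ is split), which yields $g \in G(\bar k)$ with $T_{w,\bar k} = g\TT_{\bar k}g^{-1}$ and $g^{-1}F(g)$ representing $w$. Running the chain of identifications backward produces $\theta \in \Hom(T_w(k), E^\times)$, and the same calculation as in the previous paragraph shows that the resulting $G(k)$-conjugacy class of $(T_w,\theta)$ is independent of all the choices.

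The two maps are mutually inverse by construction, since each ingredient---the bijection between $G(k)$-conjugacy classes of maximal $k$-tori and conjugacy classes in $W$, the norm isomorphism \eqref{eq:normisom}, the chain \eqref{line1}--\eqref{line3}, and the observation that any homomorphism $I_t \rarrow \hat T(E)^{w=q}$ automatically factors through the finite quotient $k_n^\times$ (as the image is finite of order prime to $p$)---is individually a bijection. The main bookkeeping obstacle I anticipate is confirming that the left $W$-action on the dual side (transported through the isomorphism $\delta : W \isomto \hat W$ identifying $X(\TT) = Y(\hat T)$ as $W$-modules) is compatible with the conjugation action coming from $\ad_g$; the author's choice to make $\delta$ a group isomorphism rather than an anti-isomorphism was set up precisely so that these compatibilities hold without any inverses appearing, and once this is verified the lemma is immediate.
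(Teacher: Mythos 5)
Your proposal is correct and is exactly the verification the paper has in mind --- the paper in fact omits the proof entirely, declaring the lemma ``easy to check'' after setting up the chain of identifications, and your argument (well-definedness under change of $g$ by $N(\TT)(\bar k)$ and under $G(k)$-conjugation, plus the explicit inverse via Lang's theorem) is the intended bookkeeping. The only nitpick: the factorization of a homomorphism $I_t \rarrow \hat T(E)^{w=q}$ through $k_n^\times$ holds because the target is killed by $q^n-1$ (apply ${}^w t = t^q$ iteratively and use $w^n=e$), not merely because the image is finite of prime-to-$p$ order.
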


Recall (see for example \cite{MR1118841} Definition~13.2) that two pairs $(T, \theta)$ and $(T', \theta')$ are
\emph{geometrically conjugate} if there is some $n \geq 1$ and $h \in G(k_n)$ such that $T'_{k_n} = hT_{k_n}h^{-1}$ and
\[\theta \circ N_{k_n/k} = \theta' \circ N_{k_n/k} \circ \ad_h\]
as characters of $T(k_n)$, where $N_{k_n/k}$ is the norm.

\begin{lemma}\label{lem:dual-geometric-conjugacy}
  The above map $(T, \theta) \mapsto s$ induces a bijection between
  \[\{\text{geometric conjugacy classes of pairs $(T, \theta)$}\}\]
  and
  \[\{\text{$q$-power stable $W$-orbits of $s \in \Hom(I_t, \hat T(E))$}\}.\]
\end{lemma}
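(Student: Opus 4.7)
The plan is to build on Lemma~\ref{lem:dual-conjugacy}, which already identifies conjugacy classes of pairs $(T, \theta)$ with $W$-conjugacy classes of pairs $(w, s)$. Since geometric conjugacy is a coarsening of ordinary conjugacy, it suffices to check that under the bijection of Lemma~\ref{lem:dual-conjugacy}, the equivalence relation ``geometric conjugacy'' on the left matches the equivalence relation ``$s$ and $s'$ lie in the same $W$-orbit'' on the right (i.e.\ forgetting $w$ entirely).

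First, I would verify that the forgetful assignment $(w, s) \mapsto s$ induces a well-defined map from $W$-conjugacy classes of pairs $(w, s)$ (with $w \cdot s = s^q$) to $W$-orbits in $\Hom(I_t, \hat{T}(E))$, and that its image is exactly the set of $q$-power stable orbits. Indeed, the condition $w \cdot s = s^q$ places $s^q$ in the $W$-orbit of $s$, so the orbit is automatically $q$-power stable; conversely, any $q$-power stable orbit has a representative $s$ admitting some $w \in W$ with $w \cdot s = s^q$, yielding a valid pair $(w,s)$. Combined with Lemma~\ref{lem:dual-conjugacy}, this produces a surjective map from conjugacy classes of $(T, \theta)$ onto $q$-power stable $W$-orbits.

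Second, I would show that geometric conjugacy of $(T, \theta)$ corresponds exactly to sharing a $W$-orbit of $s$. For well-definedness, suppose $(T, \theta)$ and $(T', \theta')$ are geometrically conjugate via $h \in G(k_n)$; writing $T = g\TT g^{-1}$ and $T' = g'\TT(g')^{-1}$ over $\bar k$, the elements $hg$ and $g'$ both conjugate $\TT_{k_n}$ to $T'_{k_n}$, so they differ by an element of $N_G(\TT)(k_n)$ whose image in $W$ acts on the resulting parameter compatibly (via the tower of isomorphisms~\eqref{line1}--\eqref{line3} and the norm isomorphism~\eqref{eq:normisom}). Tracing through, this action changes $w'$ by a $W$-conjugate but acts on $s'$ by an element of $W$, so the $W$-orbits of $s$ and $s'$ coincide. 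For injectivity, suppose $(T, \theta)$ and $(T', \theta')$ give rise to the same $W$-orbit of $s$; after conjugating we may assume the underlying homomorphisms $I_t \rarrow \hat{T}(E)$ literally agree. Then running the isomorphism~\eqref{eq:isom} backwards translates this equality into an identity of characters of $\TT(k_n)$ of the form $\theta \circ \ad_g \circ N_{k_n/k} = \theta' \circ \ad_{g'} \circ N_{k_n/k}$, which is precisely geometric conjugacy with $h = g' g^{-1} \in G(k_n)$ (enlarging $n$ if necessary).

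The main obstacle is bookkeeping: one must carefully verify that the $W$-action on $\hat{T}(E)$ appearing on the parameter side corresponds exactly to the action on $\TT(k_n)$ induced by the choice of the $\bar k$-point $g$ with $g \TT g^{-1} = T$, modulo $k$-rational conjugation. This is essentially a routine unwinding of the definitions, and I do not expect any deeper difficulty, as the lemma is in spirit the classical Deligne--Lusztig statement that geometric conjugacy classes of pairs correspond to semisimple conjugacy classes in the dual group, rephrased in the present notation.
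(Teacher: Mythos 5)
Your proposal is correct and follows essentially the same route as the paper: both reduce to Lemma~\ref{lem:dual-conjugacy} and then check that geometric conjugacy on the $(T,\theta)$ side corresponds, via the norm maps $N_{k_n/k}$ and $N$, to forgetting $w$ and passing to the (automatically $q$-power stable) $W$-orbit of $s$. The paper merely packages the compatibility you describe verbally into a single commuting diagram in which precomposition with the norm corresponds to the inclusion $\Hom(I_t,\hat{T}(E)^{w=q})\subset\Hom(I_t,\hat{T}(E)[q^n-1])$.
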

\begin{proof}
  Let $n$ be such that $w^n = 1$ for all $w \in W$.  If $T$ is a maximal torus of $G$ and $g \in G(k_n)$ is such that
  $T_{\bar{k}} = g\TT_{\bar{k}}g^{-1}$ and if $w$ is the class of $g^{-1}F(g)$ in $W$, and $N = 1 + qw + \ldots +
  (qw)^{n-1} \in \ZZ[W]$, then we have a commuting diagram
  \[
    \begin{CD} 
      \Hom(T(k), E^\times) @>{\circ \ad_g}>>\Hom(\TT(k_n)^{wq},E^\times) @>>>\Hom(I_t, \hat{T}(E)^{w = q})\\
      @V{\circ N_{k_n/k}}VV @V{\circ N}VV @VVV \\
      \Hom(T(k_n), E^\times) @>{\circ \ad_g}>> \Hom(\TT(k_n), E^\times) @>>> \Hom(I_t, \hat{T}(E)[q^n - 1]).
    \end{CD}
  \]
  The rightmost horizontal arrows are as above, while the rightmost vertical arrow is the obvious inclusion.  Hence
  geometric conjugacy classes of pairs $(T, \theta)$ are in bijection with $q$-power stable $W$-orbits of
  $s \in \Hom(I_t, \hat{T}(E))$ (note that such $s$ automatically has image in $\hat{T}(E)[q^n-1]$).  We see that two
  pairs $(T, \theta)$ and $(T', \theta')$ are geometrically conjugate if and only if the corresponding homorphisms $s$
  and $s'$ are in the same $W$-orbit.  Thus the map taking the geometric conjugacy class of $(T, \theta)$ to the
  $W$-orbit of $s$ is well-defined and injective.  It is surjective by Lemma~\ref{lem:dual-conjugacy}.
\end{proof}

\subsection{Representations of $G(k)$}
\label{sec:DL-reps}

If $s \in \Hom(I_t, \hat T(E))$ is $W$-conjugate to its $q$-th
power, we write $W(s)$ for the stabiliser of $s$ and
\[W(s,s^q) = \{w \in W: {}^ws = s^q\}.\] Thus $W(s,s^q)$ is a left coset of $W(s)$ in $W$.  Note also that
$W(s) = W(s^q)$, so that $W(s)$ acts on $W(s, s^q)$ by conjugation.  Let $\epsilon : W \rarrow \pm 1$ be the sign
character.  For a field $C$ we write $K_C(G(k))$ for the Grothendieck group of representations of $G(k)$ over $C$.

\begin{definition}[Deligne--Lusztig representations] Let $(w, s)$ be a pair comprising an element $w$ of $W$ and a
  homomorphism $s \in \Hom(I_t, \hat T(E)^{w = q})$.  Then we define a virtual representation $R(w,s)$ of $G(k)$ by
  \[R(w,s) = R^{\theta}_{T}\] where $(T,\theta)$ corresponds to $(w, s)$ as in Lemma~\ref{lem:dual-conjugacy}.
  Here $R^\theta_T$ is the Deligne--Lusztig virtual representation constructed in \cite{MR0393266}.
\end{definition}

\begin{definition}[generalized Steinberg representations] Let $s$ be an element of $\Hom(I_t, \hat{T}(E))$,
  $W$-conjugate to its $q$th power.  Define an element \[\pi_G(s) \in K_E(G(k)) \otimes \QQ\] by
\[\pi_G(s) = |W(s)|^{-1}\sum_{w \in W(s,s^q)} \epsilon(w)R(w,s).\]
\end{definition}

\begin{proposition}
   The element $\pi_G(s) = K_E(G(k)) \otimes \QQ$ is (the class of) an irreducible representation.
\end{proposition}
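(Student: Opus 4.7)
The plan is to reduce to the case $G = GL_{n,k}$, using that the construction factors over the $GL$-factors of $G$. I will then show two things: (i) $\langle \pi_G(s), \pi_G(s) \rangle_{G(k)} = 1$; (ii) $\pi_G(s)(1) > 0$. Combined with the fact that $\pi_G(s)$ has integer multiplicities on the irreducible characters of $G(k)$ (a consequence of Lusztig's classification for $GL_n$, or equivalently of the cleanness of Lusztig series for groups with connected center), these two facts force $\pi_G(s)$ to be the class of an irreducible representation.

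For the inner product, I would use the Deligne--Lusztig orthogonality formula, which, translated into the dual language of Lemma~\ref{lem:dual-conjugacy}, reads
\[
  \langle R(w,s), R(w',s) \rangle_{G(k)} = \#\{v \in W(s) \mid v w v^{-1} = w'\}
\]
for $w, w' \in W(s, s^q)$. Plugging into the definition of $\pi_G(s)$, and using that $\epsilon$ is a class function so that $\epsilon(w)\epsilon(vwv^{-1}) = 1$, the resulting double sum collapses to $|W(s, s^q)|/|W(s)| = 1$, since $W(s, s^q)$ is a single coset of $W(s)$.

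For positivity I would apply the Deligne--Lusztig dimension formula
\[
  R(w,s)(1) = \epsilon_G \epsilon_{T_w} \, |G(k)|_{p'}/|T_w(k)|,
\]
where $\epsilon_G, \epsilon_{T_w} \in \{\pm 1\}$ are the standard signs determined by the $k$-ranks of $G$ and $T_w$. For $G = GL_n$ the $k$-rank of $T_w$ equals the number $c(w)$ of cycles of $w \in S_n = W$, so $\epsilon_{T_w} = (-1)^{c(w)}$ and $\epsilon_G = (-1)^n$; combined with $\epsilon(w) = (-1)^{n-c(w)}$, this gives $\epsilon(w) \epsilon_G \epsilon_{T_w} = 1$ for every $w$. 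Thus each term in the defining sum for $\pi_G(s)(1)$ contributes the positive quantity $|G(k)|_{p'}/|T_w(k)|$, and so $\pi_G(s)(1) > 0$.

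The main obstacle is the careful bookkeeping of Frobenius and Weyl group actions in the dualities of Section~\ref{sec:dual}, particularly verifying that the dual form of the Deligne--Lusztig orthogonality relation takes the shape claimed (where the twisted Frobenius $wF$ on $\TT$ corresponds to the condition $w \cdot s = s^q$ on $\hat T$). Since $G$ is a product of general linear groups, centralizers of semisimple elements in $\hat G$ are connected and all signs work out uniformly; alternatively, one may simply invoke Lusztig's explicit classification of the irreducibles of $GL_n(k)$, which directly identifies $\pi_G(s)$ with the semisimple character attached to the Lusztig series indexed by $s$.
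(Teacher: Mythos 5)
Your argument is essentially correct, but it takes a different route from the paper. The paper simply quotes Deligne--Lusztig \cite{MR0393266} Theorem~10.7(i), which asserts outright that $\sum_{(T,\theta)} \pm \langle R^\theta_T, R^\theta_T\rangle^{-1} R^\theta_T$ (summed over a geometric conjugacy class) is an irreducible character; the entire content of the paper's proof is then combinatorial bookkeeping --- matching the sign $(-1)^{\rk_k G - \rk_k T}$ with $\epsilon(w)$, computing $\langle R^\theta_T,R^\theta_T\rangle = |Z_W(w)\cap W(s)|$ in the dual coordinates of Lemma~\ref{lem:dual-conjugacy}, and applying orbit--stabiliser to convert a sum over conjugacy classes of pairs $(T,\theta)$ into the sum over all of $W(s,s^q)$ defining $\pi_G(s)$. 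You instead re-prove the irreducibility criterion from scratch: norm one (your collapse of the double sum to $|W(s,s^q)|\,|W(s)|/|W(s)|^2=1$ is correct, and your dual form of the orthogonality relation agrees with the one the paper derives), positive dimension (your sign computation $\epsilon(w)\epsilon_G\epsilon_{T_w}=1$ for $GL_n$ is right), and integrality of multiplicities. What your route buys is independence from the exact normalisation of DL~10.7(i); what it costs is that the third ingredient --- integrality --- is precisely where the depth lies, and outsourcing it to ``Lusztig's classification for $GL_n$'' is an input of essentially the same strength as Theorem~10.7 itself (whose own proof obtains integrality by pairing against the Gelfand--Graev representation). So your proof is not more elementary, just differently organised; be explicit that integrality is a cited input and not a consequence of the orthogonality relations.

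One genuine slip in your closing aside: $\pi_G(s) = |W(s)|^{-1}\sum_{w}\epsilon(w)R(w,s)$ is the \emph{regular} (generic) character of the Lusztig series attached to $s$, not the semisimple one --- the semisimple character is its Alvis--Curtis dual, obtained (up to sign) by dropping the $\epsilon(w)$. For $GL_n$ these differ whenever the series is not a singleton. The distinction matters here because the paper immediately uses (Theorem~\ref{thm:gelfand-mult}, i.e.\ DL~10.7(ii)) that $\pi_G(s)$ occurs in the Gelfand--Graev representation, which is true of the regular character and false of the semisimple one in general.
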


\begin{proof}
  This follows from \cite{MR0393266} Theorem~10.7 (i).  The formula there states that that
  \begin{equation}\label{DL-form} \sum_{(T, \theta) \bmod G(k)} \frac{(-1)^{\rk_k(G) - \rk_k(T)}}{\pres{R^\theta_T,
        R^\theta_T}}R^\theta_T\end{equation}
  is the class of an irreducible representation, where the sum is over all $G(k)$-conjugacy classes of $(T, \theta)$
  in the geometric conjugacy class of $s$ (under the correspondence of Lemma~\ref{lem:dual-geometric-conjugacy}).

  We claim first that, if $T$ is a maximal torus of $G$ corresponding to $w \in W$, then
  \[(-1)^{\rk_k(G) - \rk_k(T)} = \epsilon(w).\] Indeed, $\rk_k(T)$ is the dimension of the $(+1)$-eigenspace of $w$
  acting on $X(\TT) \otimes \CC$.  Since the eigenvalues of $w$ occur in conjugate pairs, this has the same parity as the
  difference of $\rk_k(G) = \dim X(\TT) \otimes \CC$ and the dimension $d$ of the $(-1)$-eigenspace.  As
  $\epsilon(w) = \det(w | X(\TT)) = (-1)^{d}$, we obtain the claim.

  We claim next that $\pres{R^\theta_T, R^\theta_T} = |Z_{W}(w) \cap W(s')|$ if $(T, \theta)$ corresponds to $(w,s')$.
  Indeed, we have the formula (\cite{MR0393266} Theorem~6.8)
  \[\pres{R^\theta_T, R^\theta_T} = |\{v \in W(T)^F : {}^v\theta = \theta\}|.\]
  The identification of $W(T)$ with $W(\TT) = W$ via $\ad_g$ identifies $W(T)^F$ with $Z_W(w)$ and the stabiliser of
  $\theta$ with the stabiliser of $s'$, and we have
  \[\pres{R^\theta_T, R^\theta_T} = |\{v \in Z_W(w) : {}^vs' = s'\}| = |Z_{W}(w) \cap W(s')|\]
  as required.

  We now can rewrite the expression~\eqref{DL-form} as
  \[\sum_{(w,s') \bmod W} \frac{\epsilon(w)}{|Z_W(w) \cap W(s')|} R(w,s')\]
  where the sum runs over $W$-conjugacy classes of pairs $(w,s')$ such that $s'$ is $W$-conjugate to $s$ and
  $w \in W(s', (s')^q)$.  We can conjugate each term $(w, s')$ in this sum so that $s' = s$ and rewrite it as
  \[\sum_{w \in W(s, s^q) \bmod W(s)} \frac{\epsilon(w)}{|Z_W(w) \cap W(s)|} R(w,s)\]
    where the sum is over $W(s)$-conjugacy classes in $W(s,s^q)$.  Finally, we rewrite this as
    \[\frac{1}{|W(s)|}\sum_{w \in W(s,s^q) \bmod W(s)} \frac{|W(s)|}{|Z_W(w) \cap W(s)|} \epsilon(w)R(w,s),\]
    which on application of the orbit-stabiliser theorem (to the conjugation action of $W(s)$ on $W(s,s^q)$) becomes
    \[\frac{1}{|W(s)|}\sum_{w \in W(s,s^q)} \epsilon(w)R(w,s),\]
    as required.
\end{proof}

\begin{definition}\label{def:piGtau} Suppose that $\tau : I_t \rarrow \hat{G}(E)$ is an inertial $\hat G$-parameter, and
  assume that $\tau_s$ has image in $\hat{T}(E)$.  Then there is a split Levi subgroup $L \subset G$, with dual Levi
  $\hat{L}\supset \hat{T}$, such that $\tau$ factors through a discrete inertial $\hat{L}$-parameter.  Define a
  representation $\pi_G(\tau)$ of $G$ by
  \[\pi_G(\tau) = \Ind_{L(k)}^{G(k)} \pi_L(s)\]
  and note that this is (up to isomorphism) independent of the choice of $L$.
\end{definition}

Next we recall some facts about the Gelfand--Graev representation.  Let $B$ be a Borel subgroup of $G$ containing
the split maximal torus $\TT$, and let $U$ be its unipotent radical.  Let $\psi : U(k) \rarrow W(\FF)^\times$ be a
character in general position (i.e. whose stabiliser in $B/U$ is $ZU/U$).
\begin{definition} The (integral) Gelfand--Graev representation is \[\Gamma_G = \Ind_{U(k)}^{G(k)}\psi.\]
  Up to isomorphism, it is independent of the choices of $T, B$, and $\psi$.
\end{definition}
If $A$ is a $W(\FF)$-algebra then we set $\Gamma_{G,A} = \Gamma_G \otimes_{W(\FF)} A$.
\begin{lemma}\label{lem:projective} For any $W(\FF)$-algebra $A$, the representation $\Gamma_{G, A}$ is a projective $A[G(k)]$-module.
\end{lemma}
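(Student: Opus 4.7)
The plan is to prove projectivity in two steps: first show that the inducing datum $\psi$ is a projective $A[U(k)]$-module, then show that induction from $U(k)$ to $G(k)$ preserves projectivity.

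The key numerical input is that $|U(k)|$ is a power of $p$, and since $A$ is a $W(\FF)$-algebra with $\FF$ of characteristic $l \neq p$, the prime $p$ is a unit in $W(\FF)$ and hence in $A$. Therefore $|U(k)|$ is invertible in $A$, so the group algebra $A[U(k)]$ is a separable $A$-algebra. Consequently, every $A[U(k)]$-module which is projective as an $A$-module is projective as an $A[U(k)]$-module (the averaging trace map $e = |U(k)|^{-1}\sum_{u \in U(k)} u$ provides the splitting of the multiplication $A[U(k)] \otimes_A M \to M$ needed to realize $M$ as a direct summand of an induced module from the trivial subgroup). Applying this to $\psi$, which is free of rank $1$ over $A$, we conclude that $\psi$ is projective over $A[U(k)]$.

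For the second step, note that $\Gamma_{G,A} = A[G(k)] \otimes_{A[U(k)]} \psi$, and that $A[G(k)]$ is free as a right $A[U(k)]$-module on any system of coset representatives for $G(k)/U(k)$. Hence $\Ind_{U(k)}^{G(k)}$ sends projective $A[U(k)]$-modules to projective $A[G(k)]$-modules (equivalently, it is left adjoint to the exact restriction functor). Applying this to $\psi$ yields that $\Gamma_{G,A}$ is projective over $A[G(k)]$, as required.

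No step here is a genuine obstacle; the only subtlety worth flagging in writing is the invertibility of $p$ in $A$, which relies crucially on the hypothesis $l \neq p$ built into the $l$-adic coefficient system, and is what ultimately distinguishes this setting from the modular representation theory of $G(k)$ in defining characteristic.
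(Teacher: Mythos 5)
Your proof is correct and follows essentially the same route as the paper: the paper likewise reduces by Frobenius reciprocity (i.e.\ induction being left adjoint to the exact restriction functor) to showing that $\psi$ is projective over $A[U(k)]$, which holds because $|U(k)|$, a power of $p$, is invertible in $W(\FF)$ and hence in $A$. Your write-up merely spells out the averaging-idempotent argument and the freeness of $A[G(k)]$ over $A[U(k)]$ that the paper leaves implicit.
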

\begin{proof}
  By Frobenius reciprocity, it suffices to show that $A$, with the action of $U(k)$ via $\psi$, is a projective
  $A[U(k)]$-module.  This is true as $|U(k)|$ is invertible in $W(\FF)$.
\end{proof}

\begin{theorem}\label{thm:gelfand-mult} The representation $\Gamma_{G, E}$ is multiplicity-free, and
  \[\Gamma_{G, E} \cong \bigoplus_{[s]} \pi_{G}(s)\]
  where $[s]$ runs over the $q$-power stable $W$-orbits of $\Hom(I_t, \hat{G}(E))$.
\end{theorem}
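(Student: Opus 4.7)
The plan is to interpret this theorem as a direct translation of the classical Deligne--Lusztig decomposition of the Gelfand--Graev representation into the $(w,s)$-parametrization used in this section, combined with the preceding proposition.

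By the preceding proposition, each $\pi_G(s)$ is an irreducible representation of $G(k)$ over $E$, and distinct $q$-power-stable $W$-orbits $[s] \neq [s']$ yield non-isomorphic irreducibles: the corresponding virtual characters are supported on distinct (geometric) Lusztig series and hence are orthogonal. Therefore, once I show $\Gamma_{G,E}$ decomposes as $\sum_{[s]} \pi_G(s)$ at the level of virtual characters, the multiplicity-freeness and the claimed isomorphism follow automatically.

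Since $G$ is a product of general linear groups, its center is connected, so the classical theorem of Deligne--Lusztig (\cite{MR0393266}, Theorem~10.7) applies: $\Gamma_{G,E}$ is multiplicity-free, and contains exactly one irreducible constituent in each geometric conjugacy class of pairs $(T,\theta)$, explicitly given by the formula unpacked in the proof of the preceding proposition. By Lemma~\ref{lem:dual-geometric-conjugacy}, geometric conjugacy classes of pairs $(T,\theta)$ over $E$ correspond bijectively to $q$-power-stable $W$-orbits $[s]$ in $\Hom(I_t, \hat T(E))$. Moreover, the preceding proof already identifies the unique irreducible character attached to the geometric class $[s]$ with $\pi_G(s)$, after translating via $(-1)^{\rk_k(G) - \rk_k(T)} = \epsilon(w)$, $\pres{R^\theta_T, R^\theta_T} = |Z_W(w) \cap W(s)|$, and orbit--stabilizer. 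The theorem then follows by combining these three statements.

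The main obstacle is bookkeeping: ensuring that the parametrization of geometric conjugacy classes used in \cite{MR0393266} matches the $(w,s)$-parametrization used here, and confirming the sign and multiplicity conventions agree. All of this is essentially already done in the proof of the preceding proposition, so no substantially new argument is required. Alternatively, I could bypass the citation and compute directly: using the formula $\pres{\Gamma_{G,E}, R(w,s)} = \epsilon(w)$ (Deligne--Lusztig, Corollary~10.11, which needs connected center), one verifies
\[ \pres{\Gamma_{G,E}, \pi_G(s)} = \frac{1}{|W(s)|}\sum_{w \in W(s,s^q)} \epsilon(w)^2 = \frac{|W(s,s^q)|}{|W(s)|} = 1 \]
for each $[s]$; combined with $\pres{\Gamma_{G,E}, \Gamma_{G,E}} = \#\{\text{semisimple conjugacy classes in } G(k)\} = \#\{[s]\}$ (the first equality is Steinberg's count, the second is Lemma~\ref{lem:dual-geometric-conjugacy}), this forces the claimed decomposition.
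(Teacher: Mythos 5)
Your main route is exactly the paper's: the proof given there is the one--line citation of \cite{MR0393266}, Theorem~10.7(ii), with the translation into the $(w,s)$/$[s]$ language already carried out in the preceding proposition and Lemma~\ref{lem:dual-geometric-conjugacy}, just as you describe. Your alternative inner-product computation (using $\pres{\Gamma_{G,E},R(w,s)}=\epsilon(w)$ together with Steinberg's count of $\dim\End(\Gamma_{G,E})$) is also sound for $G$ a product of general linear groups, but it is not needed.
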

\begin{proof} This is \cite{MR0393266} Theorem~10.7~(ii).
\end{proof}

The final lemma of this section is only needed to compare this article with \cite{shotton-gln}.

\begin{lemma}\label{lem:comparison} Suppose that we are in the situation of Definition~\ref{def:piGtau}, that
  $\rho : W_t \rarrow \hat{G}(\bar{E})$ extends $\tau$, and that $G/\Oc_F$ is a smooth group-scheme extending $G/k$.
  Write $K(1) = \ker(G(\Oc_F) \rarrow G(k))$.  Let $\Pi(\rho)$ be the representation of $G(F)$ associated to $\rho$ by
  the local Langlands correspondence\footnote{Precisely, $\rho \mapsto \Pi(\rho)$ is the inverse of the map $\rec_l$ in
    \cite{MR1876802} Section~VII.2.}, and assume that $\Pi(\rho)$ is \emph{generic}.  Then one can show that, as
  $G(k)$-representations,
  \[\Pi(\rho)^{K(1)} = \pi_G(\tau).\]
\end{lemma}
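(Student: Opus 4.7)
The plan is to reduce to the discrete case and then invoke the explicit description of depth-zero supercuspidal representations and their generalised Steinberg relatives. I will then match the Deligne--Lusztig content against the parametrisation of Section~\ref{sec:dual}.

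First I would reduce to the case that $\tau$ is itself discrete. Choose the Levi $L \subset G$ as in Definition~\ref{def:piGtau} and a factorisation $\rho = \iota \circ \rho_L$ with $\rho_L : W_t \rarrow \hat{L}(\bar{E})$ a discrete parameter. The local Langlands correspondence is compatible with parabolic induction in the following sense: since $\Pi(\rho)$ is generic and $\rho_L$ is discrete, $\Pi(\rho)$ is the (unique generic, and in fact full) normalised parabolic induction $\mathrm{n}\text{-}\Ind_{P(F)}^{G(F)} \Pi(\rho_L)$ for $P$ a parabolic with Levi $L$. On the compact side, the Iwasawa decomposition $G(F) = P(F)\cdot G(\Oc_F)$ and a Mackey computation give
\[\bigl(\mathrm{n}\text{-}\Ind_{P(F)}^{G(F)} \Pi(\rho_L)\bigr)^{K(1)} \cong \Ind_{P(k)}^{G(k)} \bigl(\Pi(\rho_L)^{K_L(1)}\bigr)\]
as $G(k)$-representations, where $K_L(1) = \ker(L(\Oc_F) \rarrow L(k))$. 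Combined with the inductive formula $\pi_G(\tau) = \Ind_{L(k)}^{G(k)}\pi_L(s)$ of Definition~\ref{def:piGtau}, this reduces the lemma to the case $L = G$, i.e.\ $\tau$ discrete.

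Now suppose $\tau$ is discrete. As in the proof of Lemma~\ref{lem:exists-discrete}, one may write (after conjugating) $\tau(\sigma) = J_m([\zeta])$ for a single $q$-power orbit $[\zeta]$ of prime-to-$p$ roots of unity and an integer $m$ with $rm = n$ where $r = |[\zeta]|$. The corresponding extension $\rho$ of $\tau$ to $W_t$ is (up to a tame unramified twist) the parameter of a generalised Steinberg $\mathrm{St}_m(\pi_0)$, where $\pi_0$ is the depth-zero supercuspidal of $GL_r(F)$ attached to the irreducible tame parameter $\rho_0$ arising from the orbit $[\zeta]$. For $m = 1$ the explicit Bushnell--Kutzko/DeBacker--Reeder construction of depth-zero supercuspidals realises $\pi_0$ as $\mathrm{c}\text{-}\Ind$ of an extension across the centre of the inflation to $GL_r(\Oc_F)$ of a cuspidal $G(k)$-representation $\sigma_0$, and one reads off that $\pi_0^{K(1)} \cong \sigma_0$ as $GL_r(k)$-representations. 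For general $m$ one combines this with the known computation of the $K(1)$-invariants of generalised Steinberg representations (their parahoric restriction picks out the generalised Steinberg of $GL_n(k)$ attached to $\sigma_0$ via Harish-Chandra induction from a Levi of the form $GL_r^m \subset GL_n$).

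The final step is to match the cuspidal $\sigma_0$ and the resulting generalised Steinberg against $\pi_G(\tau)$. By Lemma~\ref{lem:dual-geometric-conjugacy}, the tame parameter $\rho_0$ corresponds to a geometric conjugacy class of pairs $(T,\theta)$ in $GL_r$ with $T$ the (unique up to conjugacy) elliptic torus attached to $[\zeta]$; the normalisation \eqref{eq:isom} is precisely the one that, under DeBacker--Reeder's description of the depth-zero LLC for $GL_n$, sends $\rho_0$ to the $\pm R^\theta_T$ whose sign-adjusted sum is the irreducible cuspidal, i.e.\ $\pi_{GL_r}(s)$. The generalised Steinberg case follows by combining this for $m=1$ with the standard identification of the parahoric restriction of $\mathrm{St}_m(\pi_0)$ with the alternating sum defining $\pi_{GL_n}(s)$ over the stabiliser $W(s)$.

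The main obstacle is the bookkeeping at the last step: one must check that the normalisation of the local Langlands correspondence used (the inverse of $\rec_l$ of \cite{MR1876802}) is compatible, orbit by orbit and up to signs, with the parametrisation of pairs $(w,s)$ built in Section~\ref{sec:dual}. Everything else is a reasonably standard assembly of compatibility of LLC with parabolic induction, the Iwasawa/Mackey computation of $K(1)$-invariants, and the explicit structure of depth-zero generic representations; it is the matching of the two sides' conventions for the Deligne--Lusztig parameter that requires care.
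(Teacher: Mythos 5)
Your reduction to the discrete case and your overall skeleton (discrete case $\rightarrow$ depth-zero cuspidal case $\rightarrow$ explicit matching with a Deligne--Lusztig character) coincide with the paper's proof. However, the two steps you defer to ``standard'' or ``known'' facts are precisely where all the content of the lemma lies, and as written they are genuine gaps rather than routine bookkeeping.

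First, you assert ``the standard identification of the parahoric restriction of $\St_m(\pi_0)$ with the alternating sum defining $\pi_{GL_n}(s)$ over the stabiliser $W(s)$.'' This identification is not standard in the form you need it: one must actually prove that the virtual sum $|W(s)|^{-1}\sum_{w\in W(s,s^q)}\epsilon(w)R(w,s)$ is the generic irreducible constituent of the Harish--Chandra induction of the cuspidal $\epsilon(w_0)R_M(w_0,s)$ from the Levi $M(k)=GL_r(k)^{\times m}$. The paper does this by combining \cite{MR0393266} Theorem~10.7 (to see that $\pi_G(s)$ is irreducible and occurs in the Gelfand--Graev representation, hence is nondegenerate) with an explicit inner-product computation via \cite{MR0393266} Theorem~6.8 showing $\pres{\pi_G(s),\epsilon(w_0)R(w_0,s)}=1$, which pins down the cuspidal support; uniqueness of the generic member of a cuspidal-support class then closes the argument. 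On the $p$-adic side the paper correspondingly invokes \cite{shotton-gln} Corollary~6.21 to characterise $\Pi(\rho)^{K(1)}$ as the unique nondegenerate irreducible with cuspidal support $(M(k),\nu^{K(1)\cap M})$; this replaces your appeal to an unproved computation of the full parahoric restriction of a generalised Steinberg. Without one of these two mechanisms your general-$m$ step does not go through.

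Second, the ``bookkeeping'' you flag in the cuspidal case is a genuine obstacle and needs an actual argument, not just care. Your proposed route through the DeBacker--Reeder construction would additionally require proving that their parametrisation of depth-zero supercuspidals agrees with the Harris--Taylor normalisation of $\rec_l$ fixed in the statement (footnote to Lemma~\ref{lem:comparison}); that compatibility is itself a theorem of the same order of difficulty as the step you are trying to carry out. The paper avoids this by writing the discrete parameter as $\Ind_{W'_t}^{W_t}\tilde\chi$ for a character of the Weil group of the degree-$n$ unramified extension, using \cite{MR1876802} Lemma~12.7 to convert this into cyclic automorphic induction, and then reading off from the main theorem and Paragraph~3.4 of \cite{MR1235293} that the $K(1)$-invariants are $(-1)^{n-1}R^\theta_T$ with $\theta$ exactly the character produced by the normalisation~\eqref{eq:isom}. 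Some such explicit identification, signs included, is indispensable; your proposal correctly locates the difficulty but does not supply the argument.
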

\begin{proof}
  We immediately reduce to the case $G = GL_n$.  If $\hat{L}$ and $L$ are as in Definition~\ref{def:piGtau}, and
  $L/\Oc_F$ is a Levi subgroup of $G/\Oc_F$ extending $L/k$, then for any $\rho$ as in the lemma we can conjugate $\rho$
  to have image in $\hat{L}(\bar{E})$.  We then have
  \[ \Pi(\rho) = \Ind_{P(F)}^{G(F)} \Pi_L(\rho)\]
  where $\Pi_L$ is the local Langlands correspondence for $L$ and $P$ is a parabolic subgroup with Levi $L$.  Taking
  $K(1)$-invariants we see that it suffices to prove the lemma in the case that $\tau$ is discrete.

  Let $M/\Oc_F$ be a split Levi subgroup, with dual $\hat{M}$, such that the semisimple part of $\tau$, $\tau_s$,
  factors through a discrete parameter $s : I_t \rarrow \hat{M}(E)$.  Then there is $w_0 \in W_M\subset W$ such that
  $w_0s = s^q$, and associated to the pair $(w_0, s)$ we have a representation $\epsilon(w_0)R_M(w_0, s)$ of $M(k)$
  which will be \emph{cuspidal} by \cite{MR0393266} Theorem~8.3.  We claim that $\pi_G(s)$ is the (unique) nondegenerate
  irreducible representation of $G(k)$ with cuspidal support given by the pair $(M(k), \epsilon(w_0)R(w_0, s))$.  Since
  $\pi_G(s)$ is nondegenerate by Theorem~\ref{thm:gelfand-mult}, it suffices to show that it has the given cuspidal
  support.  If $M \subset P$ is a parabolic subgroup defined over $k$, then
  \[\Ind_{M(k)}^{G(k)} R_M(w_0, s) = R(w_0, s)\] by \cite{MR0393266} Proposition~8.2, where $w_0$ is regarded as an
  element of both $W_M$ and $W$.  We have to show that
  \[\pres{\pi_G(s), \epsilon(w_0)R(w_0, s)} \neq 0.\] But, by \cite{MR0393266} Theorem~6.8, we have
  \begin{align*}\pres{\pi_G(s), \epsilon(w_0)R(w_0, s)}
    &= \frac{\epsilon(w_0)}{|W(s)|}\sum_{w \in W(s,s^q)} \epsilon(w) \pres{R(w,s), R(w_0, s)}\\
    &= \frac{\epsilon(w_0)}{|W(s)|} \sum_{w \in W(s, s^q)}\epsilon(w) |\{x \in W(s) : xwx^{-1} = w_0\}| \\
    &= \frac{\epsilon(w_0)}{|W(s)|} \sum_{x \in W(s)} \epsilon(xw_0x^{-1}) \\
    &= 1
  \end{align*}
  as required.  Now, the semisimplification of $\rho$ has the form $\rho_M$ for some $\rho_M : W_t \rarrow \hat{M}(F)$
  with $\rho_M|_{I_t} = s$.  Then $\Pi(\rho)$ will be a discrete series representation with supercuspidal support
  $(M, \nu)$ for some supercuspidal representation $\nu = \Pi_M(\rho_M)$.  It follows from \cite{shotton-gln}~Corollary
  6.21 parts~(1) and~(2) that $\Pi(\rho)^{K(1)}$ is the unique nondegenerate irreducible representation of $G(k)$ with
  cuspidal support $(M(k), \nu^{K(1) \cap M})$, and we have to show that $\nu^{K(1) \cap M} = \epsilon(w_0)R(w_0,s)$.
  Thus we have reduced to the cuspidal case, which boils down to comparing the construction of \cite{MR2480618} with the
  known local Langlands correspondence for general linear groups.  This is implicit in the remarks following Theorem~1.1
  of \cite{MR2676163}: we spell out the argument.

  We may suppose that $M = GL_n$ and $s : I_t \rarrow \hat{T}(E)$ is a discrete semisimple parameter.  Then
  \[s \cong \chi \oplus \chi^{\phi} \oplus \ldots \oplus \chi^{\phi^{n-1}}\] for some $\chi : I_t \rarrow \hat{T}(E)$,
  where $\chi^\phi$ is the twist of $\chi$ by $\phi \in W_t$, and $w_0 = (12\ldots n) \in W_M \cong S_n$.  Let $W'_t$ be
  the tame Weil group of the unramified extension $F_n/F$ of degree $n$.  Then $\chi$ extends to a character
  $\tilde{\chi}$ of $W'_t$ and $s = \left(\Ind_{W'_t}^{W_t}\tilde{\chi}\right)|_{I_t}$.  By \cite{MR1876802} Lemma~12.7
  part~(6), \[\Pi\left(\Ind_{W'_t}^{W_t}\tilde{\chi}\right) = \Ind_{F_n}^F(\Pi(\tilde{\chi})).\] Here $\Ind_{F_n}^F$
  denotes the cyclic automorphic induction of \cite{MR1328755}, which in this case agrees with the construction of
  \cite{MR1235293}.  We have that $\Pi(\tilde{\chi})|_{\Oc_{F_n}}^\times$ is inflated from the character $\theta$ of
  $k_n^\times$ corresponding to $\chi$ via the canonical surjection $I_t \onto k_n^\times$.  If we take $T \subset M$ to
  be a maximal torus of type $w_0$, then there is an isomorphism $T(k) \cong k_n^\times$.  It follows from the main
  theorem and Paragraph~3.4 of \cite{MR1235293} that $\left(\Ind_{F_n}^F(\Pi(\tilde{\chi}))\right)^{K(1)}$ is, as a
  representation of $K/K(1) = G(k)$, precisely $(-1)^{n-1}R^\theta_T = \epsilon(w_0)R(w_0, s)$, as required.
\end{proof}

\subsection{Endomorphisms of Gelfand--Graev representations}
\label{sec:endomorphisms}

Notice that the $q$-power stable $W$-orbits of $\Hom(I_t, \hat{G}(E))$ are exactly the $E$-points of the affine scheme
$\Sc^{\hat{G}}(q)$ introduced previously.  We write $\BqG$ for its ring of functions.
\begin{corollary} There are canonical isomorphisms
  \[\End_{G(k)}(\Gamma_{G,E}) \cong \prod_{[s]} E \cong \BqG\otimes E\]
  where $[s]$ runs over the $q$-power stable $W$-orbits of $\Hom(I_t, \hat{G}(E))$.
\end{corollary}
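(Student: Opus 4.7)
The plan is to combine Theorem~\ref{thm:gelfand-mult} with Schur's lemma for the first isomorphism, and with the structure of $\Sc^{\hat{G}}(q)$ established in Section~\ref{sec:moduli-semisimple} for the second. Both isomorphisms share the same index set --- the $q$-power stable $W$-orbits $[s]$ in $\Hom(I_t,\hat{T}(E))$ --- so the main content is verifying the two sides separately and checking the parametrisations match.

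For the first isomorphism, I would invoke Theorem~\ref{thm:gelfand-mult}, which gives a multiplicity-free decomposition $\Gamma_{G,E} = \bigoplus_{[s]} \pi_G(s)$ into irreducible $E[G(k)]$-modules. The assumption~\eqref{eq:sufflarge} that $E$ contains the $(q^{n!}-1)$th roots of unity is precisely what guarantees that every Deligne--Lusztig character of $G(k)$ is realised over $E$; hence each $\pi_G(s)$ is absolutely irreducible over $E$, and $\End_{G(k)}(\pi_G(s)) = E$. Schur's lemma together with the non-isomorphism of the summands then yields
\[\End_{G(k)}(\Gamma_{G,E}) \cong \prod_{[s]} \End_{G(k)}(\pi_G(s)) = \prod_{[s]} E.\]

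For the second isomorphism, I would unwind the identification $\Sc^{\hat{G}}(q) \cong (\hat{T}/W)^q = \Spec B_{q,\hat{G}}$ (which depends on a topological generator of $I_t$, though the target ring does not). The second lemma of Section~\ref{sec:moduli-semisimple} shows this scheme is finite flat and reduced over $\Spec\Oc$, so $B_{q,\hat{G}}\otimes E$ is a finite reduced $E$-algebra whose $\bar{E}$-points are in canonical bijection with $W$-orbits of $q$-power stable homomorphisms $I_t \rarrow \hat{T}(\bar{E})$. The assumption~\eqref{eq:sufflarge} again ensures that all these geometric points are already defined over $E$, so evaluation at the $E$-points $[s]$ exhibits an isomorphism $B_{q,\hat{G}}\otimes E \isomto \prod_{[s]} E$. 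Canonicity of the identification of index sets is tautological, since Lemma~\ref{lem:dual-geometric-conjugacy} and Definition~\ref{def:piGtau} both extract the orbit $[s]$ directly from the restriction to $I_t$. I anticipate no genuine obstacle here: the corollary is essentially bookkeeping built on Theorem~\ref{thm:gelfand-mult} and the finiteness statement for $\Sc^{\hat{G}}(q)$ already at hand.
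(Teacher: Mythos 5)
Your argument is correct and matches the paper's proof in essence: the first isomorphism is exactly the product of the projections $\End_{G(k)}(\Gamma_{G,E}) \rarrow \End_E(\pi_G(s)) = E$ (which the paper calls the Curtis homomorphisms), obtained from the multiplicity-free decomposition of Theorem~\ref{thm:gelfand-mult} via Schur's lemma, and the second is the tautological matching of the index set $\{[s]\}$ with the $E$-points of $\Sc^{\hat{G}}(q)$. Your extra remarks on assumption~\eqref{eq:sufflarge} and the finite flat reducedness of $(\hat{T}/W)^q$ just make explicit what the paper leaves implicit.
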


\begin{proof}
  The first isomorphism is the product of the ``Curtis homomorphisms''
  \[\End_{G(k)}(\Gamma_{G,E}) \rarrow \End_E(\pi_{G}(s)) = E.\]  The second takes the copy of $E$ labelled by $[s]$ to the
  copy of $E$ corresponding to the point $s$ of $\Sc_{\hat{G}}(q)$.
\end{proof}

\begin{remark} The problem of determining the \emph{integral} endomorphism ring $\End_{G(k)}(\Gamma_G)$ (for general
  connected reductive groups $G$) was considered by Bonnaf\'{e}--Kessar \cite{MR2441999}, who obtained a description
  when $l \nmid |W|$.  In the case $G = GL_n$, it is in fact true that the map
  $\BqG \rarrow \End_{G(k)}(\Gamma_G) \otimes E$ that we have obtained is an isomorphism of $\BqG$ onto
  $\End_{G(k)}(\Gamma_G)$.  This is proved in \cite{helm2016curtis} and \cite{1610.03277} as a byproduct of their proof
  of the local Langlands correspondence in families.\footnote{See the introduction for further remarks on this.}
\end{remark}

\begin{proposition}\label{prop:curtis} Let $L \subset G$ be a Levi subgroup.  Regard $\Ind_{L(k)}^{G(k)}(\Gamma_{L,E})$ as a module over
  $\BqL$ via the homomorphism
  \[\BqL \rarrow \BqL \otimes E \isomto \End(\Gamma_{L,E}) \rarrow \End\left(\Ind_{L(k)}^{G(k)}(\Gamma_{L,E})\right).\] Then, for
  each $[s] \in \Sc^{\hat{L}}(q)(E)$, we have an isomorphism of $G(k)$-representations
  \[\Ind_{L(k)}^{G(k)}(\Gamma_{L,E}) \otimes_{B_{q, \hat{L}}, [s]} E \cong \pi_G(\tau)\]
  where $\tau : I_t \rarrow \hat{L}(E)$ is a discrete inertial parameter with semisimple part $s$.
  \end{proposition}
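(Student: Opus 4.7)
The plan is to combine the decomposition of $\Gamma_{L,E}$ into generalized Steinberg components from Theorem~\ref{thm:gelfand-mult} with the exactness of parabolic induction, and then match the output against Definition~\ref{def:piGtau}.

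First I would apply Theorem~\ref{thm:gelfand-mult} with $L$ in place of $G$ to write
\[
\Gamma_{L,E} \;\cong\; \bigoplus_{[s']} \pi_L(s'),
\]
where $[s']$ ranges over the $q$-power stable $W_L$-orbits in $\Hom(I_t,\hat T(E))$. The corollary just before the proposition identifies $\BqL\otimes E$ with $\End_{L(k)}(\Gamma_{L,E}) \cong \prod_{[s']} E$ via the product of the Curtis homomorphisms, where the factor labelled by $[s']$ is the projection $\End_{L(k)}(\Gamma_{L,E})\onto \End_{L(k)}(\pi_L(s'))=E$. Consequently the maximal ideal of $\BqL\otimes E$ corresponding to $[s]$ annihilates every $\pi_L(s'')$ with $[s'']\neq[s]$ and acts trivially on $\pi_L(s)$, so tensoring the decomposition above with the residue field of $\BqL$ at the $E$-point $[s]$ gives
\[
\Gamma_{L,E} \otimes_{\BqL,[s]} E \;\cong\; \pi_L(s).
\]

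Next I would note that the $\BqL$-action on $\Ind_{L(k)}^{G(k)}(\Gamma_{L,E})$ is defined by pushing $L(k)$-equivariant endomorphisms through induction, and that induction, being $E[G(k)]\otimes_{E[L(k)]}(-)$, is exact and commutes with all $E$-linear colimits; in particular it commutes with the right exact functor $-\otimes_{\BqL,[s]} E$. Therefore
\[
\Ind_{L(k)}^{G(k)}(\Gamma_{L,E})\otimes_{\BqL,[s]} E \;\cong\; \Ind_{L(k)}^{G(k)}\!\left(\Gamma_{L,E}\otimes_{\BqL,[s]} E\right) \;\cong\; \Ind_{L(k)}^{G(k)} \pi_L(s),
\]
and by Definition~\ref{def:piGtau} the right-hand side is by definition $\pi_G(\tau)$ for any discrete inertial $\hat L$-parameter $\tau$ with semisimple part $s$.

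The only step that really demands care is the identification of the $\BqL$-action on the summands of $\Gamma_{L,E}$: namely, that the decomposition of $\BqL\otimes E$ into $E$-points matches the isotypic decomposition of $\Gamma_{L,E}$ into the $\pi_L(s')$. This is essentially the content of the construction of the Curtis homomorphisms recorded in the corollary immediately preceding the proposition, but one must trace through definitions to confirm that the idempotent of $\BqL\otimes E$ corresponding to $[s']$ is the projector onto $\pi_L(s')$. Once this matching is acknowledged, the remainder of the argument is a formal manipulation with induction and base change.
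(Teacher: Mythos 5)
Your argument is correct and is essentially the paper's own proof, just with the details spelled out: the paper reduces to the case $L=G$ via Definition~\ref{def:piGtau} (your step of commuting $\Ind_{L(k)}^{G(k)}$ with $-\otimes_{\BqL,[s]}E$) and then appeals to the definition of the Curtis isomorphism (your identification of $\Gamma_{L,E}\otimes_{\BqL,[s]}E$ with $\pi_L(s)$ via Theorem~\ref{thm:gelfand-mult} and the preceding corollary). The point you flag as needing care --- that the idempotent of $\BqL\otimes E$ labelled by $[s']$ is the projector onto $\pi_L(s')$ --- is indeed exactly how the Curtis isomorphism is defined, so nothing further is required.
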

\begin{proof}
  By the definition of $\pi_G(\tau)$, this immediately reduces to the case $L = G$, in which case it follows from the
  definition of the isomorphism $\BqG \rarrow \End_{G(k)}(\Gamma_{L,E})$ via Curtis homomorphisms.
\end{proof}

\subsection{Blocks and localisation}
\label{sec:blocks}

Let $\bar{s}$ be an $\FF$-point of $\Sc^{\hat{G}}(q)$, that is, a $q$-power stable semisimple conjugacy class in
$\Hom(I_t, \hat{G}(\FF))$.  Then \cite[Theorem~2.2]{MR983059} implies that the set of isomorphism classes of irreducible
representations that occur in some $R(w,s)$ is a union of blocks for $\Oc[G(k)]$. In particular, there is a central
idempotent $e_{\bar{s}} \in \Oc[G(k)]$ which acts as the identity precisely on these irreducible representations (and as
zero on the others).

Let $B_{q,\Ghat, \bar{s}}$ be the localisation of $\BqG$ at $\bar{s}$, and consider the projective $\Oc[G(k)]$-module
$e_{\bar{s}}\Gamma_G$ (a direct summand of $\Gamma_G$).  Then, again via the product of Curtis homomorphisms, we have a
homomorphism
\[B_{q, \Ghat, \bar{s}} \rarrow \End(e_{\bar{s}}\Gamma_{G,E}).\] Similarly, if $L \subset G$ is a Levi subgroup we have
a map
\[B_{q, \Lhat, \bar{s}} \rarrow \End(\Ind_{L(k)}^{G(k)}e_{\bar{s}}\Gamma_{L,E})\] and we obtain a corresponding version
of Proposition~\ref{prop:curtis}.

\section{The Breuil--M\'{e}zard conjecture}
\label{sec:combining}

If $X$ is any finite-dimensional scheme, let $\Zc(X)$ be the free abelian group on the irreducible components of $X$ of
maximal dimension.  If $X = \Spf A$ for $A \in \Cc_\Oc^\wedge$, then we write $\Zc(X) = \Zc(\Spec(A))$.

Let $G$ and $\hat{G}$ be as in Section~\ref{sec:rep-gln}, and suppose that $(E,\Oc, \FF)$ is sufficiently large in the
sense of assumption~(\ref{eq:sufflarge}).  Define a map \[\cyc : K_E(G(k)) \rarrow \Zc(\Xf^{\Ghat}(q))\] as follows: for
each isomorphism class of inertial $\hat{G}$-parameter $\tau : I_t \rarrow \hat{G}(E)$, there is an irreducible --- in
fact, geometrically irreducible --- component $\Cc_{\tau}$ of $\Xf^{\hat{G}}(q)$ such that $\rho_x|I_t\cong \tau$ for a
Zariski dense (open) set of $x \in \Cc_{\tau}(\bar{E})$.  Then for $\sigma$ an irreducible $E$-representation of $G(k)$
we define
\[ \cyc(\sigma) = \sum_{\tau} m(\sigma,\tau)[\Cc_\tau],\] where
$m(\sigma, \tau) = \dim \Hom_{G(k)}(\pi_G(\tau),\sigma)$, and we extend this linearly to $K_E(G(k))$.

\begin{remark} It follows from Lemma~\ref{lem:comparison} that $\cyc(\sigma) = \cyc'(\sigma^*)$ where $\cyc'$ is the
  cycle map defined in \cite{shotton-gln}~4.2 and $\sigma^*$ is the dual of $\sigma$.  The dual makes no difference to
  the following result.
\end{remark}

There are reduction maps $\red : K_E(G(k)) \rarrow K_{\FF}(G(k))$ and $\red : \Zc(\Xf^{\Ghat}(q)) \rarrow
\Zc(\Xf^{\Ghat}(q)_{\FF})$, the first defined by `choose a lattice, apply $\otimes_\Oc \FF$, and take the image in the
Grothendieck group' and the second defined by intersection with the special fibre, as in \cite{shotton-gln} Section~2.3.

\begin{theorem}\label{thm:tame-BM} There exists a homomorphism $\bar{\cyc} : K_{\FF}(G(k)) \rarrow \Zc(\Xf^{\Ghat}(q)_{\FF})$ such that the
  diagram
  \begin{equation}
\begin{CD}
  \label{eq:BM}
K_E(G(k)) @>{\cyc}>> \Zc(\Xf^{\Ghat}(q)) \\
@V{\red}VV  @V{\red}VV \\
K_{\FF}(G(k)) @>{\bar{\cyc}}>> \Zc(\Xf^{\Ghat}(q)_{\FF}).
\end{CD}
\end{equation}
\end{theorem}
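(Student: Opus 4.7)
The plan is to reduce Theorem~\ref{thm:tame-BM} to a local statement at well-chosen points on each irreducible component of the special fibre and then to compute both sides of the diagram explicitly using the Gelfand--Graev representation.

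\textbf{Reduction to a local statement.} By Lemma~\ref{lem:unique-cpt}, for any $f$ large enough for $\hat{G}$, each irreducible component of $\Xf^{\hat{G}}(q)_\FF$ contains an $f$-distinguished point $\rhobar$ lying on no other component. The coefficient of any cycle in $\Zc(\Xf^{\hat{G}}(q)_\FF)$ on a given component is then detected after completing at the corresponding $\rhobar$. Hence it suffices, for each such $\rhobar$, to produce a local map $\bar{\cyc}_\rhobar\colon K_\FF(G(k))\to \Zc(X^{\hat{G}}_\rhobar\otimes\FF)$ making the completed square commute; the global $\bar{\cyc}$ is then obtained by assembling these, invoking surjectivity of the decomposition map $\red\colon K_E(G(k))\to K_\FF(G(k))$ and choosing a splitting on a complement if needed.

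\textbf{Local geometric input.} Fix $\rhobar$ with allowable Levi $\hat{M}$ and associated $\bar{s}\in \Sc^{\hat{M}}(q)(\FF)$. Theorem~\ref{thm:def-rings} supplies a formally smooth morphism $X^{\hat{G}}_\rhobar\to S^{\hat{M}}_{\bar{s}}=\Spf(B_{q,\hat{M},\bar{s}})$, whose target has Artinian special fibre, so $\Zc(X^{\hat{G}}_\rhobar\otimes\FF)=\ZZ\cdot[\Cc_\rhobar]$ for a unique component $\Cc_\rhobar$. Since $\Sc^{\hat{M}}(q)$ is finite flat reduced over $\Oc$ and assumption~\eqref{eq:sufflarge} forces every $E$-point of $S^{\hat{M}}_{\bar{s}}$ to be $E$-rational, we have $B_{q,\hat{M},\bar{s}}\otimes_\Oc E=\prod_\tau E$ indexed by $E$-rational lifts $\tau$ of $\bar{s}$, and each prime ideal $\mathfrak{p}_\tau\subset B_{q,\hat{M},\bar{s}}$ has quotient of $\Oc$-length one in the special fibre. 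Combined with formal smoothness, this shows that the pullback to $X^{\hat{G}}_\rhobar$ of the component $\Cc_\tau$, reduced modulo $l$, contributes multiplicity one to $[\Cc_\rhobar]$.

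\textbf{Local definition via Gelfand--Graev.} Let $L\subset G$ be a Levi dual to $\hat{M}$ and let $e_{\bar{s}}\in\Oc[L(k)]$ be the central idempotent of Section~\ref{sec:blocks} attached to $\bar{s}$. Set $\Gamma_{\bar{s}}=\Ind_{L(k)}^{G(k)}(e_{\bar{s}}\Gamma_L)$; by Lemma~\ref{lem:projective} and exactness of induction it is a projective $\Oc[G(k)]$-module. Define
\[
  \bar{\cyc}_\rhobar(\pi)=\dim_\FF\Hom_{G(k)}(\Gamma_{\bar{s}}\otimes\FF,\pi)\cdot [\Cc_\rhobar]
\]
for an irreducible $\FF$-representation $\pi$, and extend additively; projectivity of $\Gamma_{\bar{s}}\otimes\FF$ ensures this is well-defined on $K_\FF(G(k))$. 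For commutativity, note that for $\sigma\in K_E(G(k))$, projectivity of $\Gamma_{\bar{s}}$ gives the identity $\dim_E\Hom(\Gamma_{\bar{s},E},\sigma)=\dim_\FF\Hom(\Gamma_{\bar{s},\FF},\red(\sigma))$. The blockwise analogue of Proposition~\ref{prop:curtis} decomposes $\Gamma_{\bar{s},E}=\bigoplus_\tau \pi_G(\tau)$, with $\tau$ running over $E$-rational lifts of $\bar{s}$, so the left side equals $\sum_\tau m(\sigma,\tau)$; by the length-one computation above, the same sum is the coefficient of $[\Cc_\rhobar]$ in $\red(\cyc(\sigma))|_{X^{\hat{G}}_\rhobar}$.

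\textbf{Main obstacle.} The chief technical subtlety is aligning the Gelfand--Graev decomposition with the geometric decomposition of $S^{\hat{M}}_{\bar{s}}$: one must verify carefully that $e_{\bar{s}}$ isolates precisely the block whose constituents correspond to $E$-lifts of $\bar{s}$, and that the indexing of constituents of $\Gamma_{\bar{s},E}$ by discrete $\hat{M}$-inertial parameters (via Proposition~\ref{prop:curtis}) matches the indexing of components $\Cc_\tau$ of $\Xf^{\hat{G}}(q)$ contributing at $\rhobar$ under the composition $\Sc^{\hat{M}}(q)\to \Sc^{\hat{G}}(q)$. Once these identifications are in place, formal smoothness from Theorem~\ref{thm:def-rings} reduces the intersection-theoretic computation of $\red(\cyc(\sigma))|_{X^{\hat{G}}_\rhobar}$ to the transparent computation of lengths in $B_{q,\hat{M},\bar{s}}$, and the two sides of the square match term by term.
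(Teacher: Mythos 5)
Your proposal is correct and follows essentially the same route as the paper: reduction to the completion at an $f$-distinguished point via Lemma~\ref{lem:unique-cpt}, the formally smooth map to $S^{\hat M}_{\bar s}$ from Theorem~\ref{thm:def-rings} to identify the reduction map on cycles, and the definition of $\bar{\cyc}$ via the projective module $\Ind_{L(k)}^{G(k)}e_{\bar s}\Gamma_L$ together with Proposition~\ref{prop:curtis} and the Brou\'e--Michel block decomposition. The only cosmetic difference is your remark about splitting $\red$ on a complement, which is unnecessary since $\bar{\cyc}$ is defined directly on the irreducible $\FF$-representations and extended linearly.
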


\begin{proof}
  Let $f$ be an integer large enough for $\hat{G}$ (see Definition~\ref{def:large}).  By Lemma~2.10 of
  \cite{shotton-gln}, it is enough prove the theorem after enlarging $\Oc$. Then, by \cite[Proposition~7.1]{shotton-gln}
  and Lemma~\ref{lem:unique-cpt}, it suffices to prove the theorem with $\Xf^{\Ghat}(q)$ replaced by
  $X_{\rhobar}^{\hat{G}}$ for $\rhobar$ an $f$-distinguished $\FF$-point of $\Xf^{\Ghat}(q)$.  Let $\rhobar$ be such a
  point and let $\hat{L}$ be an allowable Levi subgroup for $\rhobar$.  By Theorem~\ref{thm:def-rings}, there is a
  formally smooth morphism
  \[X_{\rhobar}^{\hat{G}} \rarrow S_{\bar{s}}^{\hat{L}}.\]
  We have that $S_{\bar{s}}^{\hat{L}} = \Spec B_{q, \Lhat, \bar{s}}$ and that $B_{q, \Lhat, \bar{s}}$ is a finite flat local
  $\Oc$-algebra.

  It follows from this that $\Zc(X_{\rhobar}^{\hat{G}} \otimes \FF) \cong \ZZ$ is generated by the class of the unique
  irreducible component, and $\Zc(\Xf^{\Ghat})$ is the free abelian group on the $E$-points $[s]$ of
  $S_{\bar{s}}^{\hat{L}}$.  With these identifications, by Theorem~\ref{thm:def-rings} the reduction map on the right is simply
  \[\sum a_{[s]} [s] \mapsto \sum a_s,\]
  and we seek a map $\bar{\cyc} : K_{\FF}(G(k)) \rarrow \ZZ$ such that
  \[\bar{\cyc}(\bar{\sigma}) = \sum_{[s]} m(\sigma,\tau_{[s]})\]
  for all $\sigma \in K_E(G(k))$.

  Let $\Theta = \Ind_{L(k)}^{G(k)} e_{\bar{s}}\Gamma_L$.  Then $\Theta$ is a finitely generated projective
  $\Oc[G(k)]$-module by Lemma~\ref{lem:projective}, the fact that $e_{\bar{s}}$ is an idempotent, and the fact that
  $\Ind$ takes projectives to projectives.  If $\Theta_E = \Theta \otimes E$ then we have a homomomorphism
  \[B_{q,\hat{L},\bar{s}} \rarrow \End_{G(k)}(\Theta_E)\]
  from Sections~\ref{sec:endomorphisms} and \ref{sec:blocks}.  For any $\Oc[G(k)]$-representation $\sigma$, define
  $\Theta(\sigma)$ to be $\Hom_{\Oc[G(k)]}(\Theta, \sigma)$, an exact functor of $\sigma$.  I claim that $\bar{\cyc}$
  can be defined by setting
  \[ \bar{\cyc}(\nu) = \dim_\FF \Theta(\nu)\]
  for irreducible representations $\nu$ of $G(k)$ over $\FF$, and extending linearly.  Note that, since $\Theta(\cdot)$
  is exact, if $\omega$ is any representation of $G(k)$ over $\FF$ with image $[\omega]$ in $K_\FF(G(k))$, then
  \[ \bar{\cyc}([\omega]) = \dim_\FF \Theta(\omega).\]

  Indeed, for $\sigma$ an irreducible $E$-representation of $G(k)$ admitting a lattice $\sigma^{\circ}$ we have
  \begin{align*}
    \sum_{[s]} m(\sigma, \tau_{[s]}) &= \sum_{[s]} \dim \Hom_{E[G(k)]}(\Theta_E \otimes_{B_{q,\hat{G}, \bar{s}},
                                       [s]} E, \sigma)  \\
    \intertext{(by Proposition~\ref{prop:curtis} and the discussion of section~\ref{sec:blocks})}
                                     &= \dim_E \Theta(\sigma) \\
                                     &= \rank_\Oc \Theta(\sigma^\circ)\\
                                     &= \dim_\FF\Theta(\bar{\sigma})\\
    \intertext{(by projectivity of $\Theta$)}
                                     &= \bar{\cyc}(\bar{\sigma})
  \end{align*}
  as required.  The theorem follows.
\end{proof}
\bibliography{references.bib}{}
\bibliographystyle{amsalpha}
\end{document}